\newcommand{\ppi}{{\mbox{\boldmath$\pi$}}}
\newcommand{\sfd}{{\sf d}}
\newcommand{\restr}[1]{\lower2pt\hbox{$|_{#1}$}}
\newcommand{\ca}[1]{\overset{\circ}{{#1}}}
\newcommand{\Pe}{{\rm P}}
\newcommand{\Kliminf}{K\kern-3pt-\kern-2pt\mathop{\rm lim\,inf}\limits}  
\newcommand{\Lip}{\mathop{\rm Lip}\nolimits}          
\renewcommand{\d}{{\mathrm d}}
\newcommand{\e}{{\rm{e}}}                          
 \newcommand{\X}{{\rm X}}
 \newcommand{\Y}{{\rm Y}}
  \newcommand{\R}{{\mathbb R}}
\newcommand{\limi}{\varliminf}
\newcommand{\lims}{\varlimsup}
\newcommand{\diam}{{\rm diam}}
\newcommand{\dist}{{\rm dist}}
\newcommand{\lip}{{\rm lip}}
\newcommand{\loc}{{\rm loc}}
\newcommand{\mm}{\mathfrak m}                                
\def\Xint#1{\mathchoice
{\XXint\displaystyle\textstyle{#1}}%
{\XXint\textstyle\scriptstyle{#1}}%
{\XXint\scriptstyle\scriptscriptstyle{#1}}%
{\XXint\scriptscriptstyle\scriptscriptstyle{#1}}%
\!\int}
\def\XXint#1#2#3{{\setbox0=\hbox{$#1{#2#3}{\int}$ }
\vcenter{\hbox{$#2#3$ }}\kern-.6\wd0}}
\def\dashint{\Xint-}
\newtheorem{theorem}{Theorem}[section]
\newtheorem{lemma}[theorem]{Lemma}
\newtheorem{proposition}[theorem]{Proposition}
\theoremstyle{definition}
\newtheorem{remark}[theorem]{Remark}
\newtheorem{definition}[theorem]{Definition}
\newtheorem{example}[theorem]{Example}
\newtheorem{question}[theorem]{Question}
\numberwithin{equation}{section}
\begin{document}

\title[Closed BV-extension and $W^{1,1}$-extension sets]{Closed BV-extension and $W^{1,1}$-extension sets}

\author[Emanuele Caputo]{Emanuele Caputo}

\address[Emanuele Caputo]{Mathematics Institute, Zeeman Building, University of Warwick, Coventry, CV4 7AL, United Kingdom}
\email{emanuele.caputo@warwick.ac.uk}

\author[Jesse Koivu]{Jesse Koivu}
\address[Jesse Koivu]{University of Jyvaskyla \\
         Department of Mathematics and Statistics \\
         P.O. Box 35 (MaD) \\
         FI-40014 University of Jyvaskyla \\
         Finland}
\email{jesse.j.j.koivu@jyu.fi}

\author[Danka Lu\v{c}i\'c]{Danka Lu\v{c}i\'c}
\address[Danka Lu\v{c}i\'c]{University of Jyvaskyla \\
         Department of Mathematics and Statistics \\
         P.O. Box 35 (MaD) \\
         FI-40014 University of Jyvaskyla \\
         Finland}
\email{danka.d.lucic@jyu.fi}

\author[Tapio Rajala]{Tapio Rajala}
\address[Tapio Rajala]{University of Jyvaskyla \\
         Department of Mathematics and Statistics \\
         P.O. Box 35 (MaD) \\
         FI-40014 University of Jyvaskyla \\
         Finland}
\email{tapio.m.rajala@jyu.fi}

\subjclass[2000]{Primary 30L99. Secondary 46E35, 26B30.}
\keywords{}
\date{\today}



\begin{abstract}
This paper studies the relations between extendability of different classes of Sobolev $W^{1,1}$ and $BV$ functions from closed sets in general metric measure spaces. 
Under the assumption that the metric measure space satisfies a weak $(1,1)$-Poincar\'e inequality and measure doubling, we prove further properties for the extension sets. In the case of the Euclidean plane, we show that compact finitely connected $BV$-extension sets are always also $W^{1,1}$-extension sets. This is shown via a local quasiconvexity result for the complement of the extension set.
\end{abstract}

\maketitle

\section{Introduction}

The study of extension properties of sets (with respect to a certain family of functions) in a given ambient space has a long history and importance both in the mathematical theory and its applications. Starting from the fundamental works in the Euclidean ambient space (e.g.\ \cite{Calderon, Shvartsman, Jones}), the study continued over the years in several directions: considering different classes of functions, different assumptions on the sets and on the ambient space. In order to consider several cases of interest all together, the setting of metric measure spaces turned out to be an appropriate choice of the ambient space to work in. The latter is also due to the fact that the theories of different functional spaces, such as \(BV\) or Sobolev spaces, have been well established and thoroughly studied (see \cite{Mir03,Ambrosio-DiMarino14,HKST15,AILP24}).

When the first generalizations  from the Euclidean to a more general metric measure space setting had been done, e.g.\ in \cite{HKT2008b}, some more structural assumptions on the spaces were needed. For instance, the validity of a (local) Poincar\' e inequality and the doubling property of a measure; for some developments in this setting see \cite{Panu}. From the point of view of sets in consideration, many of the extension properties in the latter setting are understood and studied for domains, i.e.\  open connected sets. In the full generality, i.e.\ without additional structural assumptions on a metric measure space, the study of the extension properties for domains (and more in general, for open sets) has been pursued by three of the authors in \cite{CKR23}.
\smallskip

A particular question relevant in the study of extension properties that we will focus our discussion on is the following: \emph{when does the extension property with respect to a certain function space implies the extension property with respect to another function space?} In the present paper, we will address the above question in the case of \emph{closed sets} in general \emph{metric measure spaces}, where the extension properties we consider are relative to \(BV\)- or \(W^{1,1}\)-type function spaces.

To be more precise, let a metric measure space \((\X,\sfd, \mm)\) be given.
Given a Borel set $E \subset \X$, we say that $E$ is a $BV$-extension set if there exist $C\ge 1$ and a (not necessarily linear) map $T \colon BV(E) \to BV(\X)$ such that $T u=u$ on $E$ for every $u \in BV(E)$ and $\|Tu \|_{BV(\X)} \le C\|u \|_{BV(E)}$.
For the precise definition of the $BV$ space on a Borel set $E$ and the notion of an extension set, we refer the reader respectively to Definitions \ref{def:BV_on_borel} and \ref{def:extensions}. 
The same question mentioned above has been investigated in this generality in the case of open sets recently in \cite{CKR23}, generalizing earlier results \cite{KoskelaMirandaShanmugalingam2010,Burago,Baldi} in Euclidean and PI spaces.
Our study is motivated by \cite{KoivuLucicRajala2023}, where three of the authors proved the following general approximation result.

\begin{theorem}\label{thm:extensionsub}
 Let $(\X,\sfd,\mm)$ be a metric measure space. Let $\Omega \subset \X$ be a bounded open set. Then for every $\varepsilon >0$ there exists a closed set $G \subset \Omega$ such that $\mm(\Omega\setminus G) < \varepsilon$ and so that the zero extension gives a bounded operator from $BV(G)$ to $BV(\X)$.
\end{theorem}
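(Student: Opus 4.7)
I would take $G$ to be a super-level set of the distance-to-complement function $\rho(x):=\dist(x,\X\setminus\Omega)$. For $t>0$ the closed set $G_t:=\{\rho\ge t\}$ sits inside $\Omega$, and since $\Omega$ is open, $G_t\uparrow\Omega$ as $t\downarrow 0$; in particular $\mm(\Omega\setminus G_t)\to 0$, so the measure requirement $\mm(\Omega\setminus G)<\varepsilon$ is met by choosing $t$ small. Because $\rho$ is $1$-Lipschitz and supported in the bounded (finite-measure) set $\overline\Omega$, one has $\rho\in BV(\X)$ with $|D\rho|(\X)\le \mm(\Omega)$, and the BV coarea formula yields
\[
\int_0^\infty P(\{\rho>t\},\X)\,dt \;=\; |D\rho|(\X) \;<\;\infty,
\]
so $P(G_t,\X)<\infty$ for $\mathcal L^1$-a.e.\ $t>0$. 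I choose $t_0>0$ with both $\mm(\Omega\setminus G_{t_0})<\varepsilon$ and $P(G_{t_0},\X)<\infty$, and set $G:=G_{t_0}$.

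To show that the zero extension is bounded from $BV(G)$ to $BV(\X)$, given $u\in BV(G)$ with zero extension $\tilde u$, the plan is to construct a Lipschitz approximation of $\tilde u$ on $\X$ with a controlled slope integral. Starting from locally Lipschitz approximations $(u_n)$ of $u$ on a neighborhood of $G$ (as in Definition~\ref{def:BV_on_borel}), I multiply by the Lipschitz cutoff $\psi_s(x):=\min\!\bigl(1,((\rho(x)-s)/(t_0-s))_+\bigr)$ with $s<t_0$; the product $\psi_s u_n$, extended by zero to $\X\setminus G_s$, is globally Lipschitz on $\X$ and converges to $\tilde u$ in $L^1(\X)$ along a suitable diagonal in $(n,s)$. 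The Leibniz-type slope inequality yields
\[
\int_\X\lip(\psi_s u_n)\,d\mm \;\le\;\int_{G_s}\lip(u_n)\,d\mm\;+\;\frac{1}{t_0-s}\int_{G_s\setminus G}|u_n|\,d\mm,
\]
and passing to the liminf together with the $L^1$-lower semicontinuity of the BV norm would give the required estimate, provided both terms on the right are controlled by $\|u\|_{BV(G)}$.

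The main obstacle is the ``collar term'' $(t_0-s)^{-1}\int_{G_s\setminus G}|u_n|\,d\mm$: since $u$ lives on $G$ only, the values of $u_n$ on the collar $G_s\setminus G$ are \emph{a priori} not controlled by $\|u\|_{L^1(G)}$. I would resolve this via an averaging (Fubini) argument over $s$, combined with a careful choice of the extension of $u_n$ off $G$ (for instance, a McShane-type extension adjusted to decay away from $G$, or a truncation at the level of its boundary values). Using the BV coarea formula once more --- applied jointly to $\rho$ and to a suitable cutoff of $|u_n|$ --- one can identify a value of $s$ (or an average thereof) for which the collar term is dominated by $C\|u\|_{BV(G)}$, with $C$ depending only on $t_0$ and $\mm(\Omega)$, not on $u$ or $n$. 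Ensuring that the choice of $G=G_{t_0}$ can be made independently of $u$ --- i.e.\ that a positive-measure set of admissible $t_0$ exists before fixing $u$ --- is the technical crux of the argument.
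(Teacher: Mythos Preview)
This theorem is not proved in the present paper; it is quoted from \cite{KoivuLucicRajala2023}. The paper itself tells you the method used there: the set $G$ is obtained by \emph{minimizing a functional involving perimeter, penalized by a term measuring the distance from $\Omega$}. The point is that the minimizer inherits regularity from the variational problem (via comparison arguments), and it is this regularity that forces the zero extension to be bounded. Your approach via distance super-level sets $G_t=\{\rho\ge t\}$ is a genuinely different and more elementary route, but the proposal as it stands has a real gap.

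The gap is precisely where you locate it, and your resolution is not convincing. There are in fact two uncontrolled terms in your Leibniz inequality, not one. You write $\int_{G_s}\lip(u_n)\,d\mm$ for the first term, but you only control $\int_{G}\lip_a(u_n)\,d\mm$; the contribution over the collar $G_s\setminus G$ is just as unbounded as the second term. The approximants $u_n\in\Lip(\X)$ furnished by \eqref{eq:from_B_to_X} are constrained only by their behavior \emph{on} $G$: their slopes and values on $G_s\setminus G$ can be arbitrarily bad, and neither a McShane extension nor a truncation cures this, since the global Lipschitz constant of $u_n$ is not controlled by $\|u\|_{BV(G)}$. For the same reason your diagonal sequence $\psi_{s}u_n$ need not converge to $\tilde u$ in $L^1_{\loc}(\X)$ on the shrinking collar. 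The averaging-over-$s$ idea would, at best, produce for each $u$ a good level $s=s(u)$, but the set $G$ has already been fixed as $G_{t_0}$; you need a single constant working for every $u$, and nothing in the sketch explains why a $u$-independent choice of $t_0$ can absorb both collar contributions. In the absence of doubling or a Poincar\'e inequality there is no trace-type mechanism available to close this, which is exactly why the cited proof resorts to a variational construction of $G$ rather than a naive sublevel set.
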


The reason why $G$ in Theorem \ref{thm:extensionsub} is closed is due to the proof method in \cite{KoivuLucicRajala2023}. There the set $G$ was obtained via minimization of a functional involving perimeter,
penalized by a term measuring the distance from $\Omega$. The minimizers of the above functional have measure zero boundary in nice metric measure spaces, and so in these spaces they have an open representative. However, this open representative need not be a \(BV\)-extension set (see \cite[Example 4.1]{KoivuLucicRajala2023}).
As we will see in Proposition \ref{prop_closedrepr}, it is generally true that if a Borel set $A$ is an extension set and if it has a closed representative, then this representative is also an extension set. This observation leads us to a natural question: does the closed representative have better (extension) properties than the original set? A more explicit question that we will work towards in this paper is the following.

\begin{question}\label{q:main}
Let $E \subset \X$ be a closed $BV$-extension set. Under what assumptions on $E$ and on the metric measure space $(\X,\sfd,\mm)$ is $E$ also a $W^{1,1}$-extension set?
\end{question}

For a closed set $E \subset \X$ let us consider the following claims; we refer the reader to Section \ref{sec:preliminaries} for the relevant definitions:
\begin{itemize}
    \item [(\( BV \))] \(E\) has the \(BV\)-extension property.
    \item[(s-$BV$)] $E$ has the strong $BV$-extension property.
    \item[($W^{1,1}$)] $E$ has the $W^{1,1}$-extension property.
    \item[($W_w^{1,1}$)] $E$ has the $W_w^{1,1}$-extension property.
\end{itemize}
In Proposition \ref{prop:general_implications} we prove the following 
implications:
\[
\begin{array}{ccc}
( \text{s-}BV ) & \Longrightarrow & ( W^{1,1} ) \\
\Downarrow && \Downarrow\\
(W_w^{1,1}) &\Longrightarrow & ( BV )
\end{array}
\]
Coming back to the problem studied in this paper, Question \ref{q:main} asks when a partial converse to Proposition \ref{prop:general_implications} holds. First of all, we provide an example (see Example \ref{ex:BVnotW11}) showing the existence of a closed \(BV\)-extension set, which is not a \(W^{1,1}\)- nor a \(W^{1,1}_w\)-extension set. Thus Question \ref{q:main} is indeed meaningful, and one has to identify some further properties of the set or of the space in order to get a positive answer. 
On the one hand, the space in the Example \ref{ex:BVnotW11} does not satisfy the PI-assumption. On the other hand, we are able to answer Question \ref{q:main} positively in the Euclidean planar case, for closed sets with finitely many connected components in the complement. Thus, this leaves an open question for further investigations in this direction:

\begin{question}
Are closed $BV$-extension sets in $\mathbb R^n$, with $n \ge 2$, $W^{1,1}$-extension sets?
If so, is the same true also in PI spaces?
\end{question}

After the study of the relations between extendability properties with respect to different classes of functions in the full generality in Section \ref{sec:general}, in order to pass from the Example \ref{ex:BVnotW11} to a positive result about the problem \(BV\)-extension implies \(W^{1,1}\)-extension, we obtain several result in the setting of PI spaces that are of independent interest; we present them next.

\subsection*{Geometry of $BV$- and $\ca{BV}$-extension sets in PI spaces}

A preliminary analysis to tackle Question \ref{q:main} is to study geometric and analytic properties of $\ca{BV}$- and $BV$-extension sets (the \(\ca{BV}\) standing for the homogeneous \(BV\) space), under the additional assumption that the metric measure space is doubling and satisfies a $(1,1)$-Poincar\'{e} inequality.

We first prove that closed $BV$-extension sets satisfies the measure density condition. Namely, there exists a constant $C > 0$ so that for all $x \in E$ and $0 < r < {\rm diam}(E)$ we have
\[
\mm(B(x,r)\cap E) \ge C \mm(B(x,r)).
\]
See Proposition \ref{prop:measdens} for the precise statement and the proof. The measure density fails if $\X$ is not a PI space (see Example \ref{ex:sierpinski}).

Second, we relate $\ca{BV}$-extension sets to $BV$-extension sets. We prove that in PI spaces bounded $\ca{BV}$-extension sets are $BV$-extension sets (Proposition \ref{prop:homBVtofullBV}).
The result does not hold if we drop the PI-assumption, as Example \ref{example:earring} shows.

Then, we analyze decomposability properties of closed $BV$- and $\ca{BV}$- extension sets. We prove that closed $\ca{BV}$-extension sets are indecomposable (no PI-assumption is needed here), while this is not the case for $BV$-extension sets as simple examples show.
However, in the case of compact $BV$-extension sets we show (see Lemma \ref{lma:connected_full}) that there exists a finite decomposition of $E$ into measurable sets $\{E_i\}_{i=1}^m$ with $m \in \mathbb{N}$, such that
\[\mm(E)=\sum_{i=1}^m \mm(E_i)\qquad\text{ and }\qquad
\Pe_E(E_i,E)=0\, \text{ for every }i=1, \ldots,m.
\]
and $E_i$ are indecomposable in $E$. See Definition \ref{def:decomposability} for the definition of decomposability.
\smallskip

We conclude this introduction commenting on the proof strategy for the positive result we achieve in the Euclidean planar case.

\subsection*{$BV$-extension sets in the Euclidean plane}

Our main theorem states the following. 
\begin{theorem}\label{thm:main}
 Let $E \subset \mathbb R^2$ be a compact $BV$-extension set. If $\mathbb R^2 \setminus E$ has finitely many connected components, then $E$ is a $W^{1,1}$-extension set.
\end{theorem}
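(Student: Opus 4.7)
\emph{Proof proposal.} The strategy follows the hint in the abstract: first establish a local quasiconvexity result for each connected component of $\mathbb R^2 \setminus E$, then use this geometric regularity to modify a $BV$-extension into a $W^{1,1}$-extension. Since $E$ is compact, write $\mathbb R^2 \setminus E = U_0 \sqcup U_1 \sqcup \cdots \sqcup U_k$ with $U_0$ the unbounded component and $U_1,\dots,U_k$ the bounded ``holes.'' By Proposition \ref{prop:measdens} (recall that $\mathbb R^2$ is a PI space), $E$ already satisfies an interior measure density condition, which will be a key tool. We may furthermore reduce to a large ball containing $E$ so that effectively only finitely many bounded complementary pieces must be treated; the unbounded one is cut off by a standard cutoff.

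The central geometric step is to show that each complementary component $U_i$ is (locally) quasiconvex in $\mathbb R^2$: there exists a constant $C$ such that any two points $x,y\in U_i$ can be joined by a curve in $U_i$ of length at most $C|x-y|$. I would argue by contradiction: if quasiconvexity failed, one could find pairs of points in some $U_i$ arbitrarily close in Euclidean distance but with very large intrinsic distance in $U_i$. In the plane this forces $E$ to contain a long thin ``peninsula'' separating two definite pieces of $E$ lying near $x$ and near $y$. One can then select a test set $A\subset E$, namely one of those two pieces (cut off at a suitable scale), whose characteristic function $\chi_A$ satisfies $\Pe_E(A)\le \varepsilon$ because the only boundary of $A$ inside $E$ lies at the tip of the thin peninsula. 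Any $BV$-extension of $\chi_A$ to $\mathbb R^2$ must transition from value $1$ on $A$ to value $0$ on $E\setminus A$, and by the coarea formula the total variation of the extension in $U_i$ is bounded below by a positive constant depending on the geometry of the peninsula but independent of $\varepsilon$. Letting the peninsulas become thinner contradicts the boundedness of the $BV$-extension operator. Here the assumption that $\mathbb R^2 \setminus E$ has only finitely many connected components is used to rule out accumulation phenomena where the peninsula could be ``short-circuited'' through a different complementary component.

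With local quasiconvexity at hand, given $u\in W^{1,1}(E)$ we apply the $BV$-extension to get $\tilde u\in BV(\mathbb R^2)$ with $\tilde u|_E=u$; the problem is that $|D\tilde u|$ might have a singular part on $\overline{\mathbb R^2\setminus E}$. We replace $\tilde u$ on each $U_i$ by a Whitney-type extension $v_i$: take a Whitney decomposition of $U_i$ and define $v_i$ on each Whitney cube $Q$ as an average of $u$ over a piece of $E$ of comparable size situated near $Q$ (the existence of such pieces comes from measure density of $E$). On adjacent Whitney cubes $Q,Q'$, the quasiconvexity of $U_i$ provides a short curve in $\mathbb R^2$ connecting the associated averaging regions, and a chaining argument together with the $W^{1,1}$-norm of $u$ on the swept region estimates $|v_i(Q)-v_i(Q')|$. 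This yields $v_i\in W^{1,1}(U_i)$ with norm controlled by $\|u\|_{W^{1,1}(E)}$ plus a constant times $\|u\|_{W^{1,1}(E)}$, and with the trace of $v_i$ on $\partial U_i\cap E$ agreeing with the trace of $u$ (by standard Lebesgue differentiation arguments combined with measure density). Setting $Tu=u$ on $E$ and $Tu=v_i$ on $U_i$ produces a function in $W^{1,1}(\mathbb R^2)$; the matching-traces property ensures that no singular contribution appears across $\partial E$.

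The main obstacle I expect is the quasiconvexity step: isolating a clean planar topological picture of ``thin peninsula versus two separated pieces of $E$'' from a mere failure of quasiconvexity, and extracting from this the right test set with controlled $BV(E)$-norm and an uncontrollable $BV(\mathbb R^2)$-norm for its extension, requires a careful interplay between the planar topology (crucially using the finitely-many-components hypothesis), the measure density of $E$, and the coarea formula. Once this geometric input is in place, the Whitney construction in the last step is routine by PI-space standards.
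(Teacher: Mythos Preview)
Your quasiconvexity claim is too strong. The paper explicitly notes (just before Lemma~\ref{lma:quasiconvexity_full}) that for a closed $BV$-extension set the complementary components need \emph{not} be quasiconvex; only the weaker conclusions of Lemma~\ref{lma:quasiconvexity_full} hold, namely $\sfd_\Omega(x,y)\le C+\|x-y\|$ globally and $\sfd_\Omega(x,y)\le C\|x-y\|$ only when $\sfd_\Omega(x,y)<\delta$. A simple counterexample is two unit disks joined by a thin corridor: this is a compact $BV$-extension set with finitely many complementary components, yet points of $U_0$ on opposite sides of the corridor are Euclidean-close but have large $\sfd_{U_0}$-distance. Your ``thin peninsula'' contradiction argument cannot succeed here because such a $\chi_A$ has $\|
\chi_A\|_{BV(E)}$ comparable to $\mathcal L^2(A)$, not to the small perimeter, and a bounded extension exists.

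This infects the Whitney step fatally. Adjacent Whitney cubes $Q,Q'\subset U_i$ can have reflected averaging regions $Q^*,(Q')^*\subset E$ that are Euclidean-close but lie on opposite sides of a thin corridor, hence far apart in any intrinsic geometry of $E$. Your claim that ``quasiconvexity of $U_i$ provides a short curve in $\R^2$ connecting the associated averaging regions'' along which you can integrate $|\nabla u|$ is not justified: a short curve through $U_i$ carries no information about $u$, and a short curve through $E$ need not exist. Controlling $|v_i(Q)-v_i(Q')|$ by a local piece of $\|u\|_{W^{1,1}(E)}$ would require a Poincar\'e inequality on $E$ near $\partial E$, which is essentially the $W^{1,1}$-extension property you are trying to prove.

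The paper takes a genuinely different route that avoids this circularity. It keeps the $BV$-extension $\tilde u$ and modifies its \emph{level sets} inside each $\Omega_i$: via the Jordan-curve decomposition of planar finite-perimeter sets (Theorem~\ref{thm:planardecomposition}) and Lemma~\ref{lma:quasi_int}, the local quasiconvexity of Lemma~\ref{lma:quasiconvexity_full} is used to push the Jordan boundary curves off $\partial\Omega_i$ into $\Omega_i$ with controlled length increase (Lemma~\ref{lem:kicking}). Reassembling via coarea gives $\bar u\in BV(\R^2)$ with $|D\bar u|(\partial\Omega_i)=0$ and $|D\bar u|\restr{\Omega_i}\ll\mathcal L^2$ (Proposition~\ref{prop:kicking_for_BV}); since there are finitely many $\Omega_i$ one has $\partial E=\bigcup_i\partial\Omega_i$, and Lemma~\ref{lemma:fromlocal_to_global_W11} finishes. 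The finiteness hypothesis enters only at this last gluing, not in the geometric step.
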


The proof of this result is tailored for the Euclidean plane and combines some well-known and some new analytic and geometric results related to $BV$-extension sets and to sets of finite perimeter in $\mathbb{R}^2$. 
The geometry of the essential boundary of sets of finite perimeter in Euclidean spaces has been studied by 
 Ambrosio et al. in \cite{ALC01}. In particular, by \cite[Corollary 1]{ALC01} we have that the essential boundary of a set of finite perimeter in the plane can be written as a countable union of Jordan loops.

 Given a Jordan domain $\Omega$, we can modify a curve whose image is in $\overline{\Omega}$ to a curve whose image, beside the endpoints, is in $\Omega$. This can be done by increasing the length of the curve in a controlled way. This was proved by Garcia-Bravo and the fourth-named author in \cite[Lemma 5.3]{BR21}, employing the quasiconvexity property of the connected components of the complement of a \(BV\)-extension domain in consideration. The latter property fails in the case of closed \(BV\)-extension sets.

To rely on the strategy from \cite{BR21}, we study geometric and analytic properties of the complement of $BV$-extension closed sets in the plane, having the following result.
    \begin{proposition}[Lemma {\ref{lma:quasiconvexity_full}}]
    If $E \subset \mathbb R^2$ is a compact $BV$-extension set, then 
    there exist constants $\delta,C>0$ so that the following four properties hold for all connected components $\Omega$ of $\mathbb{R}^2 \setminus E$ and all $x,y \in \overline{\Omega}$.
    \begin{enumerate}
     \item $\sfd_{\Omega}(x,y) \le C + \|x-y\|$.
     \item If $\sfd_{\Omega}(x,y)< \delta$, then $\sfd_{\Omega}(x,y) \le C\|x-y\|$.
    \item $(\partial \Omega,\sfd_\Omega)$ is totally bounded. 
    \item For $x \in \Omega$ and $0 < r < \delta/2$ the closed ball $\overline{B}_{\sfd_\Omega}(x,r)$ is a closed subset of $\mathbb R^2$.
    \end{enumerate}
    \end{proposition}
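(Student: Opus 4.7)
The crux of the proposition will be the local quasiconvexity statement (2); once (2) is established, the remaining items (1), (3), (4) will follow from the compactness of $E$, the fact that $E$ has finite perimeter, and the description of $\partial^* E$ as a countable union of Jordan loops from \cite[Corollary 1]{ALC01}. The plan is to attack (2) first, by an argument along the lines of \cite[Lemma 5.3]{BR21} adapted from the domain setting to the closed-set setting via the measure density of $E$ from Proposition \ref{prop:measdens}, and then to derive the other three items as corollaries of (2) plus compactness.

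\textbf{The hard step (2).} I will argue by contradiction. If (2) fails for every $C,\delta > 0$, then there are sequences $x_n, y_n \in \overline{\Omega}$ with $\|x_n - y_n\| \to 0$ and $\sfd_\Omega(x_n, y_n)/\|x_n - y_n\| \to \infty$; after extraction, $x_n, y_n \to z \in E$. Intuitively, in arbitrarily small Euclidean balls around $z$, the open set $\Omega$ must have two distinct branches that are Euclidean-close but intrinsically far. At the appropriate scale $r_n \sim \|x_n - y_n\|$ I will use the Jordan-loop decomposition of $\partial^* E$ to identify a rectifiable arc of $\partial^* E$ separating the two branches of $\Omega$ inside $B(z, r_n)$, and I will construct a $BV$ test function on $E$ taking the value $1$ on the portion of $E$ bordering one branch of $\Omega$ and $0$ on the portion bordering the other, suitably cut off. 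The measure density from Proposition \ref{prop:measdens} will give an upper bound $\|u\|_{BV(E)} = O(r_n)$. On the other hand, any $BV$-extension $Tu$ must transition from $1$ to $0$ across each branch inside $B(z, r_n)$, and the Euclidean Poincar\'e inequality (applied to each branch of $\Omega \cap B(z, r_n)$) forces total perimeter $\gtrsim \sfd_\Omega(x_n, y_n)$ rather than just $\gtrsim r_n$. Rescaling, the ratio $\|Tu\|_{BV(\mathbb R^2)}/\|u\|_{BV(E)}$ blows up, contradicting the $BV$-extension property.

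\textbf{Deriving (1), (3), (4) from (2).} For (1) I split cases. If $\Omega$ is the unbounded component, I pick $R$ with $E \subset B(0,R)$; then $\mathbb R^2 \setminus B(0, 2R) \subset \Omega$, so any two points outside $B(0, 2R)$ can be connected by a path in $\Omega$ of length at most $\|x - y\| + 4\pi R$, while Euclidean-compactness of $\overline{\Omega} \cap \overline{B}(0, 2R)$ combined with (2) and a finite cover argument bounds the intrinsic distance from any such point to $\partial B(0, 2R)$. If $\Omega$ is bounded, its boundary is a finite union of Jordan loops of $\partial^* E$ with total finite $\mathcal H^1$ length (since $E$ has finite perimeter), so traveling along the boundary gives a uniform bound on the intrinsic diameter of $\overline{\Omega}$. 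For (3), the set $\partial \Omega \subseteq E$ is Euclidean compact; given $\varepsilon > 0$ I cover it by Euclidean balls of radius $\min(\delta/2, \varepsilon/(2C))$, each of $\sfd_\Omega$-diameter at most $\varepsilon$ by (2), and a finite subcover gives total boundedness of $(\partial \Omega, \sfd_\Omega)$. For (4), if $z_n \in \overline{B}_{\sfd_\Omega}(x, r)$ with $z_n \to z$ in $\mathbb R^2$, then $z \in \overline{\Omega}$ since $\overline{\Omega}$ is Euclidean-closed; for $n$ large $\|z_n - z\| < \delta$ so (2) yields $\sfd_\Omega(z_n, z) \le C\|z_n - z\| \to 0$, and hence $\sfd_\Omega(x, z) \le r$.

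\textbf{Main obstacle.} The principal difficulty is the construction and scaling argument for (2). The issue is to identify, at each scale $r_n$, the right Jordan loop of $\partial^* E$ separating the two branches of $\Omega$, and to obtain both the upper bound on $\|u\|_{BV(E)}$ and a lower bound on $\|Tu\|_{BV(\mathbb R^2)}$ with constants independent of $n$. Measure density of $E$, finite perimeter, and the planar Jordan-loop structure must be combined carefully, in particular to handle the case where $z$ is a topologically wild point of $\partial^* E$ around which many Jordan loops accumulate. Items (1), (3), (4) are essentially soft consequences once (2) has been proven.
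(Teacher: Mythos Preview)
Your derivations of (3), (4), and parts of (1) from (2) misread the hypothesis of (2). Item (2) says: \emph{if} $\sfd_\Omega(x,y)<\delta$ \emph{then} $\sfd_\Omega(x,y)\le C\|x-y\|$. You repeatedly apply it under the hypothesis $\|x-y\|<\delta$, which is not what (2) gives. For instance, in your proof of (3) you cover $\partial\Omega$ by small Euclidean balls and assert that each has small $\sfd_\Omega$-diameter ``by (2)''; but two boundary points can be Euclidean-close while $\sfd_\Omega$-far (think of the tips of a topologist's comb), and then (2) is vacuous. The paper explicitly flags this subtlety and handles it by a genuine bootstrapping: first prove (1) and (2) only for points of $\Omega$, then use (1) and (2) together in a graph/net argument to show $(\Omega\cap B(0,R),\sfd_\Omega)$ is totally bounded, then use that total boundedness to push (1) and (2) to $\overline\Omega$, and only then deduce (3) and (4). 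Your ``soft consequences'' paragraph skips this loop entirely.

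Your contradiction strategy for (2) also has a real gap at its core. You claim that for the extension $Tu$, the Euclidean Poincar\'e inequality on each branch of $\Omega\cap B(z,r_n)$ forces $|D(Tu)|(\mathbb R^2)\gtrsim \sfd_\Omega(x_n,y_n)$. But $Tu$ is unconstrained on $\Omega$; it only has to match $u$ on $E$. No mechanism is given that ties the variation of $Tu$ to the \emph{intrinsic} length of a detour in $\Omega$, and Poincar\'e on a branch of $\Omega$ yields a lower bound at the Euclidean scale $r_n$, not at the scale $\sfd_\Omega(x_n,y_n)$. Separately, your upper bound $\|u\|_{BV(E)}=O(r_n)$ presumes you can make $u$ jump from $0$ to $1$ on $E\cap B(z,r_n)$ with $|Du|_E$ of order $r_n$; this requires $E\cap B(z,r_n)$ to split into two pieces separated (inside $E$) by a set of small perimeter, which you have not established. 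The paper avoids all of this by arguing constructively: take an almost-minimizing curve $\gamma\subset\Omega$ from $x$ to $y$, form the Jordan loops $\gamma_i\cup[\gamma_i(0),\gamma_i(1)]$ with the segment $[x,y]$, apply the \emph{perimeter} extension property to $F_i=A_i\cap E$, and use $\partial\tilde F_i$ together with Lemma~\ref{lma:quasi_int} to shortcut $\gamma$. Summing, one gets $\sfd_\Omega(x,y)\le (1+C_{\rm Per})\|x-y\|+C_{\rm Per}\sum_i\mathcal L^2(F_i)$; for (1) the area term is bounded by $\mathcal L^2(E)$, while for (2) the hypothesis $\sfd_\Omega(x,y)<\delta$ forces $\sum_i\mathcal L^2(F_i)\le \pi\ell(\gamma)^2$, which is then absorbed. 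This curve-shortening via the perimeter extension is the idea your proposal is missing.
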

    
    The distance $\sfd_\Omega$, restricted to $\Omega$, is the length distance on $(\Omega,\sfd)$. For the definitions, see Section \ref{sec:preliminaries} for the precise definitions. Property (1) states that $(\bar{\Omega}, \sfd)$ is quasi-isometric to $(\bar{\Omega}, \sfd_\Omega)$, while property (2) states a local (with respect to $\sfd_\Omega$) quasiconvexity of $\Omega$. 

 We can use the previous step to modify a set of finite perimeter in every connected component $\Omega$ of the complement of a compact $BV$-extension set (see Lemma \ref{lem:kicking}). The modification is done in such a way the set does not change outside $\bar{\Omega}$, the perimeter increases in a controlled way and the perimeter of the modified set on $\partial \Omega$ is zero.

 Finally, the proof of Theorem \ref{thm:main} is structured as follows. Given a $BV$-extension set, we can extend a function $u\in W^{1,1}(E)$ to $\tilde{u} \in BV(\mathbb{R}^2)$. We modify this function in a controlled way in every connected component $\Omega_i$ of the complement of $E$. The modification is done in such a way that the new function is in $W^{1,1}(\Omega_i)$ and does not jump over the boundary.

 To do this, we need as a preliminary reduction during the proof to replace $E$ with the closure of points of density one $\overline{E^1}$. The $\overline{E^1}$ is a closed $BV$-extension set, if $E$ is and the connected components $\{\Omega_i^1\}$ of the complement have small intersection in a measure-theoretic sense, namely

 \begin{equation*}
    \partial \Omega_i^1 \cap \partial \Omega_j^1 \text{ is finite for }i \neq j.
 \end{equation*}
 See Proposition \ref{lma:small_intersect} for this result.

\section*{Acknowledgments}

The first named author is supported by the European Union’s Horizon 2020 research and innovation programme (Grant
agreement No. 948021). The third named author is supported by the Research Council of Finland, grant no.\ 362689.

\section{Preliminaries}
\label{sec:preliminaries}
\paragraph{\bf General notation}
In this paper, $(\X,\sfd,\mm)$ is a metric measure space, i.e.\ $(\X,\sfd)$ is a complete and separable metric space and $\mm$ is a nonnegative Borel measure that is finite on bounded sets.
\smallskip

We denote by $\mathscr{B}(\X)$ the Borel subsets of $\X$. We denote by ${\rm Lip}(\X)$, ${\rm Lip}_{\rm loc}(\X)$ respectively the space of Lipschitz functions and locally Lipschitz functions from $\X$ to $\mathbb{R}$. We denote by $C([0,1],\X)$ and ${\rm Lip}([0,1],\X)$ respectively the space of continuous and Lipschitz curves with values in $\X$. We endow $C([0,1],\X)$ with the supremum norm, that makes it a complete metric space. Given two metric spaces $(\X,\sfd_\X)$ and $(\Y,\sfd_\Y)$ and a Lipschitz function $u \colon \X \to \Y$ we denote by ${\rm Lip}(u)$ its Lipschitz constant.
Given a function $u \colon \X \to \Y$, we define the local Lipschitz constant $\lip\, u \colon \X \to \R$ as
\begin{equation*}
    \lip \,u(x):=\begin{cases}
    \lims_{y \to x} \frac{\sfd_\Y(u(y),u(x))}{\sfd_\X(x,y)},&\text{if }x\text{ is an accumulation point,}\\
    0,&\text{otherwise.}
    \end{cases}
\end{equation*}
Moreover, given $x\in \X$ and $r >0$, we define ${\rm Lip}(u,B(x,r)):=\sup_{ z\neq w,\, z,w \in B(x,r)} \frac{\sfd_\Y(u(z),u(w))}{\sfd_\X(z,w)}$, with the convention ${\rm sup}\,\emptyset=0$. The asymptotic Lipschitz constant $\lip_a u \colon \X \to [0,\infty]$ is defined as $\lip_a u(x):=\lim_{r\to 0} {\rm Lip}(u,B(x,r))=\inf_{r\to 0} {\rm Lip}(u,B(x,r))$ for $x\in \X$.
\smallskip

We sometimes denote the image of a curve $\gamma \colon [a,b] \to \X$ by $\gamma$ instead of $\gamma([a,b])$. We denote the length of a curve $\gamma$ by $\ell(\gamma)$. Given $x,y \in \R^n$, we use the notation $[x,y]$ for the line-segment from $x$ to $y$, that is, a curve $[0,1] \to \R^n \colon t \mapsto (1-t)x + ty$.
We use the following notation for internal distance.
Let $O \subset \X$ be open. Then 
we define the function $\sfd_O \colon \X \times \X \to [0,\infty]$ as
\[
\sfd_O(x,y) = \inf\left\{\ell(\gamma)\,:\, \gamma \text{ is a curve joining }x \text{ and }y\text{ in }O\cup\{x,y\}\right\}\quad \text{ for every }x,y\in \X,
\]
with the convention that $\sfd_O(x,y)=+\infty$ if the infimum is taken over the empty set. The function $\sfd_O\colon\X \times \X \to \mathbb{R}^2$ does not satisfy in general the axioms of a distance. For instance, in the case of $O\subset \mathbb{R}^2$, defined as $O:=B((0,0),1)\cup B((2,0),1)$, the function $\sfd_O$ does not satisfy the triangular inequality. However, using that $\sfd(x,y)\le\sfd_O(x,y)$, we have that $\sfd_O \colon O\times O \to [0,\infty]$ satisfies the axioms of a distance taking possibly infinite values. If $O$ is rectifiable and path-connected, we have that $\sfd_O$ is a distance taking finite values. Given $x \in \X$ and $r>0$, we set
\begin{equation*}
\begin{aligned}
    & B_{\sfd_O}(x,r) :=\{y \in \X : \sfd_O(x,y) < r\},\\
    & \overline{B}_{\sfd_O}(x,r):=\{y \in \X : \sfd_O(x,y) \le r\}.\\
\end{aligned}
\end{equation*}
As simple examples show, in general $B_{\sfd_O}(x,r)$ is neither open, nor closed.

Given a set \(E\subset \X\) we denote by \(E^0\) and \(E^1\) the set of points where $E$ has respectively density \(0\) and \(1\) (with respect to the measure \(\mm\)), namely
\begin{equation*}
    E^0:=\left\{ x\in \X: \, \lims_{r \to 0} \frac{\mm(B(x,r) \cap E)}{\mm(B(x,r))} =0 \right\},\quad E^1:=\left\{ x\in \X: \, \limi_{r \to 0} \frac{\mm(B(x,r) \cap E)}{\mm(B(x,r))} =1 \right\}.
\end{equation*}
We also recall that the measure-theoretic boundary of a set \(E\subset \X\) is defined as
\[
\partial^M E\coloneqq \left\{x\in \X:\, \lims_{r\to 0}\frac{\mm(E\cap B(x,r))}{\mm(B(x,r))}>0\, \text{ and }\, \lims_{r\to 0}\frac{\mm((\X\setminus E)\cap B(x,r))}{\mm(B(x,r))}>0\right\}.
\]

We denote by $L^0(\mm)$ the space of $\mm$-measurable functions $f\colon \X \to \R$ up to the quotient given by $\mm$-a.e.\ equality. For every \(p\in [1,\infty)\) we denote by \(L^p(\mm)\) the subspace of \(L^0(\mm)\) consisting of \(p\)-integrable elements of \(L^0(\mm)\), and \(L^\infty(\mm)\) the space consisting of all essentially-bounded ones. The notation \(L^p_{\rm loc}(\mm)\) stands for the set of locally \(p\)-integrable functions. Whenever the  measure on the space is understood from the context, we might also use the notation \(L^p(E)\) instead of \(L^p(\mm\restr{E})\), for \(E\subset\X\) (and similarly for the space of locally integrable functions).

\smallskip

\paragraph{\bf Spaces \(BV\) and \(W^{1,1}\)} 
\begin{definition}[Total variation]
Let $(\X,\sfd,\mm)$ be a metric measure space. Consider $u \in L^1_{\rm loc}(\X)$. Given an open set $A \subset \X$, we define
\begin{equation*}
    |D u|(A):= \inf \left\{ \limi_n \int_A \lip\, u_n\, \d \mm:\, u_n \in {\rm Lip}_{\rm loc}(A),\, u_n \to u \in L^1_{\rm loc}(\mm \restr{A}) \right\}. 
\end{equation*}
\end{definition}
We extend $|D u|$ to all Borel sets $B \in \mathscr{B}(\X)$ by setting
\[ |D u|(B):= \inf \left\{ |D u|(A), B \subset A, A \text{ is an open set}\right\}. \]
With this construction, $|D u| \colon \mathscr{B}(\X) \to [0,\infty)$ is a Borel measure, called the \emph{total variation measure} of $u$ (\cite[Thm.\ 3.4]{Mir03}).
It follows from the definition of total variation that, given an open set $A \subset \X$, if
$u_n \to u$ in $L^1_{\textrm{loc}}(A)$ then $|D u|(A) \le \limi_{n \to \infty} |D u_n|(A)$.
\smallskip

Given a $B \in \mathscr{B}(\X)$ and $u \in L^1_{\textrm{loc}}(B)$, we denote by 
\[|D u|_B \coloneqq \text{ the total variation of $u$ computed in the metric measure space $(\X,\sfd,\mm\restr{B})$.}
\]
If the set \(B\) is closed, then from \cite[Corollary  2.5]{KoivuLucicRajala2023}, we have that 
\begin{equation}
\label{eq:from_B_to_X}
|D u|_B(B) = \inf \left\{ \liminf_{n\to \infty} \int_B \lip_a (u_n)\, \d \mm:\, u_n \in {\rm LIP}(\X),\, u_n \to u \in L^1_{\rm loc}(\mm \restr{B}) \right\} = |D u|_B(\X). 
\end{equation}
\begin{definition}[The spaces $\ca{BV}(B)$ and $BV(B)$]\label{def:BV_on_borel}
Let $(\X,\sfd,\mm)$ be a metric measure space. Let $B \subset \X$ be Borel. 
We define
\[
\begin{split}
\ca{BV}(B):=&\big\{ u\in L^1_{\loc}(\mm\restr{B}):\,|D u|_B(B)<+\infty \big\},\\
BV(B):=&\big\{ u \in L^1(\mm\restr{B}):\,|D u|_B(B)<+\infty\big\}.
\end{split}
\]
We endow the space \(\ca{BV}(B)\) 
with the seminorm
 and the space 
\(BV(B)\) with the norm given by 
\[\|u\|_{\ca{BV}(B)}:= |D u|_B(B)\quad\text{and}\quad
\|u\|_{BV(B)}:=\|u\|_{L^1(\mm\restr{B})}+|D u|_B(B),
\]
respectively.
\end{definition}
Observe, in particular, that for a closed set \(E\subset\X\) it holds \(BV((E,\sfd\restr{E\times E}, \mm\restr{E}))=BV(\X,\sfd, \mm\restr{E})\).
We recall the following notion from \cite[Definition 2.11]{CKR23}.

\begin{definition}[Sets of finite perimeter on a Borel subset B]
    Let $(\X,\sfd,\mm)$ be a metric measure space and let $B\subset \X$ be Borel. We say that $E \in \mathscr{B}(B)$ has finite perimeter on $B$ if $\Pe_{B}(E) <\infty$, where $\Pe_{B}(E) := |D \chi_E|_B(B)$.
    Moreover, we define for every Borel set $F$, $\Pe_B(E,F):= |D \chi_E|_B(B \cap F)$.
\end{definition}

Next, we recall the definitions of subsets of \(\ca{BV}(\X)\) or \(BV(\X)\) that will be relevant throughout. 

\begin{definition}
 Let \((\X,\sfd, \mm)\) be a metric measure space. We define 
 \[
 \begin{split}
L^{1,1}_w(\X)\coloneqq \{u\in \ca{BV}(\X):\, |Du|\ll \mm\},\\
W^{1,1}_w(\X)\coloneqq \{u\in BV(\X):\, |Du|\ll \mm\}.
 \end{split}
 \]
 We endow these spaces with the seminorm \(\|\cdot\|_{L^{1,1}_w(\X)}\) and the norm \(\|\cdot\|_{W^{1,1}_w(\X)}\) given respectively by
 \[\|u\|_{L^{1,1}_w(\X)}:=\left\|\frac{\d |D u|}{\d \mm} \right\|_{L^1(\mm)}\quad \text{ and }\quad \| u\|_{W^{1,1}_w(\X)}:=\| u\|_{L^1(\mm)}+\| u \|_{L^{1,1}_w(\X)}.\]
\end{definition}
In the case of a closed set \(E\subset \X\), we define the spaces 
 \[L^{1,1}_w(E)\coloneqq L^{1,1}_w(E,\sfd \restr{E\times E}, \mm\restr{E})\quad \text{ and }\quad  W^{1,1}_w(E)\coloneqq W^{1,1}_w(E,\sfd\restr{E\times E}, \mm\restr{E}).\] 
There are several possible definitions of $W^{1,1}$ (and homogenous versions) in metric measure spaces, 
see for instance \cite{Ambrosio-DiMarino14}. 
Our main working definition is the following one, defined in terms of a curvewise approach. 
This approach has been recently shown to be equivalent to several others (among which the Newtonian-Sobolev space \(N^{1,1}(\X)\), or the approach via approximations by Lipschitz functions leading to the space \(H^{1,1}(\X)\)); see \cite{AILP24}.
To introduce it, we recall the notion of an $\infty$-test plan. 
We define the evaluation map at time $t \in [0,1]$ as the map $\e_t\colon C([0,1],\X) \to \X$ defined as $\e_t (\gamma):=\gamma_t$. It follows from the definition that $\e_t$ is $1$-Lipschitz (hence Borel).
Given a Borel set \(B\subseteq \X\), we say that $\ppi \in \mathscr{P}(C([0,1],\X))$ is an $\infty$-test plan on \(B\) if
\begin{itemize}
    \item $\ppi$ is concentrated on ${\rm Lip}([0,1],B)$ and $(\gamma \mapsto {\rm Lip}(\gamma))\in L^\infty(\ppi)$;
    \item there exists $C=C(\ppi)$ such that ${\e_t}_*\ppi \le C \mm$ for every $t \in [0,1]$.
\end{itemize}


\begin{definition}[The spaces $L^{1,1}(\X)$ and $W^{1,1}(\X)$]\label{def:W11}
Let \((\X,\sfd, \mm)\) be a metric measure space. Given $u \in L^0(\mm)$, we say that $u \in L^{1,1}(\X)$ if there exists $G \in L^1(\mm)$ such that, for every $\infty$-test plan $\ppi$
\begin{equation}\label{eq:1_wug}
|u(\gamma_1) - u(\gamma_0) | \le \int_0^1 G(\gamma_t)|\dot{\gamma_t}|\,\d t\qquad\text{for }\ppi\text{-a.e. }\gamma.
\end{equation}
In this case, we call $G$ a $1$-weak upper gradient of $u$. The minimal (in the $\mm$-a.e.\ sense) $1$-weak upper gradient is denoted by $|Du|_{1,\X}$ (its existence can be proved by arguing as in \cite[Section 2.2]{CKR23}). We endow $L^{1,1}(\X)$ with the seminorm $\|u\|_{L^{1,1}}\coloneqq\||D u|_{1,\X}\|_{L^1(\mm)}$.
We define the Sobolev space $W^{1,1}(\X, \sfd, \mm)$ as $W^{1,1}(\X, \sfd, \mm):=L^1(\mm)\cap L^{1,1}(\X)$.
The norm in $W^{1,1}(\X, \sfd,\mm)$ is then given by
\[
    \| u \|_{W^{1,1}(\X,\sfd,\mm)}:= \| u \|_{L^{1}(\mm)}+ \| |D u|_{1,\X} \|_{L^{1}(\mm)}.
\]
\end{definition} 
%
As before, it is easy to extend such definitions to a closed set $E \subset \X$. Indeed, in this case, we define 
$L^{1,1}(E)\coloneqq L^{1,1}(E,\sfd\restr{E\times E}, \mm\restr{E})$ and $W^{1,1}(E)\coloneqq W^{1,1}(E,\sfd\restr{E\times E}, \mm\restr{E})$.
%
%
\smallskip

In the case of an open subset \(\Omega\subset \X\) we consider the following definition.

\begin{definition}[The space \(W^{1,1}(\Omega)\)]
Let \((\X,\sfd,\mm)\) be a metric measure space and let \(\Omega\subset \X\) be an open set. The Sobolev space \(W^{1,1}(\Omega)\) is defined as the space of those functions \(u\in L^1(\mm)\) for which the 1-weak upper gradient inequality \eqref{eq:1_wug} holds for every \(\infty\)-test plan \(\ppi\) on \(\Omega\). The corresponding minimal 1-weak upper gradient of \(u\in W^{1,1}(\Omega)\) is denoted by \(|Du|_{1,\Omega}\).
\end{definition}
One of the main 
tools developed in \cite{CKR23} is the existence of a smoothing operator on an open set from $BV$ to ${\rm Lip}_{\rm{loc}}$, that we will use in this work.

\begin{proposition}[Smoothing operator {\cite[Proposition 4.1]{CKR23}}]
\label{prop:smoothing}
Let \((\X, \sfd, \mm)\) be a metric measure space.
Let $\Omega \subset \X$ be open. There exists a constant $C$ such that the following holds: for every $\varepsilon >0$, there exists $T_\varepsilon \colon BV(\Omega) \to {\rm Lip}_{\rm{loc}}(\Omega)$ such that for every \(u\in BV(\Omega)\) it holds that 
\begin{equation}
\label{eq:smoothing_normbounds}
\|T_\varepsilon u\|_{L^1(\Omega)} \le \|u\|_{L^1(\Omega)} + \varepsilon,\quad
\int_{\Omega} \lip\,T_\varepsilon u\,\d \mm \le C(|D u|(\Omega) +\varepsilon),
\end{equation}
$T_\varepsilon u-u \in BV(\X)$ (when defined to be $0$ in $\X \setminus \Omega$) and
\begin{equation}
\label{eq:notchargingtheboundary}
    |D(T_\varepsilon u - u )|(\partial \Omega) = 0.
\end{equation}
\end{proposition}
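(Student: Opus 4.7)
The plan is a Whitney-type smoothing. Fix a small $c_0 > 0$ and choose a covering $\{B_i\}$ of $\Omega$ by balls $B_i = B(x_i, r_i)$, $r_i = c_0\,\sfd(x_i, \partial\Omega)$, such that the dilates $\{2B_i\}$ still lie in $\Omega$ and have uniformly bounded overlap $N$ (by a Vitali-type argument, valid in any separable metric space without needing doubling). Attach a Lipschitz partition of unity $\{\varphi_i\}$ subordinate to $\{2B_i\}$ with $\sum_i \varphi_i = \chi_\Omega$ and $\Lip(\varphi_i) \le C/r_i$. For each $i$ and a tolerance $\eta_i > 0$ to be fixed, use the infimum definition of $|Du|(2B_i)$ to pick $u_i \in \Lip_\loc(2B_i)$ with $\|u_i - u\|_{L^1(2B_i)} \le \eta_i$ and $\int_{2B_i} \lip u_i\, d\mm \le |Du|(2B_i) + \eta_i$. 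I would then set
\[
T_\varepsilon u := \sum_i \varphi_i\, u_i,
\]
which is locally Lipschitz on $\Omega$ by local finiteness of the cover.

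The $L^1$-bound in \eqref{eq:smoothing_normbounds} follows immediately from $|T_\varepsilon u - u| \le \sum_i \varphi_i |u_i - u|$ upon arranging $\sum_i \eta_i \le \varepsilon$. Using $\sum_i \varphi_i(y) = \sum_i \varphi_i(x) = 1$, the telescoping identity
\[
T_\varepsilon u(y) - T_\varepsilon u(x) = \sum_i [\varphi_i(y) - \varphi_i(x)](u_i(y) - u(x)) + \sum_i \varphi_i(x)(u_i(y) - u_i(x))
\]
yields the pointwise estimate $\lip(T_\varepsilon u)(x) \le \sum_i \lip(\varphi_i)(x)\,|u_i(x) - u(x)| + \sum_i \varphi_i(x)\,\lip(u_i)(x)$ at every Lebesgue point of $u$. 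Integrating and using $\Lip(\varphi_i) \le C/r_i$, bounded overlap, and the choices above, this gives
\[
\int_\Omega \lip(T_\varepsilon u)\,d\mm \le C N\, |Du|(\Omega) + C\sum_i \eta_i\bigl(1 + r_i^{-1}\bigr),
\]
which is at most $C(|Du|(\Omega) + \varepsilon)$ once the tolerances are refined to $\eta_i \le \varepsilon\, 2^{-i}\min(1, r_i)$. This yields the second inequality in \eqref{eq:smoothing_normbounds}.

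The main obstacle, and where the heart of the construction lies, is \eqref{eq:notchargingtheboundary}: proving that $v := T_\varepsilon u - u$, extended by $0$ outside $\Omega$, satisfies $|Dv|(\partial\Omega) = 0$. The strategy is to show $|Dv|(U) \to 0$ as the open neighborhood $U \supset \partial\Omega$ shrinks to $\partial\Omega$. The crucial point is that the estimates above are \emph{local} in the Whitney index: only indices $i$ with $2B_i \cap U \neq \emptyset$ contribute to $\int_{U\cap\Omega}\lip(T_\varepsilon u)\,d\mm$, and as $U \searrow \partial\Omega$ every such $i$ has $r_i \to 0$, so the contributing part of $|Du|(\Omega)$ vanishes and the summable refinement $\eta_i \le \varepsilon 2^{-i} r_i$ guarantees that the corresponding error terms vanish as well. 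To convert this localized $\lip$-integral bound into a bound on $|Dv|(U)$, I would approximate $v$ on $U$ by functions of the form $(T_\varepsilon u - u_n)\cdot \chi_U^{{\rm Lip}}$, where $u_n$ are Lipschitz approximations of $u$ on $U \cap \Omega$ (again furnished by the definition of $|Du|(U\cap \Omega)$) and $\chi_U^{{\rm Lip}}$ is a Lipschitz cutoff vanishing outside $U$. The lower semicontinuity of the total variation with respect to $L^1_\loc$-convergence then transfers the vanishing of the $\lip$-integrals into the vanishing of $|Dv|(U)$, and taking $U \searrow \partial\Omega$ delivers \eqref{eq:notchargingtheboundary}. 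The delicate point, and the real technical content, is the summable Whitney-localized choice of $\eta_i$, which is what makes the near-boundary contributions controllable.
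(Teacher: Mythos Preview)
The paper does not prove this proposition; it is quoted verbatim from \cite[Proposition 4.1]{CKR23} and used as a tool. So there is no in-paper argument to compare against, and I assess your sketch on its own.

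Your overall architecture---Whitney-type partition of unity, local Lipschitz approximants $u_i$ chosen from the definition of $|Du|(2B_i)$, the telescoping estimate for $\lip T_\varepsilon u$, and the summable choice $\eta_i\le \varepsilon\,2^{-i}\min(1,r_i)$ driving the boundary estimate---is the right one and matches the construction in \cite{CKR23}. In particular, that choice of $\eta_i$ is exactly what makes $\int_\Omega |T_\varepsilon u-u|/\sfd(\cdot,\partial\Omega)\,\d\mm$ finite, which is the real mechanism behind \eqref{eq:notchargingtheboundary}.

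There is, however, a genuine gap. Your assertion that the dilates $\{2B_i\}$ have \emph{uniformly bounded overlap} ``by a Vitali-type argument, valid in any separable metric space without needing doubling'' is false. Bounded overlap of Whitney balls is a packing statement: the balls $2B_i$ containing a fixed $x$ all have centres in $B(x,C\sfd(x,\partial\Omega))$ and are $c\,\sfd(x,\partial\Omega)$-separated, so their number is bounded only under a doubling hypothesis. Without it the key step $\sum_i |Du|(2B_i)\le N|Du|(\Omega)$ fails, and your bound on $\int_\Omega\lip T_\varepsilon u\,\d\mm$ collapses. The standard remedy, and what is actually used in the non-doubling setting, is to replace the ball cover by a \emph{layer} decomposition: set $V_k=\{x\in\Omega:2^{-k-2}<\sfd(x,\partial\Omega)<2^{-k+1}\}$, take a Lipschitz partition of unity $\{\psi_k\}$ subordinate to $\{V_k\}$ with $\Lip(\psi_k)\le C2^{k}$, and choose one approximant $u_k\in\Lip_{\rm loc}(V_k)$ per layer. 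The $V_k$ overlap at most three at a time by construction, so $\sum_k|Du|(V_k)\le 3|Du|(\Omega)$ with no doubling needed, and every other line of your argument (including the cutoff/diagonal scheme for \eqref{eq:notchargingtheboundary}) goes through with $2^{-k}$ playing the role of $r_i$.
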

An important property of such a smoothing operator is \eqref{eq:notchargingtheboundary}, which states that the difference $(T_\varepsilon u-u)\chi_\Omega$ (as a function defined on $\X$) does not have total variation on the topological boundary.
Another lemma we used in \cite{CKR23} gives a sufficient condition on how to glue a Sobolev function on an open set and another one defined on the complement to get a globally defined one.
\begin{lemma}[{\cite[Lemma 4.4]{CKR23}}]
\label{lemma:fromlocal_to_global_W11}
Let \((\X,\sfd,\mm)\) be a metric measure space and \(\Omega\subset\X\) be a nonempty open set.
Let further \(u\in BV(\X)\) be such that 
\(\chi_\Omega u= u_1\) for some \(u_1\in W^{1,1}(\Omega)\) and \(\chi_{\X\setminus \bar \Omega}u=u_2\) some \(u_2\in W^{1,1}(\X\setminus \bar\Omega)\). Assume that \(|Du|(\partial \Omega)=0\).
Then \(u\in W^{1,1}(\X)\) and it holds that 
\[
|Du|_{1,\X}\leq \chi_{\Omega}|Du_1|_{1,\Omega}+\chi_{\X\setminus \bar\Omega} |Du_2|_{1,\X\setminus \bar\Omega}\quad \mm\text{-a.e.\ in }\X.
\] 
\end{lemma}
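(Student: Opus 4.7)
The plan is to set
\[
G\coloneqq \chi_\Omega |Du_1|_{1,\Omega}+\chi_{\X\setminus\bar\Omega}|Du_2|_{1,\X\setminus\bar\Omega},
\]
which lies in \(L^1(\mm)\) since both summands do, and then to prove that \(G\) is a \(1\)-weak upper gradient of \(u\). Minimality of \(|Du|_{1,\X}\) would then deliver both \(u\in W^{1,1}(\X)\) and the stated pointwise bound \(|Du|_{1,\X}\le \chi_\Omega |Du_1|_{1,\Omega}+\chi_{\X\setminus\bar\Omega}|Du_2|_{1,\X\setminus\bar\Omega}\).

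Given an \(\infty\)-test plan \(\ppi\) on \(\X\), the main tool would be a \emph{restrict-and-rescale} construction. For rationals \(q<q'\) in \([0,1]\), the sets
\[
E^\Omega_{q,q'}\coloneqq\{\gamma:\gamma([q,q'])\subset \Omega\}, \qquad E^c_{q,q'}\coloneqq\{\gamma:\gamma([q,q'])\subset\X\setminus\bar\Omega\}
\]
are Borel. Pushing \(\ppi\restr{E^\Omega_{q,q'}}\) forward via restriction of \(\gamma\) to \([q,q']\) followed by affine reparametrization to \([0,1]\) should yield an \(\infty\)-test plan on \(\Omega\): the essential Lipschitz bound is preserved, and the marginal bound \((\e_s)_\ast\ppi\le C\mm\) passes to the rescaled plan. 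I would then apply the \(1\)-weak upper gradient inequality for \(u_1\in W^{1,1}(\Omega)\), do the analogous step on \(\X\setminus\bar\Omega\), and take a countable union over rational pairs to obtain a single \(\ppi\)-null set \(N\) outside of which
\[
|u(\gamma_{q'})-u(\gamma_q)|\le \int_q^{q'}G(\gamma_r)|\dot\gamma_r|\,\d r
\]
holds for every rational \(q<q'\) with \(\gamma([q,q'])\subset\Omega\) or \(\gamma([q,q'])\subset\X\setminus\bar\Omega\).

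For \(\gamma\notin N\), the open set \(\gamma^{-1}(\Omega\cup(\X\setminus\bar\Omega))\) decomposes as a countable disjoint union of maximal open intervals \((\alpha_n,\beta_n)\), each mapped entirely into one of the two components. Working with the canonical quasi-continuous representatives of \(u_1\) and \(u_2\) on their respective domains (available by their Sobolev regularity), I expect the rational inequalities to pass to the limit on each \((\alpha_n,\beta_n)\), yielding interior continuity of \(u\circ\gamma\) together with the desired local bound. What remains is to telescope across the closed complementary set \(C(\gamma)\coloneqq\gamma^{-1}(\partial\Omega)\).

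The hard part will be precisely this last step, and it is where the hypothesis \(|Du|(\partial\Omega)=0\) becomes essential. The plan is a Fubini-type argument based on \((\e_t)_\ast\ppi\le C\mm\): the aggregate jump of \(u\circ\gamma\) across \(C(\gamma)\), integrated against \(\ppi\), should be controlled by a constant multiple of \(|Du|(\partial\Omega)=0\), so that for \(\ppi\)-a.e.\ \(\gamma\) the map \(u\circ\gamma\) is continuous across \(C(\gamma)\). Telescoping the local estimates over the intervals \((\alpha_n,\beta_n)\) would then deliver the global inequality \(|u(\gamma_1)-u(\gamma_0)|\le \int_0^1 G(\gamma_r)|\dot\gamma_r|\,\d r\) for \(\ppi\)-a.e.\ \(\gamma\), showing that \(G\) is a \(1\)-weak upper gradient for \(u\) and closing the argument. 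An alternative route, if the direct Fubini step proves too delicate, would be first to observe that the hypothesis gives \(|Du|\ll\mm\) with density \(\le G\) (so that \(u\in W^{1,1}_w(\X)\)) and then to invoke the \(W^{1,1}_w=W^{1,1}\) equivalence results cited from \cite{AILP24}.
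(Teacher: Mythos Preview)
The paper does not prove this lemma; it is quoted verbatim from \cite[Lemma~4.4]{CKR23} and used as a black box. So there is no in-paper proof to compare against.

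As for your plan itself: the restrict-and-rescale step and the telescoping over maximal subintervals of \(\gamma^{-1}(\Omega)\cup\gamma^{-1}(\X\setminus\bar\Omega)\) are the right moves, and they do yield the local inequality on each such interval. The genuine difficulty is exactly where you locate it: showing that for \(\ppi\)-a.e.\ \(\gamma\) the composition \(u\circ\gamma\) has no jump across \(C(\gamma)=\gamma^{-1}(\partial\Omega)\). Your Fubini sketch is too vague here. What one actually needs is a curvewise disintegration of the total variation: for \(u\in BV(\X)\) and any \(\infty\)-test plan \(\ppi\), the function \(u\circ\gamma\) is BV on \([0,1]\) for \(\ppi\)-a.e.\ \(\gamma\), and the total variation of \(u\circ\gamma\) restricted to \(C(\gamma)\), integrated against \(\ppi\), is controlled by \(|Du|(\partial\Omega)\). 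This is a nontrivial ingredient (essentially a master-test-plan or superposition argument for BV), and it is precisely what the proof in \cite{CKR23} supplies; without it the telescoping cannot close.

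Your alternative route has a gap: the identity \(W^{1,1}_w(\X)=W^{1,1}(\X)\) is not one of the equivalences established in \cite{AILP24} (which concerns \(N^{1,1}\), \(H^{1,1}\), and the test-plan \(W^{1,1}\)). Indeed, the present paper treats \(W^{1,1}_w\) and \(W^{1,1}\) as genuinely distinct notions in general (see the discussion around Proposition~\ref{prop:11_11w_relation} and Question~\ref{q:11_11w}), and knowing only \(|Du|\ll\mm\) with density \(\le G\) does not by itself put \(u\) into the test-plan Sobolev space. So this shortcut is not available, and you must carry out the curvewise argument directly.
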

\begin{remark}\label{rmk:lem44}
We remark that whenever in the above lemma we have that 
\(\X\setminus\bar\Omega=\emptyset\)
and \(\chi_\Omega u=u_1\) for some \(u_1\in W^{1,1}(\Omega)\) with \(|Du|(\partial \Omega)=0\), then \(u\in W^{1,1}(\X)\) and 
\[
|Du|_{1,\X} =\chi_{\Omega}|Du|_{1,\X}\leq \chi_{\Omega}|Du_1|_{1,\Omega}\quad \mm\text{-a.e.\ in }\X.
\]
\end{remark}

\subsection{PI spaces}
We say that $\mm$ is doubling, if there exists a constant $C_\mm>0$ such that for every $x \in \X$ and $r>0$
we have
\[
0 < \mm(B(x,2r)) \le C_\mm \mm(B(x,r)) < \infty.
\]

Recall \cite[Lemma 14.6]{HK2000} that the doubling assumption on $\mm$ implies the existence of a constant $C>0$ so that we have
\begin{equation}\label{eq:s_doubling}
\frac{\mm(B(x,r))}{\mm(B(x_0,R))} \ge C\left(\frac{r}{R}\right)^s,\quad \text{ with } s \coloneqq \log_2 C_\mm,
\end{equation}
for every $x_0\in \X$, $x \in B(x_0,R)$ and $0 < r \le R$. Without loss of generality, we may assume that $s >1$ in \eqref{eq:s_doubling}.

We say that a metric measure space $(\X,\sfd,\mm)$ satisfies the $(1,1)$-Poincar\'e inequality, if there exist constants $C>0$ and $\lambda \ge 1$ such that for every $u \in {\rm Lip}_{\rm loc}(\X)$, every $x \in \X$ and $r>0$ we have
\[
 \dashint_{B(x,r)}|u-u_{B(x,r)}| \,\d\mm\le Cr\dashint_{B(x,\lambda r)}\lip\,u\,\d\mm,
\]
where $\dashint$ denotes the average integral and $u_A$ the average of $u$ in a set $A$.

\begin{remark}
By the definition of $|D u|$ for $u \in BV(\X)$ in \cite{Mir03}, we have that $(\X,\sfd,\mm)$ satisfies a $(1,1)$-Poincar\'{e} inequality if and only if
there exist constants $C>0$ and $\lambda \ge 1$ such that for every $u \in BV(\X)$, every $x \in \X$ and $r>0$ we have
    \begin{equation}
    \label{eq:poincare_BV_rem4.1}
        \dashint_{B(x,r)}|u-u_{B(x,r)}| \,\d\mm\le Cr\frac{|Du|(B(x,\lambda r))}{\mm(B(x,\lambda r))}.    
    \end{equation}
\end{remark}

Whenever in this note we say that $(\X,\sfd,\mm)$ is a PI space, we mean that \((\X,\sfd, \mm)\) is a metric measure space with the measure $\mm$ doubling and the space satisfies the $(1,1)$-Poincar\'e inequality.
From \cite[Theorem 5.1]{HK2000} we have the following.
\begin{proposition}\label{prop:Sobo}
Let $(\X,\sfd,\mm)$ be a PI space. Then there exist constants $C>0$ and \(\lambda\geq 1\) so that, being \(s\) as in \eqref{eq:s_doubling}, for every $u \in {\rm Lip}_{\rm loc}(\X)$, $x \in \X$, $r>0$, and $t>0$ we have
\[
\frac{\mm(\{z \in B(x,r)\,:\,|u(z) - u_{B(x,r)}|>t\})t^{\frac{s}{s-1}}}{\mm(B(x,r))} \le C\left(r\dashint_{B(x,5\lambda r)}\lip_a\,u\,\d\mm\right)^{\frac{s}{s-1}}.
\]
Moreover, there exists a constant $C>0$ such that for every $u \in BV(\X)$, $x\in \X$, $r>0$, and $t>0$
\[
\frac{\mm(\{z \in B(x,r)\,:\,|u(z) - u_{B(x,r)}|>t\})t^{\frac{s}{s-1}}}{\mm(B(x,r))} \le C\left(r \frac{|D u|(B(x,5\lambda r))}{\mm(B(x,5\lambda r))}\right)^{\frac{s}{s-1}}.
\]
\end{proposition}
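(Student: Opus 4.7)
I would prove the Lipschitz statement first by a standard telescoping/chaining argument combined with the reverse doubling bound \eqref{eq:s_doubling}, and then reduce the $BV$ case to it by approximation using the definition of $|Du|$. The starting point for the Lipschitz estimate is the pointwise bound one gets by iterating the $(1,1)$-Poincar\'{e} inequality along dyadic balls centred at a Lebesgue point.

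\textbf{Step 1: pointwise chain estimate.} Fix $u\in {\rm Lip}_{\rm loc}(\X)$, $x\in \X$, $r>0$, and let $z \in B(x,r)$ be a Lebesgue point of $u$; by doubling, $\mm$-a.e.\ $z$ qualifies. Set $B_k:=B(z,2^{-k}r)$ for $k\geq 0$ and write
\[
u(z)-u_{B(x,r)}=\bigl(u_{B_0}-u_{B(x,r)}\bigr)+\sum_{k=0}^{\infty}\bigl(u_{B_{k+1}}-u_{B_k}\bigr).
\]
Applying \eqref{eq:poincare_BV_rem4.1} (in its Lipschitz version with $\lip_a u$) to each consecutive pair $B_{k+1}\subset B_k$, together with doubling to pass from $B_k$ to $B(z,\lambda 2^{-k}r)$, and doing the same for the first term using $B(x,r)\subset B(z,2r)$, yields
\[
|u(z)-u_{B(x,r)}|\leq C\sum_{k=0}^{\infty}2^{-k}r\,\dashint_{B(z,\lambda 2^{-k}r)}\lip_a u\,\d\mm,
\]
which is a Riesz-type potential of $\lip_a u$.

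\textbf{Step 2: from the pointwise bound to the weak-type level set estimate.} Split the sum at a level $N$ to be chosen. For $k\geq N$, bound the average by the non-centred Hardy--Littlewood maximal function $M_{5\lambda r}(\lip_a u)(z)$ restricted to balls contained in $B(x,5\lambda r)$, and sum the geometric series $\sum_{k\geq N}2^{-k}r\lesssim 2^{-N}r$. For $k<N$, use that $B(z,\lambda 2^{-k}r)\subset B(x,5\lambda r)$ for $z\in B(x,r)$ and apply \eqref{eq:s_doubling} to compare measures, obtaining
\[
\dashint_{B(z,\lambda 2^{-k}r)}\lip_a u\,\d\mm\leq C\,2^{ks}\dashint_{B(x,5\lambda r)}\lip_a u\,\d\mm;
\]
multiplying by $2^{-k}r$ and summing in $k<N$ gives $C\,2^{N(s-1)}r\,A$, where $A:=\dashint_{B(x,5\lambda r)}\lip_a u\,\d\mm$. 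Choose $N$ so that the head and tail are comparable, namely $2^{-N}M_{5\lambda r}(\lip_a u)(z)\simeq 2^{N(s-1)}A$, which forces $2^{-N}\simeq (A/M(\lip_a u)(z))^{1/s}$. Inserting this back, one gets $|u(z)-u_{B(x,r)}|\leq C r\,A^{1/s}\,M_{5\lambda r}(\lip_a u)(z)^{(s-1)/s}$ for $\mm$-a.e.\ $z\in B(x,r)$. Chebyshev plus the weak $(1,1)$ bound for the localised maximal function (valid since $\mm$ is doubling) then yields
\[
\mm\bigl(\{z\in B(x,r):|u(z)-u_{B(x,r)}|>t\}\bigr)\leq C\,\mm(B(x,5\lambda r))\left(\frac{rA}{t}\right)^{\!\!s/(s-1)},
\]
and doubling absorbs $\mm(B(x,5\lambda r))$ into $\mm(B(x,r))$, giving the first inequality.

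\textbf{Step 3: the $BV$-case by approximation.} Given $u\in BV(\X)$, the definition of $|Du|$ and \eqref{eq:from_B_to_X} supply a sequence $u_n\in {\rm Lip}_{\rm loc}(\X)$ with $u_n\to u$ in $L^1_{\rm loc}(\mm)$ and $\int_{B(x,5\lambda r)}\lip_a u_n\,\d\mm\to |Du|(B(x,5\lambda r))$ (up to localising and using semicontinuity). Apply Step 2 to each $u_n$ and pass to the limit: the left-hand side is lower semicontinuous in $u_n$ along any subsequence converging $\mm$-a.e.\ on $B(x,r)$ (extracted by $L^1$-convergence), and the right-hand side converges to $C(r|Du|(B(x,5\lambda r))/\mm(B(x,5\lambda r)))^{s/(s-1)}$. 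Averages $u_{n,B(x,r)}$ also converge to $u_{B(x,r)}$, concluding the second inequality.

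\textbf{Main obstacle.} The delicate point is the balancing in Step~2: extracting the sharp exponent $s/(s-1)$ from the Riesz-type potential requires the precise lower bound \eqref{eq:s_doubling} (with exponent $s$ matching the one in the conclusion) and a careful choice of the cut-off level $N$ so that the geometric tails on both sides match. Once that balance is correct, both the Lipschitz conclusion and the $BV$-conclusion follow uniformly, and the approximation step is routine.
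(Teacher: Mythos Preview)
The paper does not give a proof of this proposition; it is stated as a direct consequence of \cite[Theorem~5.1]{HK2000}. Your sketch reproduces the standard argument behind that reference: telescope along dyadic balls to obtain a Riesz-type potential bound, split the sum and balance using the lower mass bound \eqref{eq:s_doubling} to extract the sharp exponent $s/(s-1)$, then conclude via the weak $(1,1)$ maximal inequality. This is correct and is essentially the proof in \cite{HK2000}.

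One small comment on Step~3: arranging a global Lipschitz sequence with $\int_{B(x,5\lambda r)}\lip_a u_n\,\d\mm\to |Du|(B(x,5\lambda r))$ for a \emph{fixed} ball is slightly delicate (the definition of $|Du|$ gives this on each open set with a sequence depending on the set). The cleaner route---and the one implicit in the paper via \eqref{eq:poincare_BV_rem4.1}---is to skip approximation and run Steps~1--2 directly for $u\in BV(\X)$, replacing $\lip_a u\,\d\mm$ by the measure $|Du|$ and using the weak $(1,1)$ bound for the maximal function of a measure on a doubling space.
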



\section{Extension sets in general metric measure spaces}\label{sec:general}

In this section we will present relations between different extension properties of sets that hold in general metric measure spaces. 

\begin{definition}[Extension property for $(\mathscr{A},\|\cdot\|_{\mathscr{A}})$]
\label{def:extensions}
Let $E\subset \X$ be a Borel set.
For every Borel set $U\subset \X$, let $\mathscr{A}(U)$ be a vector subspace of $L^0(\mm\restr{U})$ and $\|\cdot\|_{\mathscr{A}(U)}$ be a seminorm on it. We say that $E\subset \X$ has the extension property for $\mathscr{A}$ with respect to $\|\cdot\|_{\mathscr{A}}$ (or is a $(\mathscr{A},\|\cdot\|_{\mathscr{A}})$) extension set) if there exists a constant $C \ge 1$ and $T \colon \mathscr{A}(E) \to \mathscr{A}(\X)$ such that 
\begin{itemize}
    \item $T u =u$ on $E$ for every $u \in \mathscr{A}(E)$;
    \item $\|T u \|_{\mathscr{A}(\X)} \le C \| u \|_{\mathscr{A}(E)} $ for every $u \in \mathscr{A}(E)$.
\end{itemize}
Sometimes, in order to keep track of the function spaces associated with the operator \(T\), we will write \(T_{\mathscr A(E)\to \mathscr A(\X)}\).
\end{definition}

In this case, we define 
\begin{equation}
    \| T \|_{\mathscr{A}}:= \sup_{u \in \mathscr{A}(E)} \frac{\|T u\|_{\mathscr{A}(\X)}}{\| u \|_{\mathscr{A}(E)}} <\infty
\end{equation}
with the convention that $0/0=0$ and $t/0 = \infty$ for $t>0$.

Notice that we do not assume the extension operators to be linear. We consider various extension properties. We say that a set $E \subset \X$ satisfies ($\ca{BV}$) if $E$ has the extension property for $\ca{BV}$-functions with respect to $\|\cdot\|_{\ca{BV}}$. We define the properties ($BV$), ($L^{1,1}$), ($W^{1,1}$), ($L_w^{1,1}$), and ($W_w^{1,1}$) similarly. 


Although for closed extension sets they do not play such an important role, for completeness, we also consider the strong $\ca{BV}$- and $BV$-extension properties.
A set $E \subset \X$ satisfies (s-$\ca{BV}$) if it has the \emph{strong $\ca{BV}$-extension property} meaning that there is a constant $C>0$ so that for every $u \in \ca{BV}(E)$ there exists $Tu \in \ca{BV}(\X)$ with $\|Tu\|_{\ca{BV}(\X)}\le C\|u\|_{\ca{BV}(E)}$, $Tu\restr{E} = u$, and $|D(Tu)|(\partial E) = 0$.
We define (s-$BV$) similarly.


In the paper we focus on extension properties for closed sets. Typically, the extension properties improve when we go to the closure of a set. The more precise statement is as follows.

\begin{proposition}\label{prop_closedrepr}
Let $A \subset \X$ be a measurable set and $B \subset \X$ a closed set such that $\mm(B \triangle A)=0$. Then any of the properties 
($\ca{BV}$), ($BV$), ($L^{1,1}$), ($W^{1,1}$), ($L_w^{1,1}$), ($W_w^{1,1}$), (s-$\ca{BV}$), (s-$BV$)  
for $A$ implies the same property for $B$.
\end{proposition}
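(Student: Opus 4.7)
My plan is to observe that the hypothesis $\mm(A\triangle B)=0$ forces $\mm\restr{A}=\mm\restr{B}$ as Borel measures on $\X$, so that all eight function spaces on $A$ and on $B$ are literally identified as (semi)normed spaces, and the extension operator supplied by the hypothesis on $A$ can be reused verbatim as an extension operator for $B$.

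First, for any Borel $E\subset\X$ one has $|\mm(A\cap E)-\mm(B\cap E)|\leq \mm(A\triangle B)=0$, so $\mm\restr{A}=\mm\restr{B}$ on $\mathscr{B}(\X)$. This immediately gives $L^1(\mm\restr{A})=L^1(\mm\restr{B})$ and $L^1_{\loc}(\mm\restr{A})=L^1_{\loc}(\mm\restr{B})$ with identical (semi)norms. Plugging the identity into Definition \ref{def:BV_on_borel} yields $|Du|_A=|Du|_B$ as Borel measures on $\X$, because the defining infima run over the same Lipschitz sequences converging in the same $L^1_{\loc}$-topology against the same measure. Consequently $\ca{BV}(A)=\ca{BV}(B)$ and $BV(A)=BV(B)$ with identical (semi)norms, and the absolute continuity condition $|Du|\ll\mm$ entering $L^{1,1}_w$ and $W^{1,1}_w$ is preserved. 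For the test-plan spaces $L^{1,1}$ and $W^{1,1}$, the admissibility condition ${\e_t}_*\ppi\leq C\mm\restr{B}$ is literally the same as ${\e_t}_*\ppi\leq C\mm\restr{A}$, so the class of admissible $\infty$-test plans coincides, and hence so do the minimal $1$-weak upper gradients and the Sobolev (semi)norms.

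With $\mathscr{A}(A)=\mathscr{A}(B)$ as (semi)normed spaces, I would simply set $T_B:=T_A$ for the extension operator. The quantitative norm bound $\|T_Bu\|_{\mathscr{A}(\X)}\leq C\|u\|_{\mathscr{A}(B)}$ is inherited verbatim, and the $\mm\restr{B}$-a.e.\ identity $T_Bu=u$ on $B$ follows from the analogous $\mm\restr{A}$-a.e.\ identity on $A$ combined with $A=B$ $\mm$-a.e.

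The step I expect to require the most care is the strong variants (s-$\ca{BV}$) and (s-$BV$), for which one must additionally verify $|D(T_Au)|(\partial B)=0$ starting from $|D(T_Au)|(\partial A)=0$; since topological boundaries can change substantially under measure-zero perturbations, the operator $T_A$ need not work directly. My plan here is first to reduce to the case $A\subseteq B$ by replacing $A$ with $A\cap B$ (which preserves $\mm\restr{A}$, and hence the function spaces and the strong extension property, together with its operator), so that $\mathrm{int}(A)\subseteq \mathrm{int}(B)$ and $\partial B\setminus \partial A\subseteq B\setminus\overline{A}$, a relatively open null subset of $B$; one then argues that $|D(T_Au)|$ does not charge such a set, using that the Lipschitz approximations in the definition of the total variation can be localized to $\overline{A}$ without changing the computed value on $\overline{A}$.
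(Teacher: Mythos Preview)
Your high-level idea—that $\mm\restr{A}=\mm\restr{B}$ and hence the function spaces on $A$ and $B$ ``coincide''—is appealing but not correct as stated, and you never invoke the closedness of $B$ in the main argument, which is precisely what makes the proposition work.

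For the $BV$-type spaces: it is true that $|Du|_A=|Du|_B$ as Borel measures on $\X$ (your argument there is fine). But the seminorm on $\ca{BV}(A)$ is $|Du|_A(A)$, not $|Du|_A(\X)$, and $|Du|_A$ can charge $\X\setminus A$. Take $\X=\R$, $A=[0,2]\setminus\{1\}$, $B=[0,2]$, $u=\chi_{[0,1)}$: then $|Du|_A=\delta_1$, so $|Du|_A(A)=0$ while $|Du|_B(B)=1$. The seminorms are \emph{not} identical. What is true (and sufficient) is the inequality $|Du|_A(A)\le |Du|_A(\X)=|Du|_B(\X)=|Du|_B(B)$, where the last equality uses that $B$ is closed via \eqref{eq:from_B_to_X}. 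This is how the paper argues.

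For $W^{1,1}$ and $L^{1,1}$: you say the compression bounds agree, hence ``the class of admissible $\infty$-test plans coincides''. But an $\infty$-test plan on a set $S$ must also be \emph{concentrated on} $\Lip([0,1],S)$, and $A\neq B$ as sets. A test plan on $A$ could in principle give positive mass to curves that visit $A\setminus B$. The paper supplies the missing argument: if some such curve exits $B$, then by closedness of $B$ it stays outside $B$ on an open time interval, so at some fixed $t_0$ a set of curves of positive $\ppi$-measure has $\gamma_{t_0}\in A\setminus B$; the compression bound then gives $\mm(A\setminus B)>0$, a contradiction. Your proof simply asserts the conclusion and skips this step.

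For the strong variants, your reduction to $A\subseteq B$ is not justified: replacing $A$ by $A\cap B$ changes $\partial A$, and the hypothesis $|D(T_Au)|(\partial A)=0$ does not transfer to $\partial(A\cap B)$. The paper instead first modifies $T$ (via the smoothing operator of Proposition~\ref{prop:smoothing}) so that $Tu$ is locally Lipschitz on $\X\setminus\overline A$, whence $|DTu|\ll\mm$ there; it then splits $\partial B$ into $\partial B\setminus\overline A$ (handled by $\mm$-nullity and the local Lipschitz property), $\partial B\cap\partial A$ (handled by the strong hypothesis on $A$), and $\partial B\cap\mathrm{int}(A)$ (such points lie outside $\mathrm{spt}(\mm)$).
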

\begin{proof}
Let us start with the proof for ($\ca{BV}$) and ($BV$).
Let $u \in BV(B)$. Since $B$ is closed, \eqref{eq:from_B_to_X} holds true.
Moreover, since $\mm(B \triangle A)=0$, we have $|D u|_A(\X) = |D u|_B(\X)$. Hence, $u \in BV(A)$ and 
\[
|Du|_A(A) \le |Du|_A(\X)= |D u|_B(\X)\stackrel{\eqref{eq:from_B_to_X}}{=} |Du|_B(B).
\]
Consequently, for the extension operator $T \colon BV(A) \to BV(X)$ we have
\[
\|T\|_{BV(B) \to BV(X)} \le \|T\|_{ BV(A) \to BV(X)}
\]
for both, the homogeneous and full norm. The claim for ($L_w^{1,1}$) and ($W_w^{1,1}$) follows directly from the above.

Let us then prove the claim for  ($L^{1,1}$) and ($W^{1,1}$).  It suffices to show that any $\infty$-test plan $\ppi$ on $A$ is also an $\infty$-test plan on $B$. Suppose then that $\ppi$ is not an $\infty$-test plan on $B$. Then $\ppi$ is not concentrated on $\Lip([0,1],B)$. Thus there exists $\Gamma \subset \Lip([0,1],A)$ with $\ppi(\Gamma)>0$ such that for every $\gamma \in \Gamma$ there exists $t \in [0,1]$ so that $\gamma_t \notin B$. Since $B$ is closed, for the above $\gamma$ and $t$ there exists $\varepsilon>0$ so that 
\[
\gamma([0,1]\cap [t-\varepsilon,t+\varepsilon]) \cap B \ne \emptyset.
\]
Consequently, there exists $t_0 \in [0,1]$ and $\Gamma'\subset \Gamma$ with $\ppi(\Gamma')>0$ such that for all $\gamma \in \Gamma'$ we have $\gamma_{t_0} \notin B$.

Since $\ppi$ is an $\infty$-test plan, ${e_{t_0}}_\ast \ppi \le C\mm$ and so
$\mm(\{\gamma_{t_0}\,:\, \gamma \in \Gamma'\})>0$ which contradicts the assumption $\mm(B \triangle A)=0$.

Let us now consider the properties (s-$\ca{BV}$) and (s-$BV$). Let $T$ be the strong $BV$-extension operator for $A$ and take $u \in BV(B)$. As noted in the beginning of the proof, $|Du|_A(A) \le |Du|_B(B)$. Hence we only need to prove that $|D(Tu)|(\partial B)=0$. By Proposition \ref{prop:smoothing}, we may assume that $Tu\restr{\X\setminus \overline{A}} \in {\rm Lip}_{\rm{loc}}(\X \setminus \overline{A})$.
Since $\mm(A \Delta B) = 0$ and $B$ is closed, we have $\mm(\partial B \setminus \overline{A}) = 0$ and so $|D(Tu)|(\partial B \setminus \overline{A}) = 0$. Since $T$ is a strong $BV$-extension operator for $A$, we have $|D(Tu)|(\partial A)=0$ and it remains to prove that 
$|D(Tu)|(\partial B \cap {\rm int}\,{A}) = 0$. Towards this, consider $x \in \partial B \cap {\rm int}\,{A}$. Using again the assumption $\mm(A \Delta B) = 0$ we get that $x \notin {\rm spt}(\mm)$. Hence also $x \notin {\rm spt}(|D(Tu)|)$.
\end{proof}



%

\begin{remark}
There are two particularly interesting settings where we can apply Proposition \ref{prop_closedrepr}.
The first one is when $A$ is a closed extension set in a metric space $(\X,\sfd)$ with a doubling reference measure $\mm$. Then, by Lebesgue density point theorem, $\mm(A \Delta \overline{A^1}) = 0$. 
Hence, when studying the properties of $A$, by Proposition \ref{prop_closedrepr} we can first consider $\overline{A^1}$ instead (and then return to $A$). See Section \ref{sec:connectivity}.

The second one is when $A$ is an open extension set (with respect to one of the functional spaces we considered) in a PI space $(\X,\sfd,\mm)$. Then $\mm(\partial A)=0$ as a corollary of a measure density for extension sets (see Proposition \ref{prop:measdens} for an example of such result). Now, by Proposition \ref{prop_closedrepr}, the closure $\overline{A}$ is also an extension set (with respect to the same functional spaces), which might have even improved extension properties compared to $A$, see Theorem \ref{thm:main}.
\end{remark}

Let us then consider the extension properties of closed subsets $E \subset \X$. 
Similar conclusions hold for the homogeneous spaces. However, since in this paper we mostly focus on the spaces with the full norms, we prove the proposition for the full norms.
\begin{proposition}\label{prop:general_implications}
Let \((\X, \sfd, \mm)\) be a metric measure space and $E \subset \X$ a nonempty closed set. Then the following implications for \(E\) hold for the full norms
\[
\begin{array}{ccc}
( \text{s-}BV ) & \Longrightarrow & ( W^{1,1} ) \\
\Downarrow && \Downarrow\\
(W_w^{1,1}) &\Longrightarrow & ( BV ).
\end{array}
\]

\end{proposition}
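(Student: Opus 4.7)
The plan is to prove the four implications separately; throughout, set $\Omega \coloneqq \X\setminus E$, which is open since $E$ is closed. For the two implications (s-$BV$) $\Rightarrow$ ($W^{1,1}$) and (s-$BV$) $\Rightarrow$ ($W_w^{1,1}$), I begin with $u\in W^{1,1}(E)$ (respectively $u\in W_w^{1,1}(E)$); then $u\in BV(E)$ with a comparable norm, so the strong $BV$-extension gives $\tilde u\in BV(\X)$ with $\tilde u\restr{E} = u$, $|D\tilde u|(\partial E) = 0$, and $\|\tilde u\|_{BV(\X)}\le C\|u\|_{BV(E)}$. I then apply the smoothing operator of Proposition~\ref{prop:smoothing} with parameter $\varepsilon$ to $\tilde u$ on $\Omega$ and set
\[
\hat u \coloneqq \tilde u + (T_\varepsilon\tilde u - \tilde u)\chi_\Omega,
\]
which agrees with $u$ on $E$ and with $T_\varepsilon\tilde u\in\Lip_{\loc}(\Omega)$ on $\Omega$. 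By the strong-$BV$ assumption and~\eqref{eq:notchargingtheboundary}, $|D\hat u|(\partial\Omega) = 0$. For ($W^{1,1}$) the bounds~\eqref{eq:smoothing_normbounds} give $\hat u\restr{\Omega}\in W^{1,1}(\Omega)$, while $\hat u\restr{\text{int}(E)} = u\restr{\text{int}(E)}$ lies in $W^{1,1}(\text{int}(E))$ because any $\infty$-test plan on $\text{int}(E)\subset E$ is automatically an $\infty$-test plan on $E$ (its marginals are concentrated on $\text{int}(E)\subset E$ and remain dominated by $\mm\restr{E}$); Lemma~\ref{lemma:fromlocal_to_global_W11}, together with Remark~\ref{rmk:lem44} when $\text{int}(E)=\emptyset$, then yields $\hat u\in W^{1,1}(\X)$ with the desired norm control. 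For ($W_w^{1,1}$) I instead decompose $|D\hat u|$ across $\text{int}(E)$, $\Omega$, and $\partial E$: the first piece is absolutely continuous because $u\in W_w^{1,1}(E)$, the second because $\hat u\restr{\Omega}$ is locally Lipschitz, and the third vanishes.

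For the two implications ($W^{1,1}$) $\Rightarrow$ ($BV$) and ($W_w^{1,1}$) $\Rightarrow$ ($BV$), I begin with $u\in BV(E)$ and use the relaxation formula~\eqref{eq:from_B_to_X} (valid because $E$ is closed) to produce a sequence $u_n\in\LIP(\X)$ with $u_n\to u$ in $L^1_{\loc}(\mm\restr{E})$ and $\int_E\lip_a u_n\,\d\mm\to|Du|_E(E)$. A standard truncation argument upgrades this to $L^1(\mm\restr{E})$-convergence while preserving $\limsup_n\|u_n\|_{W^{1,1}(E)}\le\|u\|_{BV(E)}$; here I use that $|Du_n|_{1,E}\le\lip_a u_n$ $\mm$-a.e.\ for Lipschitz $u_n$, so the Sobolev, weak-Sobolev, and $BV$ norms collapse to the same quantity along this sequence up to $o(1)$. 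Applying the assumed ($W^{1,1}$)- or ($W_w^{1,1}$)-extension operator to each $u_n$ produces $Tu_n\in BV(\X)$ with $\sup_n\|Tu_n\|_{BV(\X)}<\infty$ and $Tu_n\restr{E} = u_n$. An $\mm$-a.e.\ subsequential limit $\bar T u$, extracted by exhausting $\X$ with bounded sets and running a diagonal argument on the uniform $L^1$-bounds, together with the lower semicontinuity of total variation under $L^1_{\loc}$-convergence, yields $\bar T u\in BV(\X)$ with $\|\bar T u\|_{BV(\X)}\le C'\|u\|_{BV(E)}$ and $\bar T u\restr{E} = u$.

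The main obstacle I foresee is in the \emph{Sobolev $\Rightarrow BV$} direction: in a general metric measure space, without Rellich-type compactness and without linearity or continuity of the extension operator, there is no automatic convergent subsequence of $\{Tu_n\}$. The remedy is the ball-by-ball diagonal extraction sketched above, which crucially uses only the uniform $L^1$-bound inherited from the uniform $BV$-bound. A secondary subtlety in the \emph{(s-$BV$) $\Rightarrow$ ($W^{1,1}$)} half is verifying that restricting from $W^{1,1}(E)$ to $W^{1,1}(\text{int}(E))$ is innocuous; this reduces to the observation that $\infty$-test plans on $\text{int}(E)$ descend to $\infty$-test plans on $E$, giving $|Du|_{1,\text{int}(E)}\le|Du|_{1,E}$ $\mm$-a.e.\ on $\text{int}(E)$.
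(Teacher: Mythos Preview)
Your treatment of $(\text{s-}BV)\Rightarrow(W^{1,1})$ and $(\text{s-}BV)\Rightarrow(W^{1,1}_w)$ is correct and coincides with the paper's argument: extend by strong $BV$, smooth on $\Omega=\X\setminus E$, and glue via Lemma~\ref{lemma:fromlocal_to_global_W11}.

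The gap is in the direction $(W^{1,1})\Rightarrow(BV)$ (and likewise $(W^{1,1}_w)\Rightarrow(BV)$). You produce extensions $Tu_n\in BV(\X)$ with $\sup_n\|Tu_n\|_{BV(\X)}<\infty$ and then claim an $\mm$-a.e.\ (or $L^1_{\loc}$) subsequential limit can be extracted ``by exhausting $\X$ with bounded sets and running a diagonal argument on the uniform $L^1$-bounds''. This step fails: in a general metric measure space a bounded sequence in $L^1(\mm)$ (or even in $BV(\X)$) need not have any subsequence converging $\mm$-a.e.\ or in $L^1_{\loc}$. There is no Rellich--Kondrachov compactness available here, $L^1$ is not reflexive, and Dunford--Pettis would require uniform integrability, which you do not have. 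The diagonal argument over balls does not produce convergence from mere $L^1$-boundedness.

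The paper circumvents this by first reducing to $u\in BV(E)\cap L^\infty(\mm\restr{E})$ (invoking a step from \cite[Prop.~3.4]{CKR23} to pass from bounded functions to all of $BV$). Then the approximants $u_i$ and their extensions can be truncated so that the resulting $\bar u_i$ are uniformly bounded in $L^\infty(\mm)$ \emph{and} in $L^1(\mm)$, hence uniformly bounded in $L^p(\mm)$ for some fixed $p>1$. Weak compactness in the reflexive space $L^p(\mm)$ together with Mazur's lemma now yields a strongly $L^p$-convergent (hence $L^1_{\loc}$-convergent) sequence, after which lower semicontinuity of the total variation applies. This $L^\infty$-truncation combined with $L^p$ weak compactness is the missing idea in your proposal.
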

\begin{proof}\ \\
{\sc Proof of} \((\text{s-}BV)\Rightarrow(W^{1,1})(\text{resp.}(W^{1,1}_w))\): 
Assume that \(E\) is an s-\(BV\) extension set, and denote by \(\bar F\colon BV(E)\to BV(\X)\) an associated extension operator. 
Denote \(\Omega\coloneqq \X\setminus E\neq \emptyset\). 
Fix 
\(u\in A(E)\), where \(A(E)\) is either of the spaces \(W^{1,1}(E)\) or \(W^{1,1}_w(E)\).
Fix also \(\varepsilon >0\), and  define
\(u_1\coloneqq T_\varepsilon(\bar F u|_\Omega) \) and \(u_2\coloneqq u\).
Here, 
\(T_\varepsilon\colon BV(\Omega)\to \Lip_{\loc}(\Omega)\) stands for the smoothing operator given by Proposition \ref{prop:smoothing}.
It is then clear that \(u_1\in W^{1,1}(\Omega) \cap W^{1,1}_w(\Omega) \). Also, since
\(\X\setminus \bar\Omega= {\rm int}(E)\subset E\), we have that 
\(u_2|_{\X\setminus \bar \Omega}\in A(\X\setminus \bar \Omega)\).
We now define
\[
Fu\coloneqq \chi_{\Omega} u_1 + \chi_{E} u_2, 
\]
where the function \(u_2\) is understood as extended to \(0\) in \(\X\setminus E\).
We claim that the map \(F\) defined above is an \((A(\X), \|\cdot\|_{A(\X)})\)-extension operator.
First of all, by the definition we have that \(Fu=u\) holds \(\mm\)-a.e.\ on \(E\). 
In order to see that \(Fu\in A(\X)\), let us 
first observe that 
we can write
\(Fu=\bar Fu+\chi_\Omega (u_1-\bar Fu|_\Omega)\in BV(\X)\), where, similarly as above, we consider \(\bar F u|_\Omega\) to be extended to \(0\) in \(\X\setminus \Omega\). Then, 
the strong extension property of \(\bar F\) (which gives \(|D\bar Fu|(\partial E)=0\)) and the property \eqref{eq:notchargingtheboundary} in Proposition \ref{prop:smoothing}, we conclude that \(|DFu|(\partial E)=0\). Therefore, in the case \(A(\X)=W^{1,1}_w(\X)\), the proof is complete. 

In the case \(A(\X)=W^{1,1}(\X)\) we will check that the assumptions of Lemma \ref{lemma:fromlocal_to_global_W11} are satisfied:
\(\mm\)-a.e.\ in \(\Omega\) it holds that \(Fu=u_1\) and \(\mm\)-a.e.\ in \(\X\setminus \bar \Omega\) it holds that \(Fu=u_2\). 
As noticed above, it also holds that \(|DFu|(\partial \Omega)=|DFu|(\partial E)=0\). Thus, Lemma \ref{lemma:fromlocal_to_global_W11} (taking into account Remark \ref{rmk:lem44} in the case \(\X\setminus \bar \Omega=\emptyset\)) yields \(Fu\in W^{1,1}(\X)\) and 
it holds that 
\[
|DFu|_{1,\X}\leq \chi_\Omega |Du_1|_{1,\Omega} +\chi_{\X\setminus \bar \Omega} |Du|_{1,\X\setminus\bar \Omega}. 
\]

It remains to verify the norm bounds for \(\|Fu\|_{A(\X)}\). We will use the similar estimates as in \cite[Proposition 4.6]{CKR23}. First, let us estimate the \(L^1(\mm)\)-norm of \(Fu\). 
Using the expression \(Fu=\bar Fu+\chi_\Omega (u_1-\bar Fu|_\Omega)\), the first inequality in \eqref{eq:smoothing_normbounds}, the fact that \(\bar F\) is a strong BV-extension operator (denoting its operator norm by \(C_{\bar F}\)) and that \(A(E)\subset BV(E)\) with \(\|f\|_{BV(E)}\leq \|f\|_{A(E)}\) for all \(f\in A(E)\), we get
\[
\begin{split}
\|Fu\|_{L^1(\mm)}\leq &\, \|\bar Fu\|_{L^1(\mm)}+\|T_\varepsilon(\bar Fu\restr{\Omega})\|_{L^1(\Omega)} \\
\leq &\, \|\bar Fu\|_{L^1(\mm)}
+ \|\bar Fu\restr{\Omega}\|_{L^1(\Omega)}+\varepsilon
\leq 2\|\bar F u\|_{L^1(\mm)}+\varepsilon \\
\leq &\, 2 C_{\bar F}\|u\|_{BV(E)}+\varepsilon
\leq 2 C_{\bar F} \|u\|_{A(E)} +\varepsilon.
\end{split}
\]
In the case \(A(\X)=W^{1,1}(
\X)\) we need to estimate \(\||DFu|_{1,\X}\|_{L^1(\mm)}\). The estimate for the \(L^1(\mm)\)-norm of \(
\big|\frac{\d |DFu|}{\d \mm}\big|\) (and all the subsequent estimates) in the case \(A(\X)=W^{1,1}_w(\X)\) follows analogously. 
Note first that
\[
\||DFu|_{1,\X}\|_{L^1(\mm)}\leq \big\|\big|D\big(T_\varepsilon(\bar Fu\restr{{\Omega}})\big)\big|_{1,\X}\big\|_{L^1(\Omega)}+\||Du|_{1,X\setminus \bar \Omega}\|_{L^1(\X\setminus \bar \Omega)}.
\]
Let us estimate the first summand in the above inequality. Due to the fact that \(|DT_\varepsilon(\bar F u|_{\Omega})|_{1,\Omega}\leq \lip(T_\varepsilon(\bar Fu|_\Omega)\) holds \(\mm\)-a.e.\ in \(\Omega\) and due to the second inequality in \eqref{eq:smoothing_normbounds}, we get
\[
\begin{split}
\||DT_\varepsilon(\bar F u\restr{{\Omega}})|_{1,\Omega}\|_{L^1(\Omega)}\leq &\, \|\lip(T_\varepsilon(\bar Fu\restr{\Omega)}\|_{L^1(\Omega)}\leq C \big(|D\bar Fu|(\Omega)+\varepsilon\big)\\
\leq &\, C\big(|D\bar F u|(\X) +\varepsilon\big)\leq C\big(\|u\|_{W^{1,1}(E)}+\varepsilon\big).
\end{split}
\]
The second summand \(\||Du|_{1,\X\setminus \bar\Omega}\|_{L^1(\X\setminus \bar \Omega)}\) is bounded above by \(\|u\|_{W^{1,1}(E)}\). All in all, we have that 
\[
\|Fu\|_{W^{1,1}(\X)}=\|Fu\|_{L^1(\mm)}+\||DFu|_{1,\X}\|_{L^1(\mm)}\leq 2C_{\bar F}\|u\|_{W^{1,1}(E)}+(C+1)\|u\|_{W^{1,1}(E)}+ (C+1)\varepsilon.
\]
Choosing \(\varepsilon\) to be precisely \(\|u\|_{W^{1,1}(E)}\), we conclude the proof.




\smallskip

\noindent
{\sc Proof of} \(W^{1,1}(\text{resp. }W^{1,1}_w)\Rightarrow BV\):
 Assume that \(E\) is an \((A(\X),\|\cdot\|_{A(\X)})\)-extension set, where \(A(\X)\) is either of the spaces \(W^{1,1}(\X)\) or \(W^{1,1}_w(\X)\).
To prove the statement, it will be enough for us to show that $E$ is a $(BV \cap L^\infty, \|\cdot\|_{BV})$-extension set. Then, 
by applying verbatim the arguments in step 2) of \cite[Prop.\ 3.4]{CKR23} we can conclude that \(E\) is a \(BV\)-extension set.
  
Thus, let us consider $u \in BV(E) \cap L^\infty(\mm\restr{E})$ and define $\|u\|_{L^\infty(\mm\restr{E})}=:M<\infty$. By definition of $BV(E)$, there exists $(u_i)_i \subset {\rm Lip}_{\rm loc}(E)$ such that $u_i \to u$ in $L^1_{\rm loc}(E)$ and $\int_{E} \lip(u_i)\,\d \mm \to |D u|_E(E)$.
    We can assume without loss of generality that $\|u_i\|_{L^\infty(\mm\restr{E})} \le 2M$. Indeed, 
    by  defining $v_i:=(u_i \wedge 2M) \vee (-2M)$, we get a sequence satisfying 
    \[v_i \to v\,\text{ in} \,L^1_{\textrm{loc}}(E)\quad \text{and } \quad\lip(v_i) \le \lip(u_i)\, \text{ for every }i\in \mathbb N.\]
    Using the fact that $|D u |_E(E)\le \limi_i \int_E \lip(v_i)\,\d \mm$ and the fact that $u_i$ is chosen to be optimal, we get that
    \(\int_E \lip (v_i)\,\d \mm \to |D u|_E(E)\),
    thus proving the claim. We do not relabel the sequence $(u_i)_i$.

    Let us consider $T$ to be the extension operator given by assumption. We define $\tilde u_i:= T u_i \in A(\X)$. Consider $\varphi_M(t):= (t \wedge 2M) \vee (-2M)$. Let us define $\bar{u}_i:=\varphi_M \circ \tilde{u}_i \in A(\X) \cap L^{\infty}(\mm)$. We notice that $\|\bar{u}_i\|_{L^\infty(\mm)}\le 2M$, $\bar{u}_i=u_i$ on $E$, $\| \bar{u}_i\|_{BV(\X)}\le \| \tilde{u}_i\|_{BV(\X)}$ (in particular, we used the fact that the function $\varphi_M$ is $1$-Lipschitz).
    Fix any $p >1$. We set $C:=\sup_i \| \bar{u}_i\|_{L^1(\mm)}$ and observe that $\| \bar{u}_i \|_{L^p(\mm)}^p \le (2M)^{p-1} C$.
    By compactness and the use of Mazur's lemma, we get that there exists a subsequence of \((\bar u_i)_i\) converging in \(L^p(\mm)\) to  some $v\in L^p(\mm)\cap L^\infty(\mm)$. In particular the said subsequence of \((\bar u_i)_i\) converges in $L^1_{\rm loc}(\X)$.
    We do not relabel such sequence as none of the properties above is affected by taking convex combination in Mazur's lemma.   
    Since $u_i \to u$ in $L^1_{\textrm{loc}}(E)$, we get that $v=u$ $\mm$-a.e.\ on $E$.
    The lower semicontinuity of total variation with respect to $L^1_{\textrm{loc}}$ convergence gives
    \begin{equation}
        |D v|(\X) \le \limi_{i \to \infty} |D \bar{u}_i|(\X).
    \end{equation}
    We get
    \begin{equation}
    \begin{aligned}
        \| v\|_{BV(\X)} &\le \limi_{i \to \infty} \|\bar{u}_i\|_{BV(\X)} \le \limi_{i \to \infty} \|\tilde u_i\|_{BV(\X)} \le \limi_{i \to \infty} \|\tilde u_i\|_{A(\X)}\le C \limi_{i \to \infty} \| u_i\|_{A(E)}\\
        & \le C \limi_{i \to \infty} \Big(\|u_i\|_{L^1(\mm\restr{E})} +\int_E \lip(u_i)\,\d \mm\Big) \le C \big( \|u\|_{L^1(\mm|_E)} + |D u|_E(E)\big),
    \end{aligned}
    \end{equation}
    where $C$ is the constant appearing in the definition of $A$-extension set. 
    Therefore, the operator \(T_{BV}\colon BV(E)\cap L^\infty(E)\to BV(\X)\cap L^\infty(\X)\) associating to every \(u\in BV(E)\cap L^\infty(E)\) 
    the function \(v\in BV(\X)\cap L^\infty(\X)\) obtained as above, satisfies all the properties of being an $(BV \cap L^\infty, \|\cdot\|_{BV})$-extension operator, and thus we conclude that \(E\) is a \(BV\cap L^\infty\)-extension set.
\end{proof}
%


%


\begin{remark}
In general, the implications in the above proposition cannot be reversed. We mention several examples in this regard. 
\begin{enumerate}
 \item In Example \ref{ex:BVnotW11} we provide an example of a closed \(BV\)-extension set, which is not a \(W^{1,1}\)- nor a \(W^{1,1}_w\)-extension set. This shows that the Question \ref{q:main} is indeed meaningful, in the sense that one really has to identify some further properties of the set or of the space in order to have the implication holds true. In section \ref{sec:plane} we find that in the Euclidean plane for compact sets with finitely many connected components in the complement the implication in question is verified.
 
 \item In \cite{GarciaBravoRajala2024} it is shown that fat enough planar Sierpi\'nski carpets are $W^{1,1}$-extension sets. However, since $\partial E = E$ for any closed representative of a Sierpi\'nski carpet, there is no representative that would satisfy (s-$BV$) or (s-$\ca{BV}$). 
 

 \item In Example \ref{ex:tripod}
 we show that, even if a closed set \(E\) satisfies \(\mm(\partial E)=0\), being a \(W^{1,1}\)- or a \(W^{1,1}_w\)-extension set does not imply the \(( s\text{-}BV )\)-extension property.
\end{enumerate}
\end{remark}



Towards the Example \ref{ex:BVnotW11}, we first discuss a one-dimensional case, that will be auxiliary in the main construction of the actual example.

%

\begin{example}
\label{example:slice_example}
Consider the space $\X=\mathbb{R}$ endowed with the Euclidean distance and $\mm=\mathcal{L}^1\restr{(-\infty,0]}+ \varepsilon\mathcal{L}^1\restr{(0,\infty)}$. Let $E=[0,\infty)$ and suppose that $T$ is an extension operator from $W^{1,1}(E)$ to $W^{1,1}(\X,\sfd,\mm)$. We bound from below $\|T\|_{W^{1,1}}$. 
We denote for $\Omega \subset \R$ open with $W^{1,1}_e(\Omega)$ the classical Sobolev space in the open set \(\Omega\) of the Euclidean space.

We consider the function $u \in {\rm Lip}_{bs}(E)\subset W^{1,1}(E)$ defined as $u=1$ on $[0,1)$, $u(x)=2-x$ for $x \in [1,2]$ and $u=0$ on $(2,\infty)$. We have that $\|u\|_{W^{1,1}(E)}=\varepsilon \|u\|_{W^{1,1}_e(0,\infty)}=\varepsilon \frac{5}{2}$.
We define $v:=E u \in W^{1,1}(\X)$. In particular, $v \in W^{1,1}_e(\R)$, hence it admists an absolutely continuous representative. We distinguish two cases. 

If $v(x) \ge \frac{1}{2}$ for every $x \in [-1,0]$, we have
\begin{equation*}
    \|v\|_{W^{1,1}(\X)} \ge \|v\|_{L^1(\mm)} \ge \frac{1}{2}.
\end{equation*}

If $v(x) < \frac{1}{2}$ for some $x \in [-1,0]$ we have (here we denote by $l$ the linear function whose graph connects the point $(x,v(x))$ to $(0,1)$)
\begin{equation*}
    \|v\|_{W^{1,1}(\X)} \ge \|v'\|_{L^1((-x,0),\mathcal{L}^1)} \ge \|l'\|_{L^1((-x,0),\mathcal{L}^1)} = \frac{1-v(x)}{|x|} \ge 1-\frac{1}{2}=\frac{1}{2}.
\end{equation*}
From the two cases, we conclude
$\|T\|_{W^{1,1}} \ge (5\varepsilon)^{-1}$.
\end{example}

In the next example we go from the one-dimensional case of Example \ref{example:slice_example} to a two-dimensional one. The idea is to let the parameter $\varepsilon$ in Example \ref{example:slice_example} tend to zero. This is made possible by adding another direction in the space in order to modify the parameter $\varepsilon$.

\begin{example}\label{ex:BVnotW11}
Consider the space $\X = \mathbb R \times [0,1]$ with the Euclidean distance $\sfd_e$. Define the function $w$, where
\[
w(x,y) = \begin{cases}
    y, & \text{if }x \ge 0\\
    1, & \text{if }x < 0.
\end{cases}
\]
We consider the metric measure space $(\X,\sfd_e,\mu)$ with the measure $\mu:=w \mathcal L^2$. Then the set $E = [0,\infty)\times[0,1]$ is a $BV$-extension set, but not a $W^{1,1}$- nor a $W_w^{1,1}$-extension set.

Let $u \in BV(E)$ and consider the zero extension of the function $u$ to $\X$. In order to see that the zero extension gives a bounded operator, we observe
\begin{align*}
\|u\|_{BV(\X)} - \|u\|_{BV(E)} &= \int_{0}^1 u^+(0,y)w(0,y)\,\d y \\
& \le \int_{0}^1\int_0^\infty \left(u(x,y) +\frac{d}{dx}u(x,y)\right)w(x,y)\,\d x\, \d y = \|u\|_{BV(E)},
\end{align*}
where $u^+(0,y) = \lim_{x\searrow 0}u(x,y)$ exists for almost every $y \in [0,1]$ for the ACL-representative of $u$ (for the reference to absolute continuity on lines and this statement, we refer the reader to \cite[Theorem 6.1.13]{HKST15}).

Let us then show that $E$ is not a $W^{1,1}$-extension set in $(\X,\sfd_e,\mu)$. This example builds upon the computation in Example \ref{example:slice_example}.
We argue by contradiction and assume that there exists an extension operator $T \colon W^{1,1}(E) \to W^{1,1}(\X)$. Given $\delta >0$, we define the two cubes
$$ Q_\delta^-:=\left[ 0, \frac{\delta}{2} \right] \times \left[ \frac{\delta}{2}, \delta \right],\qquad Q_\delta^+:=\left[0, \frac{\delta}{2}+\frac{\delta}{4}\right]\times \left[\frac{\delta}{4}, \delta + \frac{\delta}{4} \right].$$
Let us define, given $A \subset \R \times [0,\infty)$ and $x \in \R \times [0,\infty)$, $\sfd_{\infty}(x,A):=\inf\{ \|x-y\|_{\infty}:\, y \in A\}$.
We consider the following function $u\colon E \to \R$, 
 \begin{equation*}
     u(x):=\begin{cases}
         1,\quad \text{if }x \in Q^-_\delta\\
         1-\frac{\delta} {2}\sfd_\infty(x,Q^-_\delta) \,\quad \text{if }x \in Q^+_\delta \setminus Q^-_\delta\\
         0,\quad \text{if }x \in E \setminus Q^+_\delta.
     \end{cases}
 \end{equation*}
 It can be readily checked that $u \in {\rm Lip}_{bs}(E)$, hence $u \in W^{1,1}(E)$. We define $v := T u \in W^{1,1}(\X)$.

For $\mathcal{L}^1$-a.e.\ $t \in [\delta/2,\delta]$, $v(\cdot,t) \in W^{1,1}(\R,\sfd_e,\mathcal{L}^1\restr{(-\infty,0)}+t \mathcal{L}^1\restr{[0,\infty)})$.
By repeating the argument as in the previous example, we get
\begin{equation*}
    \|\partial_x v(\cdot,t)\|_{W^{1,1}(\R,\sfd_e,\mathcal{L}^1\restr{(-\infty,0)} +t \mathcal{L}^1\restr{[0,\infty)})} \ge \frac{1}{2}. 
\end{equation*}
By integrating over the $t$ variable, we have 
 \begin{equation*}
    \|\nabla v\|_{L^1(\mu)} \gtrsim \delta. 
\end{equation*}
On the other hand, $\|u\|_{W^{1,1}(E)} \lesssim \delta^3$. Indeed, this follows by the two estimates
\begin{equation*}
    \begin{aligned}
        &\int_E |u|\,y\,\d \mathcal{L}^2 \lesssim \delta \mathcal{L}^2(Q^+_\delta) \lesssim \delta^3\text{ and}\\
        & \int_E |\nabla u|\,y\,\d \mathcal{L}^2 \lesssim \delta \mathcal{L}^2(Q^+_\delta \setminus Q^-_\delta) \lesssim \delta^3.\\
    \end{aligned}
\end{equation*}

This in particular gives $\|T\| \gtrsim \delta^{-2}$ for every $\delta >0$, contradicting its finiteness.
The argument above also shows that $E$ is not a $W_w^{1,1}$-extension set in $(\X,\sfd_e,\mu)$. 
\end{example}

%

\subsection{On the relations between $W^{1,1}$- and $W^{1,1}_w$-extension sets}

We end this section by discussing the relation between \(( W^{1,1} )\) and \(( W^{1,1}_w )\) extension properties for closed sets.
In \cite[Prop. 4.2]{CKR23} it was shown that $(W^{1,1}) \Rightarrow (W_w^{1,1})$ holds for open subsets. The proof relied on the smoothening of a $W_w^{1,1}$-function to a $W^{1,1}$-function via Proposition \ref{prop:smoothing}. This argument does not work for closed sets as sketched in the following example.

\begin{remark}[On the relation between \(( W^{1,1} )\) and \(( W^{1,1}_w )\) via smoothing argument]
As our space we consider \((\R, \sfd_{e}, \mm)\), with \(\mm=\mathcal L^1+\delta_0\). Then one can check that \(E=[0,1]\) is a closed \(W^{1,1}_w\)- and \(W^{1,1}\)-extension set. 
However, we cannot deduce \(W^{1,1}\)-extension property from \(W^{1,1}_w\)-extension property via a smoothing argument presented in the proof of Proposition \ref{prop:general_implications} (or Proposition \ref{prop:11_11w_relation} below). Indeed, take a 
function \(u\in W^{1,1}([0,1])\). The zero extension on the complement of \([0,1]\) gives an \(W^{1,1}_w\)-extension operator, call it \(\bar F\). Since the smoothing operator \(T_\varepsilon\) leaves the function (being equal to zero) in the complement unchanged, the function defined as
\(Fu\coloneqq \chi_{\R\setminus [0,1]}T_\varepsilon(\bar Fu\restr{\R\setminus [0,1]})+\chi_{[0,1]}u\) does not belong to \(W^{1,1}(\X
)\).
\end{remark}

In \cite[Ex. 4.7]{CKR23} an example was given of a domain that is a $W_w^{1,1}$-extension domain, but not a $W^{1,1}$-extension domain. For closed sets it is still unknown if such an example exists.
While the relation between the two in general remains an open problem, formulated in Question \ref{q:11_11w} below, under the additional assumption \(\mm(\partial E)=0\), we have the equivalence of the two properties. Before proving the latter fact, we observe that, even in this case,  we cannot deduce \(( W^{1,1} )\)-extension property from \(( W^{1,1}_w )\)-extension property via passing to \((s\text{-}BV) \)-extension property:

\begin{example}[\(BV\)-, \(W^{1,1}\)- and \(W^{1,1}_w\)-extension set with \(\mm(\partial E)=0\) but not \(s\text{-}BV\)-extension set]\label{ex:tripod}
Let us consider as our space \(\X\) to be a tripod with the unit length legs \(\X_i\), \(i=1,2,3\), each equipped with the Euclidean distance and the \(1\)-Hausdorff measure. As the set \(E\) take the union of the two legs, i.e.\ \(E=\X_1\cup\X_2\). Then, one can check that \(E\) is a \(W^{1,1}\)-, \(W^{1,1}_w\)- (and thus also \(BV\)-) extension set, but it is not an \(s\text{-}BV\)-extension set. Indeed, consider as a function \(u=\chi_{\X_1}\in BV(E)\). Then, any extension \(Fu\) of \(u\) to the complement of \(E\) in \(\X\) will have positive \(|DFu|\)-measure at the common point of the three legs, in other words \(|DFu|(\partial E)>0\).
\end{example}

Nevertheless, we have the following result.

\begin{proposition}[\(( W^{1,1}_w ) \Longleftrightarrow ( W^{1,1} )\) under \(\mm(\partial E)=0\)] \label{prop:11_11w_relation}
Let \((\X,\sfd, \mm)\) be a metric measure space and let \(E\subset \X\) be a closed nonempty set with \(\mm(\partial E)=0\). Then \(E\) is \(W^{1,1}_w\)-extension set if and only if it is \(W^{1,1}\)-extension set.
\end{proposition}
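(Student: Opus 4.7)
I would prove the two implications separately.

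\emph{Direction $(W^{1,1}_w)\Rightarrow(W^{1,1})$.} This adapts verbatim the smoothing strategy from the proof of $(\text{s-}BV)\Rightarrow(W^{1,1})$ in Proposition~\ref{prop:general_implications}. Given a $W^{1,1}_w$-extension operator $\bar F$ and $u\in W^{1,1}(E)$, the continuous inclusion $W^{1,1}(E)\subseteq W^{1,1}_w(E)$ (since the density of $|Du|_E$ is dominated by the minimal 1-weak upper gradient) makes $\bar F u\in W^{1,1}_w(\X)$ available. The key observation is that $|D\bar F u|\ll\mm$ combined with $\mm(\partial E)=0$ yields $|D\bar F u|(\partial E)=0$ for free, so $\bar F u$ behaves as a \emph{strong} $BV$-extension of $u$. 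Setting $\Omega\coloneqq\X\setminus E$ and $Fu\coloneqq\chi_\Omega T_\varepsilon(\bar F u|_\Omega)+\chi_E u$, with $T_\varepsilon$ from Proposition~\ref{prop:smoothing}, the gluing Lemma~\ref{lemma:fromlocal_to_global_W11} (and Remark~\ref{rmk:lem44} when $\mathrm{int}(E)=\emptyset$) yields $Fu\in W^{1,1}(\X)$; the norm bounds transfer verbatim from the proof of Proposition~\ref{prop:general_implications}, replacing $\|u\|_{BV(E)}$ by $\|u\|_{W^{1,1}_w(E)}\leq\|u\|_{W^{1,1}(E)}$.

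\emph{Direction $(W^{1,1})\Rightarrow(W^{1,1}_w)$.} This direction is subtler, because a $W^{1,1}$-extension $\tilde F$ is defined only on $W^{1,1}(E)$, while an input $u\in W^{1,1}_w(E)$ may fail to lie there. I would mimic the proof of $(W^{1,1})\Rightarrow(BV)$ from Proposition~\ref{prop:general_implications}. After truncating (WLOG $\|u\|_\infty=M<\infty$), approximate by $u_i\in\Lip(\X)|_E$ with $u_i\to u$ in $L^1(E)$, $\int_E\lip_a u_i\,\d\mm\to\|u\|_{L^{1,1}_w(E)}$ and $\|u_i\|_\infty\leq 2M$. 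Each $u_i\in W^{1,1}(E)$ (Lipschitz functions have $\lip_a u_i$ as a 1-weak upper gradient), so $\tilde F u_i\in W^{1,1}(\X)\subseteq W^{1,1}_w(\X)$ has $|D\tilde F u_i|\ll\mm$ and uniformly bounded $W^{1,1}(\X)$-norm. Truncating to $\bar u_i\coloneqq(\tilde F u_i\wedge 2M)\vee(-2M)$ preserves both the $L^\infty$-bound and the absolute continuity of $|D\bar u_i|$. Mazur's lemma in $L^p(\mm)$ for some $p>1$, as in Proposition~\ref{prop:general_implications}, produces convex combinations $w_k\to v$ strongly in $L^p$ with $v\in BV(\X)\cap L^\infty$, $v|_E=u$ and $\|v\|_{BV(\X)}\lesssim\|u\|_{W^{1,1}_w(E)}$. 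A further smoothing of $v$ on $\Omega$ defines the candidate $Fu\coloneqq\chi_\Omega T_\varepsilon(v|_\Omega)+\chi_E u\in BV(\X)$.

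The main obstacle is to upgrade $Fu\in BV(\X)$ to $Fu\in W^{1,1}_w(\X)$, i.e., $|DFu|\ll\mm$. Absolute continuity on $\Omega$ follows from the local Lipschitz regularity of $T_\varepsilon(v|_\Omega)$, and on $\mathrm{int}(E)$, $|DFu|$ coincides with $|Du|_E|_{\mathrm{int}(E)}\ll\mm$ (total variations on open subsets of $\mathrm{int}(E)$ computed in $\X$ agree with those computed intrinsically in $E$). Via the decomposition $Fu=v+\chi_\Omega(T_\varepsilon(v|_\Omega)-v|_\Omega)$ together with property \eqref{eq:notchargingtheboundary}, the remaining issue reduces to $|Dv|(\partial E)=0$. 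Since the usual lower semicontinuity of total variation under $L^1_{\rm loc}$-convergence only controls open sets, I would refine Mazur's selection to additionally enforce $L^1(\mm)$-equi-integrability of the densities $g_i\coloneqq\d|D\bar u_i|/\d\mm$, obtained via a Chacon biting / Dunford--Pettis argument on the uniformly $L^1$-bounded sequence $(g_i)$. Together with $\mm(U_\delta)\downarrow\mm(\partial E)=0$ along open neighborhoods $U_\delta\searrow\partial E$, this yields
\[
|Dv|(\partial E)\leq\lim_{\delta\to 0}\liminf_k\int_{U_\delta}g_{w_k}\,\d\mm=0,
\]
completing the proof.
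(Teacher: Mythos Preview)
Your argument for \((W^{1,1}_w)\Rightarrow(W^{1,1})\) is correct and matches the paper exactly.

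For \((W^{1,1})\Rightarrow(W^{1,1}_w)\), your approach has a genuine gap. You assert that a ``Chacon biting / Dunford--Pettis argument'' on the uniformly \(L^1\)-bounded densities \(g_i=\d|D\bar u_i|/\d\mm\) yields equi-integrability. It does not: uniform \(L^1\)-boundedness does not imply equi-integrability, and Chacon's biting lemma only gives weak \(L^1\)-convergence (hence equi-integrability) \emph{outside} a decreasing sequence of exceptional sets \(B_j\) with \(\mm(B_j)\to 0\). There is no reason the mass \(\int_{B_j}g_{i}\,\d\mm\) should be small, and the \(B_j\) are determined by the sequence, not by \(\partial E\). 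Concretely, nothing prevents the extension operator \(\tilde F\) from producing densities that concentrate like \(n\chi_{\{0<\dist(\cdot,\partial E)<1/n\}}\); then \(\liminf_k\int_{U_\delta}g_{w_k}\,\d\mm\) stays bounded away from zero for every \(\delta>0\), and the weak-\(*\) limit \(|Dv|\) charges \(\partial E\).

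The paper avoids this limiting problem entirely with a much simpler, direct construction. Given \(u\in W^{1,1}_w(E)\), it applies the smoothing operator \(T_\varepsilon\) of Proposition~\ref{prop:smoothing} on \(\mathrm{int}(E)\) (not on the complement) to produce a single function \(\bar u=T_\varepsilon(u|_{\mathrm{int}\,E})\in\Lip_{\rm loc}(\mathrm{int}\,E)\). Its zero extension \(\tilde u\) lies in \(W^{1,1}(E)\) by Lemma~\ref{lemma:fromlocal_to_global_W11}, since \(|D\bar u|\ll\mm\) and \(\mm(\partial E)=0\). One then applies the given \(W^{1,1}\)-extension operator \emph{once} to \(\tilde u\), and sets \(Fu\coloneqq\chi_{\X\setminus\mathrm{int}\,E}\bar F\tilde u+\chi_{\mathrm{int}\,E}u\). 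The decomposition \(Fu=\chi_{\mathrm{int}\,E}(u-\bar u)+\bar F\tilde u\) shows \(|DFu|\ll\mm\) directly: the first summand is in \(W^{1,1}_w(\X)\) by property~\eqref{eq:notchargingtheboundary} of \(T_\varepsilon\), and the second because \(\bar F\tilde u\in W^{1,1}(\X)\subset W^{1,1}_w(\X)\). No approximation, no compactness, and no control of a limit on \(\partial E\) is needed.
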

\begin{proof}\ \\
\textsc{Proof of \(( W^{1,1}_w ) \Longrightarrow ( W^{1,1} )\).} Assume that \(E\) is a \(W^{1,1}_w\)-extension set and denote by \(\bar F\) the associated extension operator. Let \(u\in W^{1,1}(E)\subset W^{1,1}_w(E)\). We then proceed as in the first part of the proof of Proposition \ref{prop:general_implications}. Given \(\varepsilon>0\) we define 
\(u_1\coloneqq T_\varepsilon(\bar F u\restr{\Omega})\) with \(\Omega\coloneqq 
\X
\setminus E\) and \(u_2\coloneqq u\). We set 
\(Fu\coloneqq \chi_{\X\setminus E} u_1+\chi_E u_2\), which turns out to be a desired \(W^{1,1}\)-extension operator. The proof follows the same steps as in Proposition \ref{prop:general_implications}, wherein the possibility to apply Lemma \ref{lemma:fromlocal_to_global_W11} is given by the fact that \(|D\bar Fu|\ll \mm\) and thus \(|D\bar Fu|(\partial E)=0\), due to the hypothesis.\\

\noindent
{\sc Proof of \(( W^{1,1} ) \Longrightarrow ( W^{1,1}_w )\).} 
Assume that \(E\) is a \(W^{1,1}\)-extension set and denote by \(\bar F\) the associated extension operator. Let \(u\in W^{1,1}_w(E)\subset BV(E)\).
Fix \(\varepsilon>0\) and let \(T_\varepsilon\colon BV({\rm int}\, E)\to \Lip_{\rm loc}({\rm int}\,E)\) be the smoothing operator from Proposition \ref{prop:smoothing}. Define \(\bar u\coloneqq T_\varepsilon (u|_{{\rm int\,}E})\in \Lip_{\rm loc}({\rm int}\,E)\subset W^{1,1}({\rm int}\, E)\).
Let us consider the function \(\tilde u\in BV((\X, \sfd, \mm\restr{E}))\) defined as the zero extension of $\bar u$ from ${\rm int}\,E$ to $\X$. 
Given that \(|D\tilde u|(\partial E)=|D\bar u|(\partial E)=0\), since \(|D\bar u|\ll \mm\), we can apply Lemma \ref{lemma:fromlocal_to_global_W11} and deduce that 
\(\tilde u\in W^{1,1}((\X,\sfd, \mm|_E))=W^{1,1}(E)\) and it holds that 
\[
|D\tilde u|_{1,E}\leq \chi_{{\rm int}\,E}|D\bar u|_{1,{\rm int}\, E}\quad \mm\text{-a.e.\ in }  E.
\]
Then, we define 
\[
Fu\coloneqq \chi_{\X\setminus {\rm int}\,E}\bar F\tilde u + \chi_{{\rm int}\, E}\, u\in BV(\X),
\]
where \(u\) is understood to be extended to \(0\) in \(\X\setminus E\).
Notice that \(Fu=u\) holds \(\mm\)-a.e.\ on \(E\) (recall \(\mm(\partial E)=0\)). We claim that \(Fu\in W^{1,1}_w(\X)\). Indeed, 
we can write \(Fu=\chi_{{\rm int}\,E}(u\restr{{\rm int}\, E}-\bar u)+\bar F\tilde u\). By the definition of \(\bar u\) and Proposition \ref{prop:smoothing}, we have that \(u\restr{{{\rm int}\,E}}-\bar u\in W^{1,1}_w(\X)\) (when both \(u|_{{\rm int}\, E}\) and \(\bar u\) are understood as extended to \(0\) in \(\X\setminus{\rm int}\,E\)).
Since also \(|D\bar F\tilde u|\ll \mm\), we   deduce that \(Fu\in W^{1,1}_w(\X)\). It remains to verify the norm bounds. First, we have that 
\[
\|Fu\|_{L^{1}(\mm)}\leq \|\bar F\tilde u\|_{L^1(\mm)}+\|u\|_{L^1(E)} \leq C_{\bar F}\|\tilde u\|_{W^{1,1}(E)} +\|u\|_{W^{1,1}(E)}.
\]
Then, using the norm bounds for the smoothing operator in Proposition \ref{prop:smoothing} we get 
\[
\|\tilde u\|_{W^{1,1}(E)}=\|T_\varepsilon(u|_{{\rm int}\,E})\|_{W^{1,1}({\rm int}\,E)}\leq \|u\|_{L^1(E)}+\varepsilon+C\big(|Du|_E(E)+\varepsilon\big)\leq (C+1) \|u\|_{BV(E)}+(C+1)\varepsilon.
\]
Together with the above, this gives
\[
\|Fu\|_{L^1(\mm)} \leq C_{\bar F}(C+2) \|u\|_{BV(E)}+C_{\bar F}(C+1)\varepsilon.
\]
Moreover, taking into account \eqref{eq:from_B_to_X}, we have 
\[
\begin{split}
|DFu|(\X)\leq &|D\bar F\tilde u|(\X)+|Du|({\rm int}\,E)\leq C_{\bar F}\|\tilde u\|_{W^{1,1}(E)}+\|u\|_{BV(E)}\\
\leq  &\big(C_{\bar F}(C+2)+1\big) \|u\|_{BV(E)}+ C_{\bar F}(C+2)\varepsilon.
\end{split}
\]
Choosing \(\varepsilon\) to be precisely \(\|u\|_{BV(E)}\) we conclude the proof.
\end{proof}

\begin{question}\label{q:11_11w}
Let $E \subset X$ be a nonempty closed set with \(\mm(\partial E)>0\). Does $(W^{1,1}) \Rightarrow (W_w^{1,1})$ hold for $E$? Does $(W_w^{1,1}) \Rightarrow (W^{1,1})$ hold for $E$?
\end{question}

\section{Extension sets in PI spaces}\label{sec:PI}

In this section we move to the setting of PI spaces where the extension sets have more properties than in general metric measure spaces. We will also give examples showing that the results fail in general metric measure spaces.

\subsection{Measure density}
Let us start with the results for PI spaces regarding the measure density of general extension sets.
In PI spaces, a measure density result was proven for $W^{1,p}$-extension domains in \cite{HKT2008b} under the additional assumption that the space is Ahlfors regular. 
A measure density was also studied for BV-extensions in \cite{KoskelaMirandaShanmugalingam2010,GarciaBravoRajala2024}, respectively for the homogeneous $BV$ space in the planar case and for the full norm in the general Euclidean case.

In Proposition \ref{prop:measdens} we provide a measure density result in PI setting 
where we remove the extra assumption of Ahlfors regularity of the measure with the help of the following direct consequence of the doubling property, proven for example in \cite[Lemma 3.7]{KorteLahti2014}.
%

\begin{lemma}\label{lma:boundaryzero}
Let $(\X,\sfd)$ be a length space and $\mm$ a doubling measure on $\X$. Then, for every $x\in \X$ and $r>0$, $\mm(S(x,r)) = 0$ where $S(x,r) = \{y \in\X\,:\, \sfd(x,y) = r\} \subset \X$.
\end{lemma}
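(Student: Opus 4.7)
The plan is to argue by contradiction. Suppose, for some $x\in \X$ and $r>0$, that $\mm(S(x,r))>0$. Since $\mm$ is doubling, the Vitali-type covering theorem and the associated Lebesgue differentiation theorem are available for $\mm$ on $\X$ (see, e.g., \cite[Section 3.4]{HKST15}). Consequently, $\mm$-almost every $y\in S(x,r)$ is a point of density one of $S(x,r)$, and in particular there exists such a point $y\in S(x,r)$ satisfying
\[
\lim_{\delta\to 0^+}\frac{\mm(B(y,\delta)\cap S(x,r))}{\mm(B(y,\delta))}=1.
\]

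The second ingredient is to exploit the length-space hypothesis to produce, near $y$, a ball of definite relative measure that lies entirely in $B(x,r)$ and is therefore disjoint from $S(x,r)$. Fix a small $\delta>0$ and pick $\eta\in(0,\delta/10)$. Since $\X$ is a length space, there is a curve $\gamma$ from $x$ to $y$ of length at most $r+\eta$; reparametrizing $\gamma$ by arc length and setting $z\coloneqq \gamma(r-\delta/2)$, we obtain a point $z$ with $\sfd(x,z)\leq r-\delta/2$ and $\sfd(y,z)\leq \eta+\delta/2\leq 3\delta/5$. Any $w\in B(z,\delta/10)$ then satisfies $\sfd(x,w)\leq r-\delta/2+\delta/10<r$, so $B(z,\delta/10)\cap S(x,r)=\emptyset$, while the triangle inequality also gives $B(z,\delta/10)\subset B(y,\delta)$.

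To conclude, I would use the doubling property to control $\mm(B(y,\delta))$ by $\mm(B(z,\delta/10))$. Indeed, $B(y,\delta)\subset B(z,8\delta/5)$ and $8\delta/5\leq 16(\delta/10)$, so iterating the doubling inequality a fixed (doubling-constant dependent) number of times yields a constant $C>0$ such that $\mm(B(y,\delta))\leq C\,\mm(B(z,\delta/10))$ for all sufficiently small $\delta$. Combining with the previous step,
\[
\frac{\mm(B(y,\delta)\cap S(x,r))}{\mm(B(y,\delta))}\leq 1-\frac{\mm(B(z,\delta/10))}{\mm(B(y,\delta))}\leq 1-\frac{1}{C}.
\]
Letting $\delta\to 0^+$, the left-hand side tends to $1$ by the choice of $y$, while the right-hand side is bounded away from $1$; this contradiction proves $\mm(S(x,r))=0$.

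The main obstacle I anticipate is largely bookkeeping: choosing the auxiliary radii ($\delta/2$, $\delta/10$, $3\delta/5$, $8\delta/5$) so that the inclusions $B(z,\delta/10)\subset B(y,\delta)\subset B(z,8\delta/5)$ and the strict inequality $\sfd(x,w)<r$ both hold with room to spare, and ensuring that the Lebesgue differentiation step truly applies to the doubling measure $\mm$ (which it does thanks to the standard Vitali covering property for doubling metric measure spaces).
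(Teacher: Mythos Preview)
Your argument is correct. The paper itself does not prove this lemma; it merely cites it as \cite[Lemma 3.7]{KorteLahti2014}, so there is no ``paper's own proof'' to compare against beyond that reference. Your approach---picking a Lebesgue density point $y$ of $S(x,r)$ (available since $\mm$ is doubling and $S(x,r)$ is closed, hence measurable), then using the length-space hypothesis to produce a nearby point $z$ strictly inside $B(x,r)$ so that a definite-fraction sub-ball $B(z,\delta/10)\subset B(y,\delta)$ misses the sphere---is the standard one, and your bookkeeping with the radii checks out: $B(z,\delta/10)\subset B(y,\delta)\subset B(z,8\delta/5)$ and $\sfd(x,w)<r$ for $w\in B(z,\delta/10)$ all hold as you claim, and four iterations of doubling give the comparison $\mm(B(y,\delta))\le C_\mm^4\,\mm(B(z,\delta/10))$.
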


\begin{proposition}\label{prop:measdens}

Let \((\X, \sfd, \mm)\) be a PI space and $E \subset \X$ a bounded $BV$-extension set with $\mm(E)>0$. Then there exists a constant $C > 0$ so that for all $x \in \overline{X \setminus E^0}$ and $0 < r < {\rm diam}(E)$ we have
\[
\mm(B(x,r)\cap E) \ge C \mm(B(x,r)).
\]
\end{proposition}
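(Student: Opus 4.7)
The plan is to argue by contradiction. Fix $x_0\in \overline{\X\setminus E^0}$ and $r_0\in (0,\diam(E))$ and suppose that $\mm(B(x_0,r_0)\cap E) < \varepsilon\, \mm(B(x_0,r_0))$ for a small $\varepsilon>0$ whose value will be fixed at the end, depending only on the doubling and Poincar\'e constants, $\diam(E)$, and the $BV$-extension norm $C_T$ of $E$, but not on $r_0$.

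\textbf{Test function and extension.} I would take a Lipschitz cutoff $\varphi\colon \X\to [0,1]$ with $\varphi\equiv 1$ on $B(x_0,r_0/2)$, $\varphi\equiv 0$ outside $B(x_0,r_0)$, and $\Lip(\varphi)\le 2/r_0$, and set $u:=\varphi|_E\in BV(E)$. The assumed smallness of $\mm(B(x_0,r_0)\cap E)$ yields
\[
\|u\|_{L^1(E)}<\varepsilon\,\mm(B(x_0,r_0)),\qquad |Du|_E(E) < \frac{2\varepsilon}{r_0}\mm(B(x_0,r_0)).
\]
The $BV$-extension gives $\tilde u=Tu\in BV(\X)$ with $\|\tilde u\|_{BV(\X)}\le C_T\|u\|_{BV(E)}$. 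Crucially, $\tilde u=1$ on $E\cap B(x_0,r_0/2)$ and $\tilde u=0$ on $E\setminus B(x_0,r_0)$.

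\textbf{Locating positive density and applying Proposition~\ref{prop:Sobo}.} Using the hypothesis $x_0\in\overline{\X\setminus E^0}$, I would select $y\in B(x_0,r_0/4)$ and a scale $\rho<r_0/4$ with $\mm(B(y,\rho)\cap E)\ge c_0\,\mm(B(y,\rho))$, where $c_0$ comes from the positive upper density of $E$ at a point of $\X\setminus E^0$ close to $x_0$. Then $B(y,\rho)\subset B(x_0,r_0/2)$ and $\tilde u\equiv 1$ on $B(y,\rho)\cap E$. Applying Proposition~\ref{prop:Sobo} on $B=B(x_0,r_0)$, the quantity $r_0|D\tilde u|(5\lambda B)/\mm(5\lambda B)$ is bounded by a constant (depending on $\diam(E)$ and $C_T$) times $\varepsilon$, using $r_0<\diam(E)$ and doubling. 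Since also the average $\tilde u_B$ is controlled by the same constant times $\varepsilon$, for $\varepsilon$ sufficiently small we have $|\tilde u-\tilde u_B|\ge 1/2$ on $B(y,\rho)\cap E$, and the weak Sobolev-Poincar\'e estimate yields
\[
\frac{\mm(B(y,\rho)\cap E)}{\mm(B(x_0,r_0))}\le C\,\varepsilon^{s/(s-1)}.
\]
Combining with $\mm(B(y,\rho)\cap E)\ge c_0\mm(B(y,\rho))$ and the doubling comparison between $\mm(B(y,\rho))$ and $\mm(B(x_0,r_0))$ then forces a contradiction once $\varepsilon$ is taken small enough, completing the argument.

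\textbf{Main obstacle.} The delicate point is that the $BV$-norm of the cutoff carries a factor $1/r_0$, so the naive bounds on $\tilde u_B$ and on $r_0|D\tilde u|/\mm$ involve $\varepsilon(1+1/r_0)$; producing a universal threshold $\varepsilon$ independent of $r_0$ requires carefully balancing the choice of $\rho$ against $r_0$ using the doubling inequality \eqref{eq:s_doubling}, and possibly a Whitney-type or iterative refinement so that the scale mismatch is always absorbed into constants depending only on $\diam(E)$, $C_\mm$, and $C_T$. This scale bookkeeping is the technical heart of the proof.
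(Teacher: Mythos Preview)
Your proposal correctly identifies the relevant ingredients (Lipschitz cutoffs, the extension operator, and the weak Sobolev--Poincar\'e inequality of Proposition~\ref{prop:Sobo}), and you are right that the $1/r_0$ blow-up in the $BV$-norm is the crux. However, the proposal stops precisely where the real work begins: you note that an ``iterative refinement'' is probably needed to absorb the scale mismatch, but you do not supply one. As it stands the argument has two genuine gaps. First, the density parameter $c_0$ and the scale $\rho$ you obtain from $x_0\in\overline{\X\setminus E^0}$ are \emph{not} uniform: a point of $\X\setminus E^0$ merely has positive upper density, which can be arbitrarily small, and the scale at which it is achieved can be arbitrarily small relative to $r_0$. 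So the inequality $c_0(\rho/r_0)^s\le C\varepsilon^{s/(s-1)}$ you are aiming for gives no contradiction. Second, even granting a uniform $c_0$, the average $\tilde u_B$ is only bounded by $C_T\|u\|_{BV(E)}/\mm(B)\lesssim \varepsilon(1+2/r_0)$, which is not small for small $r_0$; you acknowledge this but do not resolve it.

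The paper's proof bypasses both issues by abandoning the contradiction scheme and using a direct telescoping argument due to \cite{HKT2008b,BR21}. One fixes $x\in\X\setminus E^0$, sets $r_0=r$, and chooses radii $r_i\searrow 0$ so that $\mm(E\cap B(x,r_i))=2^{-i}\mm(E\cap B(x,r_0))$; this is possible precisely because $x\notin E^0$ and spheres are $\mm$-null in a PI space. The test function $u_i$ is then the cutoff that equals $1$ on $B(x,r_i)\cap E$ and vanishes outside $B(x,r_{i-1})\cap E$, so its Lipschitz constant is $(r_{i-1}-r_i)^{-1}$ and its $BV(E)$-norm is controlled by $(r_{i-1}-r_i)^{-1}\mm(E\cap B(x,r_i))$. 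Applying Proposition~\ref{prop:Sobo} to $Tu_i$ on $B(x,r_{i-2})$ and rearranging yields
\[
\frac{r_{i-1}-r_i}{r}\;\le\; C\,2^{-i/s}\left(\frac{\mm(E\cap B(x,r))}{\mm(B(x,r))}\right)^{1/s}.
\]
Summing over $i$ turns the left side into $1$, which gives the measure density directly. The telescoping is exactly the ``iterative refinement'' you guessed at; note in particular that it never requires locating an auxiliary small ball of definite density, and that the factor $(r_{i-1}-r_i)^{-1}$ cancels between the two sides before summation, which is what kills the problematic $1/r_0$.
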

\begin{proof}
{\color{blue}{\sc Step 1:}}
We first prove the theorem for $x \in \X \setminus E^0$.
The proof follows closely the proofs of \cite[Theorem 2]{HKT2008b} and \cite[Proposition 2.3]{BR21}. Since a few minor modifications are needed, we repeat most of the proof for readability.
Since we are working in a PI space, we can assume without loss of generality that the distance \(\sfd\) on the space is geodesic, see \cite[Corollary 8.3.16]{HKST15}.
Now fix any \(x\in \X \setminus E^0\) and \(r\in (0,{\rm diam}(E))\). Let us also define \(r_0\coloneqq r\).
We point out that the constant \(C\) appearing throughout the proof is universal, but may change from line to line.
By Lemma \ref{lma:boundaryzero} we have $\mm(\partial B)=0$ for every ball $B \subset \X$, which implies that
\begin{equation*}
    (0,\infty) \ni r \mapsto \mm(E \cap B(x,r)) \in \mathbb{R}\quad\text{is continuous.}
\end{equation*}
Thus, by induction, we are able to define for every $i \in \mathbb N$ the radius $r_i \in (0,r_{i-1})$ by the equality
\[
\mm(E\cap B(x,r_i)) = \frac12\mm(E\cap B(x,r_{i-1})) = 2^{-i}\mm(E\cap B(x,r_0)).
\]
Since $x \in  X \setminus E^0$, we have that $r_i \searrow 0$ as $i\to \infty$.

For each $i \in\mathbb N$, consider the function $u_i\colon E \to \mathbb{R}$
\[
 u_i(y) \coloneqq \begin{cases}1, & \text{for }y \in B(x,r_i) \cap E,\\
 \frac{r_{i-1}-\sfd(x,y)}{r_{i-1}-r_i}, & \text{for }y \in (B(x,r_{i-1})\setminus B(x,r_i)) \cap E,\\
 0, & \text{otherwise}.
 \end{cases}
\]
Since $u_i$ is the restriction of a globally defined Lipschitz function and $\mm(E)<\infty$ is $E$ is boundedly finite, by the definition of a $BV$ space, we have that $u_i \in BV(E)$. Moreover
\begin{equation}
\label{eq:bound_BV_norm_bump_function}
\begin{aligned}
\Vert u_i\Vert_{BV(E)} &=\Vert u_i\Vert_{L^1(E)} +|D  u_i|_E(E)  \le \int_{E} |u_i|\,\d \mm+  \int_{E} \lip_a(u_i)\,\d \mm \\
& \le  \mm(B(x,r_{i-1})\cap E) + |r_i - r_{i-1}|^{-1} \mm((B(x,r_{i-1})\setminus B(x,r_i))\cap E) \\
&\leq C|r_i - r_{i-1}|^{-1} \mm(E\cap B(x,r_i)).
\end{aligned}
\end{equation}
Let us denote by $T\colon BV(E)\to BV(\X)$ the extension operator.

By Proposition \ref{prop:Sobo}, there are constants \(C>0\) and \(\lambda\geq 1\) so that
\begin{equation}\label{eq:superlevelsetest}
\frac{\mm(\{z \in B(x,r_{i-2})\,:\,|T u_i(z) - (T u_i)_{B(x,r_{i-2})}|>\frac12\})(\frac12)^{\frac{s}{s-1}}}{\mm(B(x,r_{i-2}))} \le C\left(r_{i-2} \frac{|D (T u_i)|(B(x,5\lambda r_{i-2}))}{\mm(B(x,5\lambda r_{i-2}))} \right)^{\frac{s}{s-1}},
\end{equation}
with \(s\) being as in \eqref{eq:s_doubling}.

We claim that we have
\begin{equation}
    \label{eq:percentage_ball_bounded_by_oscillation}
\mm(E\cap B(x,r_i)) \le C
\mm\left(\left\{z \in B(x,r_{i-2})\,:\,|T u_i(z) - (T u_i)_{B(x,r_{i-2})}|>\frac12\right\}\right).
\end{equation}
Indeed, by the definition of $r_i$, there exists a constant $C>0$ such that 
\begin{equation}
\label{eq:comparing_two_sets}
    \mm(E\cap B(x,r_i))\le C \mm(E \cap (B(x,r_{i-2})\setminus B(x,r_{i-1}))).
\end{equation}
We follow the argument in \cite[Theorem 2]{HKT2008b}. Since $u_i=1$ on $B(x,r_i) \cap E$ and $u_i=0$ on $(B(x,r_{i-2})\setminus B(x,r_{i-1})) \cap E$, we have $|T u_i(z)-(T u_i)_{B(x,r_{i-2})}| >\frac{1}{2}$ for every point in either one of these two sets. This implies
\begin{equation}
\label{eq:dichotomy_oscillation}
\begin{aligned}
    \min\{\mm(E\cap B(x,r_i)),\, &\mm(E \cap (B(x,r_{i-2})\setminus B(x,r_{i-1})))\} \\
    &\le
\mm\left(\left\{z \in B(x,r_{i-2})\,:\,|T u_i(z) - (T u_i)_{B(x,r_{i-2})}|>\frac12\right\}\right).\\
\end{aligned}
\end{equation}
Combining \eqref{eq:comparing_two_sets} with \eqref{eq:dichotomy_oscillation}, we get \eqref{eq:percentage_ball_bounded_by_oscillation}.

Since $T$ is a $BV$-extension set, we have
\begin{equation}
\label{eq:upper_bound_asymptotic_Lipschitz constant}
\frac{|D (T u_i)|(B(x,5\lambda r_{i-2}))}{\mm(B(x,5\lambda r_{i-2}))} \le \frac{|D (T u_i)|(\X)}{\mm(B(x,5\lambda r_{i-2}))} \le \frac{\|T\| \Vert u_i\Vert_{BV(E)}}{\mm(B(x,5\lambda r_{i-2}))}.
\end{equation}
From \eqref{eq:s_doubling} we obtain
\begin{equation}
\label{eq:growth_volume_small_balls_doubling}
\frac{r_{i-2}}{\left(\mm(B(x,r_{i-2}))\right)^{\frac{1}{s}}} \le C\frac{r}{\left(\mm(B(x,r))\right)^{\frac{1}{s}}}.
\end{equation}
Combining the above inequalities we have
\begin{align*}
\left(\mm(E\cap B(x,r_i))\right)^{1-\frac{1}{s}}
& \stackrel{\eqref{eq:superlevelsetest},\eqref{eq:percentage_ball_bounded_by_oscillation}}{\le} C \left(\mm(B(x,r_{i-2}))\right)^{1-\frac{1}{s}}r_{i-2} \frac{|D (T u_i)|(B(x,5\lambda r_{i-2}))}{\mm(B(x,5\lambda r_{i-2}))}\\
& \stackrel{\eqref{eq:upper_bound_asymptotic_Lipschitz constant}}{\le} C \frac{r_{i-2}}{\left(\mm(B(x,r_{i-2}))\right)^{\frac{1}{s}}} 2\|T\| \Vert u_i\Vert_{BV(E)}\\
& \stackrel{\eqref{eq:growth_volume_small_balls_doubling}}{\le} C\frac{r}{\left(\mm(B(x,r))\right)^{\frac{1}{s}}} 2\|T\| \Vert u_i\Vert_{BV(E)}\\
& \stackrel{\eqref{eq:bound_BV_norm_bump_function}}{\le} C\|T\| \frac{r}{\left(\mm(B(x,r))\right)^{\frac{1}{s}}} |r_i - r_{i-1}|^{-1} \mm(E\cap B(x,r_i)).
\end{align*}
Arranging the terms in the inequality above, we have
\[
 \left(\mm(B(x,r))\right)^{\frac{1}{s}}
 \frac{|r_i - r_{i-1}|}r \le C\|T\| (\mm(E\cap B(x,r_i)))^\frac{1}{s} = C\|T\|2^{-i/s} (\mm(E\cap B(x,r_0)))^\frac{1}{s}.
\]

By summing up all these quantities we conclude that
\begin{align*}
  \left(\mm(B(x,r))\right)^{\frac{1}{s}} 
  & = \left(\mm(B(x,r))\right)^{\frac{1}{s}} \frac{r}{r} = \sum_{i=1}^\infty
   \left(\mm(B(x,r))\right)^{\frac{1}{s}}
 \frac{|r_i - r_{i-1}|}r \\
 & \le C\|T\|\sum_{i=1}^\infty 2^{-i/s}\mm(E\cap B(x,r_0))^\frac{1}{s} = C\|T\|\mm(E\cap B(x,r_0))^\frac{1}{s}.
\end{align*}
This gives the claimed inequality.

{\color{blue}{\sc Step 2:}} We now prove that the result holds for every point in $\overline{\X \setminus E^0}$. To this end  consider 
$x \in \overline{\X \setminus E^0} $ and let $ x_k \in \overline{\X \setminus E^0}$ be such that $x_k$ converges to $x$. By Step 1 we have that 
\begin{equation*}
     \mm(B(x_k,r)\cap E) \ge C \mm(B(x_k,r))\qquad \text{for every }0 < r < {\rm diam}(E)\text{ and }k \in \mathbb{N}.
\end{equation*}
Since $\lims_{k \to \infty} |\mm(B(x_k,r) \cap E) - \mm(B(x,r) \cap E)| \le \mm(\partial B(x,r))$ and $\lims_{k \to \infty} |\mm(B(x_k,r)) - \mm(B(x,r))| \le \mm(\partial B(x,r))$, by letting $k$ go to $\infty$ and applying Lemma \ref{lma:boundaryzero} we conclude. 
\end{proof}

The next example shows that the measure density fails in general metric measure spaces.

\begin{example}\label{ex:sierpinski}
Proposition \ref{prop:measdens} does not hold if we remove the assumption that $(\X,\sfd,\mm)$ is a PI space.
Let $\X = \mathbb R$ with the Euclidean distance $\sfd$, and $\mm = \rho\mathcal L^1$ with
\[
\rho(x) = \begin{cases}
1, & \text{if }x \notin [0,1]\\
\min(x,1-x), & \text{if }x \in [0,1].
\end{cases}
\]
Then $[0,1]$ is a $BV$-extension set in $(\X,\sfd,\mm)$. Moreover $\R \setminus E^0=(0,1)$, so $0 \in \overline{\R \setminus E^0}$ but
\[
\lim_{r \to 0}\frac{\mm(B(0,r)\cap [0,1])}{\mm(B(0,r))} = 0.
\]

\end{example}

\begin{remark}[Measure density and measure of the boundary]
For open $BV$-extension sets $E \subset \X$ in PI spaces, Proposition \ref{prop:measdens} implies that $\mm(\partial E) = 0$. For general $BV$-extension sets $E$ in PI spaces the conclusion $\mm(\partial E) = 0$ need not hold. 

A trivial example of this is to consider $E = \mathbb D \setminus \mathbb Q^2 \subset \mathbb R^2$, with \(\R^2\) being endowed with the Euclidean distance and the Lebesgue measure. However, in this case $E$ has a closed representative with $\mm(\partial E) = 0$ and this closed representative is also a $BV$-extension set (by Proposition \ref{prop_closedrepr}). A less trivial and more satisfying example of an extension set with positive boundary is given by a fat Sierpi\'nski carpet in the plane, as proved in \cite{GarciaBravoRajala2024}.
\end{remark}

\subsection{From homogeneous norm to full norm}

 In the case of PI spaces we have that bounded $\ca{BV}$-extension sets are $BV$-extension sets. For a similar proof in the Euclidean setting for Sobolev functions, see the proof of \cite[Theorem 4.4]{HerronKoskela}.

 \begin{proposition}\label{prop:homBVtofullBV}
 Let $(\X,\sfd,\mm)$ be a PI space and $E$ a bounded $\ca{BV}$-extension set. Then $E$ is a $BV$-extension set.
 \end{proposition}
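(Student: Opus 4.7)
The only obstacle to upgrading a $\ca{BV}$-extension into a $BV$-extension is the absence of an $L^1$ bound on the extension, and the plan is to recover this via the $(1,1)$-Poincar\'e inequality on a large ball combined with a Lipschitz cutoff. Since $E$ is bounded and we may assume $\mm(E)>0$ (otherwise $BV(E)=\{0\}$ and there is nothing to prove), fix $x_0\in E$ and radii $R_1<R_2$ with $E\subset B(x_0,R_1)$, and write $B_i\coloneqq B(x_0,R_i)$.

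Given $u\in BV(E)$, the first step is to normalize: set $c\coloneqq \frac{1}{\mm(E)}\int_E u\,\d\mm$ and $v\coloneqq u-c$. Then $v\in\ca{BV}(E)$ with $|Dv|_E(E)=|Du|_E(E)$ and $\int_E v\,\d\mm=0$. Applying the hypothesized homogeneous extension operator to $v$ produces $Tv\in\ca{BV}(\X)$ satisfying $Tv=v$ $\mm$-a.e.\ on $E$ and $|D(Tv)|(\X)\le C|Du|_E(E)$.

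Next, I would control $Tv$ on $B_2$ by testing the $BV$-Poincar\'e inequality \eqref{eq:poincare_BV_rem4.1} against the subset $E\subset B_2$. Using $\int_E Tv\,\d\mm=0$,
\[
|(Tv)_{B_2}|\,\mm(E)\le\int_E\bigl|Tv-(Tv)_{B_2}\bigr|\,\d\mm\le CR_2|D(Tv)|(\lambda B_2)\le CR_2|Du|_E(E),
\]
so that the Poincar\'e estimate applied over the whole of $B_2$ yields $\|Tv\|_{L^1(B_2)}\le C_{E,R_2}\,|Du|_E(E)$.

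Finally, I would localize by a Lipschitz cutoff $\eta\colon\X\to[0,1]$ with $\eta\equiv 1$ on $B_1$, $\eta\equiv 0$ outside $B_2$, and $\Lip(\eta)\le C/(R_2-R_1)$, and define $Fu\coloneqq \eta(Tv+c)$. Since $\eta\equiv 1$ on $E\subset B_1$, one has $Fu=u$ on $E$, and the bound $\|Fu\|_{L^1(\mm)}\le C\|u\|_{BV(E)}$ is immediate from the previous step together with $|c|\mm(B_2)\lesssim \|u\|_{L^1(E)}$. For the total variation one applies the standard Leibniz-type bound
\[
|D(\eta g)|(\X)\le \int_\X|\eta|\,\d|Dg|+\Lip(\eta)\int_\X|g|\,\d\mm\qquad \text{for } g\in\ca{BV}(\X),\ \eta\ \text{Lipschitz},
\]
to $g=Tv+c$, obtaining $|D(Fu)|(\X)\le C\|u\|_{BV(E)}$. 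This product rule --- which I expect to be the one point requiring explicit justification --- follows from the Miranda definition of $|Dg|$ via the pointwise inequality $\lip(\eta g_n)\le|\eta|\lip(g_n)+|g_n|\Lip(\eta)$ along an approximating sequence $g_n\to g$ in $L^1_{\loc}$, combined with lower semicontinuity of total variation. The remaining steps are a clean chain of Poincar\'e-based estimates with constants depending on $E$ and $\X$ but not on $u$.
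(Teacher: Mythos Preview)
Your proof is correct and follows a genuinely different route from the paper's. Both arguments share the same core: use the Poincar\'e inequality on a large ball, together with the fact that the extension agrees with the mean-zero function $v$ on $E$, to control $\|Tv\|_{L^1(B_2)}$ by $|D(Tv)|(\X)$. The difference is in how the extension is then made global in $BV(\X)$. The paper invokes two external results: it cites \cite{Rajala2021} to produce a bounded uniform domain $\Omega$ with $E\subset\Omega\subset B(x,r+1)$, and then \cite{Panu} to conclude that $\Omega$ is itself a $BV$-extension domain for the full norm, so that the composition $BV(E)\to BV(\Omega)\to BV(\X)$ gives the desired operator. You instead multiply by a Lipschitz cutoff, which is more elementary and entirely self-contained. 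One minor caveat: the Leibniz inequality you write with $\int|\eta|\,\d|Dg|$ on the right is a bit stronger than what your approximation argument actually yields (the integrals $\int|\eta|\lip g_n$ need not converge to $\int|\eta|\,\d|Dg|$), but since $0\le\eta\le 1$ the cruder bound $|D(\eta g)|(\X)\le |Dg|(\X)+\Lip(\eta)\|g\|_{L^1(B_2)}$ follows immediately from the same argument and is all you need.
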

 \begin{proof}
 Let $T \colon \ca{BV}(E) \to \ca{BV}(\X)$ be the extension operator. 
 Since $E$ is bounded, there exists a ball $B(x,r)$ such that $E\subset B(x,r)$. By \cite{Rajala2021}, there exists a uniform domain $\Omega \subset B(x,r+1)$ with $E \subset \Omega$. By \cite{Panu} (see also \cite{JanaNages}), $\Omega$ is a $BV$-extension set with respect to the full norm. Thus, we only need to check that $T \colon BV(E) \to BV(\Omega)$ is also an extension operator with respect to the full norm.

Let $u \in BV(E)$.
Then, by the $(1,1)$-Poincar\'e inequality \eqref{eq:poincare_BV_rem4.1} computed on the ball $B(x,r+1)$ and the fact that $T u= u$ on $E$, we have 

\begin{equation}
\label{eq:poincare_with_average_on_E}
    \int_{B(x,r+1)}|Tu-u_E|\,\d\mm \le \frac{2\mm(B(x,r+1))}{\mm(E)}C(r+1) |D(Tu)|(B(x,\lambda(r+1))),
\end{equation}
where $\lambda$ is the constant appearing in the definition of a PI space.
Therefore,
\begin{align*}
 \int_\Omega|Tu|\,\d\mm &\le  \int_{B(x,r+1)}|Tu|\,\d\mm
 \le \int_{B(x,r+1)}|Tu-u_E|\,\d\mm + \int_{B(x,r+1)}|u_E|\,\d\mm\\
 & \stackrel{\eqref{eq:poincare_with_average_on_E}}{\le} C(r+1) |D(Tu)|(B(x,\lambda(r+1))) + {\mm(B(x,r+1))}|u_E|.
\end{align*}
Hence,
\begin{align*}
\|Tu\|_{BV(\Omega)} & \le \int_{\Omega} |Tu|\,\d\mm + |D(T u)|(B(x,r+1)) \le (C(r+1)+1)|D(Tu)|(\X)+ {\mm(B(x,r+1))}|u_E|\\
& \le (C(r+1)+1)\|T\||Du|_E(E) + \frac{\mm(B(x,r+1))}{\mm(E)}\int_E|u|\,\d\mm \le \tilde{C} \|u\|_{BV(E)},
\end{align*}
proving the claim. 
\end{proof}

 The converse to Proposition \ref{prop:homBVtofullBV} does not hold even in the Euclidean setting. For instance, consider in $(\mathbb{R}^2,\sfd_e,\mathcal{L}^2)$ the set $E = B((0,0),1) \cup B((3,0),1)$. Then $E$ is a $BV$-extension set, but not a $\ca{BV}$-extension set.

 Moreover, in general metric measure spaces, the conclusion of Proposition \ref{prop:homBVtofullBV} need not hold.

\begin{example}[Weighted Hawaiian earring]
\label{example:earring}
 Let us give an example of a compact $\ca{BV}$-extension set that is not a $BV$-extension set.
The space is constructed by gluing together infinitely many scaled copies of $\mathbb S^1$.

Let us denote the copies by $S_i = 2^{-i}\mathbb S^1$ and select a point $x_i \in S_i$ for all $i \in \mathbb N$.
Let us denote the copies by $S_i = 2^{-i}\mathbb S^1$ obtained as the quotient $\faktor{\mathbb R}{2^{-i+1}\pi\mathbb Z}$ and write $x_i = [0]_i$ for all $i \in \mathbb N$, where $[\cdot]_i$ denotes the equivalence class under the quotient.
The metric space $(\X,\sfd)$ is then the wedge sum of $S_i$ where all the points $x_i$ are identified with each other. The distance $\sfd$ is the geodesic distance obtained from the geodesic distances $\sfd_{S_i}$ on $S_i$, namely
\[
\sfd(x,y) = \begin{cases}
\sfd_{S_i}(x,y), & \text{if }x,y \in S_i\\
\sfd_{S_i}(x,x_i) + \sfd_{S_j}(y,x_j), & \text{if }x \in S_i, y \in S_j, i \ne j.
\end{cases}
\]
On each $S_i$ we define a weight
\[
w_i(x) = \begin{cases}
2^{-i}, & \text{if }\sfd(x,x_i) < 2^{-2i} \text{ or }
\sfd(x,x_i) > 2^{-i}\pi - 2^{-2i}\\
i, & \text{otherwise.}
\end{cases}
\]
The reference measure of our space is then given as 
\[
\mm = \sum_{i=1}^\infty w_i\mathcal H^1\restr{S_i}.
\]

Next we define the compact $\ca{BV}$-extension set $E \subset \X$ as
\[
E = \bigcup_{i=1}^\infty\{x\in S_i \,:\, \sfd(x,x_i) \le 2^{-2i+1}\}.
\]
The bounded extension operator $T \colon \ca{BV}(E) \to \ca{BV}(\X)$ is given in each $S_i$ by a reflection of the function to a neighbourhood of the antipodal point and by extending the function as a constant on the remaining parts. Let us be more precise. Since $u\in \ca{BV}(E)$, then $u\restr{S_i}\in \ca{BV}(E \cap S_i,\sfd_{S_i},\mm\restr{E \cap S_i})$ for every $i\in \mathbb{N}$. 
Denote $u \restr{S_i}\colon E \cap S_i \to \R$ as $u_i$. 
Since $u_i$ has a continuous representative in $E \cap S_i$, we can define 
\[
u_i^+ = \lim_{x \nearrow 2^{-2i+1}}u_i([x]_i) \qquad \textrm{and} \qquad u_i^- = \lim_{x \searrow -2^{-2i+1}}u_i([x]_i).
\]
We define the extension on $S_i$ as
\begin{equation*}
    T u([x]_i):=\begin{cases}
        u_i([x]_i) & \text{if }x \in (-2^{-2i+1},2^{-2i+1}),\\
        u_i^+ & \text{if }x \in [2^{-2i+1},2^{-i}\pi -2^{-2i+1}],\\
        u_i([2^{-i}\pi -x]_i)& \text{if }x \in (2^{-i}\pi -2^{-2i+1},2^{-i}\pi +2^{-2i+1}),\\
        u_i^-& \text{if }x \in [2^{-i}\pi +2^{-2i+1}, -2^{-2i+1}].
    \end{cases}
\end{equation*}

In order to see that $E$ is not a $BV$-extension set, take $i \in \mathbb N$ and consider the function $u_i \in BV(E)$ that is $0$ everywhere else except for $E \cap S_i$ where we define it to be
\[
u_i(x) = \begin{cases}
 2^{2i}\sfd(x,x_i), & \text{if }\sfd(x,x_i) < 2^{2i},\\
 1, & \text{otherwise.}
\end{cases}
\]
Then 
\[
\|u_i\|_{BV(E)} \le 2^{-i+1} + i2^{-2i+2} \le 2^{-i+2},
\]
while for any extension $\tilde u_i$ of $u_i$ to $BV(\X)$ we can estimate as follow. We denote
\[
t = \textrm{ess\,inf}\left\{\tilde u_i(x) \,:\, x \in S_i, 2^{-2i} \le \sfd(x,x_i) \le 2^{-i}\pi - 2^{-2i}\right\}.
\]
Then
\[
\|\tilde u_i\|_{L^1(\mm)} \ge |t| i (2^{-i+1}\pi -2^{-2i+2}) \ge |t|i2^{-i}
\]
and
\[
|D\tilde u_i|(S_i) \ge i|1-t| \ge |1-t|i2^{-i}.
\]
Together these imply 
\[
\|\tilde u_i\|_{BV(\X)} \ge i 2^{-i} \ge \frac{i}{4}\|u_i\|_{BV(E)}.
\]
Thus, any extension operator from $BV(E)$ to $BV(\X)$ has norm at least $i/4$. Since this holds for every $i \in \mathbb N$, the set $E$ is not a $BV$-extension set.
\end{example}
 
\subsection{Connectivity and decomposition properties of $BV$- and $\ca{BV}$-extension sets}\label{sec:connectivity}

Proposition \ref{prop_closedrepr} allows us to start from a closed $BV$-extension set $E$ and move to a new closed $BV$-extension set $\overline{E^1}$, that is the closure of the points of density $1$. The original set $E$ might not have nice metric/topological properties, but the set $\overline{E^1}$ does.

Since in this section we assume the metric measure space $(\X,\sfd,\mm)$ to be a PI space, the measure $\mm$ is doubling. We prove the following simple lemma.

\begin{lemma}
Let $(\X,\sfd,\mm)$ be a doubling metric measure space. Let $E \subset \X$ be a closed set. Then any of the properties 
($\ca{BV}$), ($BV$), ($L^{1,1}$), ($W^{1,1}$), ($L_w^{1,1}$), ($W_w^{1,1}$), (s-$\ca{BV}$), (s-$BV$)  
for $E$ implies the same property for $\overline{E^1}$.
\end{lemma}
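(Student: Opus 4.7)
The plan is to deduce this lemma as a direct corollary of Proposition \ref{prop_closedrepr}, applied with $A = E$ and $B = \overline{E^1}$. Since both sets are closed, the only thing to verify is that $\mm(E \triangle \overline{E^1}) = 0$.

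First, I would recall that the doubling assumption on $\mm$ implies the validity of the Lebesgue differentiation theorem in $(\X,\sfd,\mm)$. Applying it to the characteristic function $\chi_E$, for $\mm$-a.e.\ $x \in E$ one has $x \in E^1$, and for $\mm$-a.e.\ $x \in \X \setminus E$ one has $x \in E^0$. Consequently $\mm(E \triangle E^1) = 0$.

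Next I would use the fact that $E$ is closed to show the inclusion $E^1 \subset E$: indeed, if $x \notin E$, then there exists $r>0$ such that $B(x,r) \cap E = \emptyset$, so the upper density of $E$ at $x$ is zero and in particular $x \notin E^1$. Taking closures gives $\overline{E^1} \subset \overline{E} = E$. Combining this with the previous paragraph,
\[
\mm(E \triangle \overline{E^1}) = \mm(E \setminus \overline{E^1}) \le \mm(E \setminus E^1) = 0.
\]

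With $\mm(E \triangle \overline{E^1}) = 0$ established, Proposition \ref{prop_closedrepr} applied to the pair $(A,B) = (E, \overline{E^1})$ yields that any of the listed extension properties for $E$ transfers to $\overline{E^1}$, which is the claim. No step is really delicate here; the only subtlety is remembering that closedness of $E$ is what forces $E^1 \subset E$ (and hence also $\overline{E^1}\subset E$), so that the symmetric difference has $\mm$-measure zero.
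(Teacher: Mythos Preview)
Your proposal is correct and follows essentially the same approach as the paper: both reduce the lemma to Proposition \ref{prop_closedrepr} by showing $\mm(E\triangle\overline{E^1})=0$ via Lebesgue differentiation and the closedness of $E$. Your argument is in fact slightly more explicit than the paper's, since you spell out why $E^1\subset E$ (and hence $\overline{E^1}\subset E$), whereas the paper only records $\mm(E\setminus\overline{E^1})=0$ and leaves the inclusion $\overline{E^1}\subset E$ implicit.
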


\begin{proof}
    Since the space is doubling, by definition of $E^1$ and the application of Lebesgue differentiation theorem, we have that $\mm(E\setminus E^1)=0$. 
    Moreover, since $E$ is closed we have that $E \setminus \overline{E^1}\subset E \setminus E^1$, thus $\mm(E \setminus \overline{E^1})=0$. By Proposition \ref{prop_closedrepr}, we have that $\overline{E^1}$ that satisfies the conclusion of lemma.   
\end{proof}

Let us recall the notion of indecomposable sets for finite perimeter.

\begin{definition}
\label{def:decomposability}
Let $(\X,\sfd,\mm)$ be a metric measure
space and $E \subset \X$ a set of finite perimeter. Given any Borel set $B \subset \X$, we say that $E$ is
decomposable with respect to $B$ provided there exists a partition of $E \cap B$ into sets 
$F,G \subset E$
such that $\mm(F), \mm(G) > 0$ and $\Pe_B(E, B) = \Pe_B(F, B) + \Pe_B(G, B)$. We say that $E$ is
indecomposable with respect to $B$ if it is not decomposable with respect to $B$.
We simply say that $E$ is (in)decomposable if $E$ is (in)decomposable with respect to $X$.
\end{definition}

It is known \cite{Lahti_decomp,BPR2020} that on PI spaces there exists a unique (up to measure zero sets) decomposition of sets of finite perimeter into indecomposable sets.

\begin{theorem}
Let $(\X,\sfd,\mm)$  be a PI space. Let $E \subset \X$ be a set of finite perimeter. Then there exists a unique (finite or countable) partition $\{E_i\}_{i\in I}$ of $E$ into
indecomposable subsets of
$\X$ such that $\mm(E_i) > 0$ for every $i \in I$ and $\Pe(E, \X) =
\sum_{i \in I}\Pe(E_i,\X)$, where uniqueness is in the $\mm$-a.e. sense. Moreover, the sets $\{E_i\}_{i\in I}$
are maximal indecomposable sets, meaning that for any Borel set $F \subset E$ with
$P(F, \X) < \infty$ that is indecomposable there is a (unique) $i \in I$ such that $\mm(F \setminus E_i) =
0$.
\end{theorem}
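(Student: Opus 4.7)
The result is the standard decomposition theorem for sets of finite perimeter on PI spaces, already available in \cite{Lahti_decomp, BPR2020}, and my proof proposal would mirror the strategy used there. The plan is to organize the argument around three steps: construction of maximal indecomposable components, countability together with the additivity of perimeter, and uniqueness with the maximality property.

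For existence I would introduce an equivalence relation on points of $E$ (in an essential, measure-theoretic sense): declare $x \sim y$ if there exists an indecomposable Borel set $F \subseteq E$ of finite perimeter containing both. The crucial algebraic fact, obtained via the $BV$ coarea formula, is that if $F_1,F_2 \subseteq E$ are indecomposable of finite perimeter and $\mm(F_1 \cap F_2) > 0$, then $F_1 \cup F_2$ is again indecomposable of finite perimeter, with $\Pe(F_1 \cup F_2, \X) \leq \Pe(F_1,\X) + \Pe(F_2,\X)$. This allows one to saturate equivalence classes and to define the $E_i$ as essential representatives of the maximal classes, which are then indecomposable by construction. Countability follows from the relative isoperimetric inequality in PI spaces together with the $\sigma$-finiteness of $\mm$: on any fixed ball of positive measure, each $E_i$ either has negligible intersection with the ball or contributes at least a definite amount of perimeter, while the total perimeter $\Pe(E,\X)$ is finite. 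The identity $\Pe(E,\X) = \sum_i \Pe(E_i,\X)$ is then obtained as a limit of finite partial decompositions, each of which realizes the full perimeter and for which additivity is immediate.

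For uniqueness, suppose $\{E_i\}_{i\in I}$ and $\{\tilde E_j\}_{j\in J}$ are two such decompositions. Fix $i$ and consider the partition of $E_i$ induced by $\{\tilde E_j\}$. Since $E_i$ is indecomposable and the additivity $\Pe(E_i,\X) = \sum_j \Pe(E_i \cap \tilde E_j,\X)$ is inherited from the two decompositions of $E$, all but one summand must correspond to a null set, yielding a unique $j(i)$ with $\mm(E_i \triangle \tilde E_{j(i)}) = 0$. The very same argument applied to the two-element partition $\{F, E \setminus F\}$, for any indecomposable $F \subseteq E$ with $\Pe(F,\X) < \infty$, delivers the maximality statement $\mm(F \setminus E_i) = 0$ for a unique $i \in I$.

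The main obstacle is the stability of indecomposability under essential unions of overlapping indecomposable sets used in the first step. In the Euclidean setting this is handled through the structure theorem of Ambrosio--Caselles--Masnou--Morel \cite{ALC01}; in the metric PI setting one must instead combine the $BV$ coarea formula with the $(1,1)$-Poincar\'e inequality to control how the measure-theoretic boundaries of $F_1$ and $F_2$ interact on the overlap and to rule out the creation of new boundary in the union. This is precisely the delicate point addressed in \cite{Lahti_decomp, BPR2020}, whose arguments I would import verbatim.
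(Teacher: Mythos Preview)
The paper does not prove this theorem; it is stated without proof and attributed to \cite{Lahti_decomp, BPR2020} in the sentence immediately preceding the statement. Your proposal correctly identifies this and offers a sketch of the argument from those references, which is exactly in line with how the paper treats the result.
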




\begin{remark}
    It follows by the definition of indecomposable sets that given a metric measure space $(\X,\sfd,\mm)$ such that ${\rm supp}\,\mm=\X$, if $\X$ is indecomposable, then it is connected.
    If ${\rm supp}\,\mm$ is not the full space, then indecomposability of $\X$ does not imply connectedness. Indeed, consider in $\mathbb{R}^2$ for the set $\X = B((-2,0),1) \cup B((2,0),1)$ the metric measure space $(\X,\sfd_e,\mathcal{L}^2\restr{B((-2,0),1)})$. It is straightforward to check that $X$ is indecomposable, but not connected.
\end{remark}


\begin{remark}
    If $(\X,\sfd,\mm)$ is a PI space, then $\X$ is indecomposable. Indeed, if not, there exists a Borel set $A \subset \X$ with $\mm(A)>0$, $\mm(\X\setminus A)>0$ such that $ {\rm P}(A,\X)=0$. Since $(\X,\sfd,\mm)$ satisfies a $1$-Poincar\'{e} inequality, we have that $\chi_A$ is constant, thus $A=\emptyset$ or $A=\X$, up to a negligible set. This contradicts respectively $\mm(A)>0$ or $\mm(\X\setminus A)>0$.
\end{remark}

\begin{lemma}\label{lma:connected_hom}
 Suppose that $(\X,\sfd,\mm)$ is a metric measure space so that $\X$ is indecomposable. Let $E \subset \X$ be a closed $\ca{BV}$-extension set. Then $E$ is indecomposable with respect to $E$.
\end{lemma}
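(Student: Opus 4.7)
The plan is to argue by contradiction. Suppose $E$ is decomposable with respect to $E$; then there exists a Borel partition $E=F\sqcup G$ with $\mm(F),\mm(G)>0$ and
\[
\Pe_E(F,E)+\Pe_E(G,E)=\Pe_E(E,E)=0,
\]
where the last equality holds because $\chi_E$ is constantly equal to $1$ as an element of $\ca{BV}(E)$. In particular $|D\chi_F|_E(E)=0$, so $\chi_F\in\ca{BV}(E)$ with $\|\chi_F\|_{\ca{BV}(E)}=0$.

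Next I would apply the $\ca{BV}$-extension operator to $\chi_F$: there exists $u\coloneqq T\chi_F\in\ca{BV}(\X)$ with $u=\chi_F$ holding $\mm$-a.e.\ on $E$ and with
\[
|Du|(\X)=\|u\|_{\ca{BV}(\X)}\le C\,\|\chi_F\|_{\ca{BV}(E)}=0.
\]
Thus $u$ is a $\ca{BV}$ function on $\X$ of zero total variation. By the coarea formula for $BV$ functions in metric measure spaces (see \cite{Mir03}),
\[
0=|Du|(\X)=\int_{\R}\Pe(\{u>t\},\X)\,\d t,
\]
so $\Pe(\{u>t\},\X)=0$ for $\mathcal L^1$-a.e.\ $t\in\R$.

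Finally, since $u=\chi_F$ $\mm$-a.e.\ on $E$, for every $t\in(0,1)$ the superlevel set $A_t\coloneqq\{u>t\}$ satisfies $A_t\cap E=F$ up to an $\mm$-null set. Picking any such $t\in(0,1)$ for which additionally $\Pe(A_t,\X)=0$, we obtain $\mm(A_t)\ge\mm(F)>0$ and $\mm(\X\setminus A_t)\ge\mm(G)>0$. Since $\Pe(\X,\X)=0=\Pe(A_t,\X)+\Pe(\X\setminus A_t,\X)$, this contradicts the assumed indecomposability of $\X$ and finishes the proof. The only mildly delicate point is the passage from the extended $\ca{BV}$ function $u$ (which a priori need not be a characteristic function) back to a set of finite perimeter in $\X$; the coarea formula takes care of this automatically, and the fact that $u$ equals the two-valued function $\chi_F$ on $E$ ensures that any threshold $t\in(0,1)$ separates $F$ from $G$ in measure.
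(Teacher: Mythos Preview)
Your proof is correct and follows essentially the same contradiction scheme as the paper. The only difference is cosmetic: the paper invokes \cite[Remark 3.2]{CKR23} to say that a $\ca{BV}$-extension set has the extension property for sets of finite perimeter, so the extension of $\chi_F$ can be taken directly to be a characteristic function $\chi_{\tilde A}$ with $\Pe(\tilde A,\X)\le C\,\Pe_E(F,E)=0$; you instead extend $\chi_F$ as a $\ca{BV}$ function and then recover a set via the coarea formula, which is exactly how that remark is proved.
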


\begin{proof}
 Since $E$ is closed, ${\rm P}_E(E,E)=0$. Suppose that $E$ is decomposable with respect to $E$. Then there exists a set $A \subset E$ so that 
 $\mm(A)>0$, $\mm(E \setminus A)>0$ and ${\rm P}_E(A,E) = 0$.
 Since $E$ is a $\ca{BV}$-extension set, by \cite[Remark 3.2]{CKR23} the set $E$ has the extension property for sets of finite perimeter. Let us call $\tilde{A}$ the extension of $A$ from $E$ to $\X$.
 Since $\mm(A)>0$, $\mm(E \setminus A)>0$, we have that $\mm(\tilde{A})\ge \mm(A)>0$ and $\mm(\X\setminus \tilde{A}) \ge \mm(E \setminus A)>0$, thus the assumption on indecomposability of $\X$ gives
 \[
 {\rm P}(\tilde{A},\X)>0.
 \]
 However, since $E$ has the extension property for sets of finite perimeter,
 \[{\rm P}(\tilde{A},\X)\le C {\rm P}_{E}(A,E)=0.\]
 This is a contradiction.
\end{proof}
\begin{remark}
    We point out that, as a consequence of Proposition \ref{prop_closedrepr} and Lemma \ref{lma:connected_hom} (since for a given closed set $E$ we have that $\mm(E \triangle \overline{E^1})=0$), under the assumption that $E$ is closed $\ca{BV}$-extension set, also $\overline{E^1}$ is indecomposable with respect to $\overline{E^1}$.
\end{remark}

If we consider extension sets with the full norm, they need not be connected even in the Euclidean setting. To see this, we can consider two disjoint disks in $\mathbb R^2$. However, under a PI-assumption we have that any closed bounded extension set $E$ has a finite decomposition into indecomposable sets with respect to $E$. This is an immediate consequence of the following lemma.


\begin{lemma}\label{lma:connected_full}
Let $(\X,\sfd,\mm)$ be a PI space and let
  $E$ be a closed bounded $BV$-extension set. Then, there exists $N$, depending only $E$ and the constants on the doubling property and and the Poincar\'e inequality  such that the following holds.
  For every finite Borel partition $\{E_i\}_{i=1}^m$ of $E$ satisfying
  $\mm(E_i)>0$ and $\Pe_E(E_i,E)=0$ for all $1\le i \le m$  we have $m \le N$.
\end{lemma}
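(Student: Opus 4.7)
My strategy is to lift each indicator \(\chi_{E_i}\) to \(\X\) via the \(BV\)-extension operator and then apply the weak-type Sobolev--Poincar\'e inequality from Proposition~\ref{prop:Sobo} on a single ball enclosing \(E\). Let \(T\colon BV(E)\to BV(\X)\) be a \(BV\)-extension operator of norm \(C_T\). For each \(i\in \{1,\dots,m\}\) set \(u_i\coloneqq \chi_{E_i}\); by hypothesis \(\|u_i\|_{BV(E)}=\mm(E_i)+\Pe_E(E_i,E)=\mm(E_i)\). Replacing \(Tu_i\) by its truncation to \([0,1]\)---an operation that neither increases the total variation nor changes the trace on \(E\)---I obtain \(\tilde u_i\in BV(\X)\) with \(\tilde u_i\in [0,1]\), \(\tilde u_i=\chi_{E_i}\) \(\mm\)-a.e.\ on \(E\), and \(|D\tilde u_i|(\X)\le C_T\mm(E_i)\). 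Since \(E\) is bounded I fix \(x_0\in E\) and \(R>0\) with \(E\subset B\coloneqq B(x_0,R)\), and set \(\alpha_i\coloneqq (\tilde u_i)_B\).

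Next, I apply Proposition~\ref{prop:Sobo} to \(\tilde u_i\) on \(B\) with threshold \(t=1/4\). Combined with \(|D\tilde u_i|(B(x_0,5\lambda R))\le C_T\mm(E_i)\) and the doubling property, this yields
\[
\mm\big(\{z\in B:\,|\tilde u_i(z)-\alpha_i|>1/4\}\big)\le C'\,\mm(E_i)^{s/(s-1)},
\]
where \(s\) is as in \eqref{eq:s_doubling} and \(C'\) depends only on \(R\), \(\mm(B)\), \(\mm(B(x_0,5\lambda R))\), \(C_T\) and the PI/doubling constants. Writing \(\sigma\coloneqq s/(s-1)>1\), I split based on \(\alpha_i\). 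Since \(\tilde u_i=1\) on \(E_i\), whenever \(\alpha_i<3/4\) the inclusion \(E_i\subset\{|\tilde u_i-\alpha_i|>1/4\}\) gives \(\mm(E_i)\le C'\mm(E_i)^\sigma\), which forces \(\mm(E_i)\ge c_1\coloneqq (1/C')^{s-1}\). Since \(\tilde u_i=0\) on \(E\setminus E_i\), whenever \(\alpha_i\ge 3/4\) the inclusion \(E\setminus E_i\subset\{|\tilde u_i-\alpha_i|>1/4\}\) gives \(\mm(E\setminus E_i)\le C'\mm(E_i)^\sigma\).

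Finally, set \(c\coloneqq \min\{c_1,\,(\mm(E)/(2C'))^{(s-1)/s}\}\). For any index \(i\) with \(\mm(E_i)\le \mm(E)/2\): in the first subcase I directly get \(\mm(E_i)\ge c_1\ge c\), while in the second subcase the estimate \(\mm(E)/2\le \mm(E\setminus E_i)\le C'\mm(E_i)^\sigma\) forces \(\mm(E_i)\ge (\mm(E)/(2C'))^{(s-1)/s}\ge c\). Since at most one index can satisfy \(\mm(E_i)>\mm(E)/2\), I conclude \(m\le 1+\mm(E)/c\), giving the bound with \(N\coloneqq 1+\lfloor\mm(E)/c\rfloor\) depending only on \(E\) (through \(\mm(E)\), \(\diam(E)\) and the operator norm \(C_T\)) and on the PI/doubling constants. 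The main delicate point is balancing the two cases of the dichotomy: neither one alone produces a uniform lower bound on \(\mm(E_i)\) valid for every \(i\), and the choice of \(c\) is precisely what rules out the small-measure situation \(\alpha_i\ge 3/4\) unless \(E_i\) is the unique dominant piece.
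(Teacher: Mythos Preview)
Your proof is correct and follows essentially the same approach as the paper: extend $\chi_{E_i}$ via the $BV$-extension operator, apply the weak-type Sobolev--Poincar\'e inequality (Proposition~\ref{prop:Sobo}) on a ball containing $E$, and use $|D(T\chi_{E_i})|(\X)\le C_T\mm(E_i)$ to force a uniform lower bound on $\mm(E_i)$. The only cosmetic difference is in the dichotomy: the paper controls the average $(T\chi_F)_B$ directly via $\|T\chi_F\|_{L^1(\X)}\le C_T\mm(F)$, so that whenever $\mm(F)$ is small the average is automatically below $1/2$ and $F$ lies in the exceptional set; you instead split on whether $\alpha_i$ is below or above $3/4$ and handle the high-average case via the complement $E\setminus E_i$, at the cost of treating a possible ``dominant piece'' separately. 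Both routes yield the same conclusion.
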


\begin{proof}
 Let us denote the extension operator by $T$. Let $r = \diam(E)+1$ and $C,s$ be as in Proposition \ref{prop:Sobo}. We prove that the result holds with the choice 
 \[ N:=\lfloor \mm(E)\mm(B(x,r))^{-1}\max\{ 2 \|T\|,C \|T\|^s \diam(E)^s \} \rfloor. \]

  Let $F \subset E$ be so that $\mm(F)>0$ and $|D\chi_F|_{E}(E) = {\rm P}_E(F,E) =0$.  
 Let $x \in F$. The claim will follow if we prove
\begin{equation}
\label{eq:bound_measure_connected_component}
\frac{\mm(B(x,r))}{\mm(F)} \le \max\left(2\|T\|,C\|T\|^s\diam(E)^s\right).
\end{equation}
 Notice that we have 
 \[
\|\chi_F\|_{BV(E)} = \mm(F).
\]

We suppose that $\mm(F) < \frac{\mm(B(x,r))}{2\|T\|}$. Then
\[
 \int_{B(x,r)}T\chi_F\,\d\mm \le \|T\chi_F\|_{BV(\X)} \le \|T\|\|\chi_F\|_{BV(E)} = \|T\|\mm(F) < \frac{\mm(B(x,r))}{2}.
\]
We define $u:=T\chi_F$ and the previous computation gives that,
\begin{equation*}
    |u(z)-u_{B(x,r)}|=1- u_{B(x,r)} > \frac{1}{2},\qquad \text{for every }z \in F.
\end{equation*}
Combining this observation and the second claim in Proposition \ref{prop:Sobo}, we then get
\begin{align*}
\mm(F) & \le \mm\left(\left\{z \in B(x,r)\,:\,|u(z) - u_{B(x,r)}|>\frac12\right\}\right)\\
& \le C\mm(B(x,r))^\frac{1}{1-s}\left(r|D(T\chi_F)|(\X)\right)^{\frac{s}{s-1}}\\
& \le C\mm(B(x,r))^\frac{1}{1-s}\left(r\|T\|\mm(F)\right)^{\frac{s}{s-1}}.
\end{align*}
Rearranging the terms and raising to power $s-1$ gives
\[
\mm(F) \ge \frac{\mm(B(x,r))}{C\|T\|^sr^s}.
\]
This proves \eqref{eq:bound_measure_connected_component}. 
%
\end{proof}

Let us end this subsection with an example showing that Lemma \ref{lma:connected_full} fails for general metric measure spaces.

\begin{example}
 Lemma \ref{lma:connected_full} is not true if we remove the assumption of the metric measure space being a PI space.
 Let $\X = \mathbb R$ with the Euclidean distance and $\mm=\mathcal{L}^1\restr{C}$, where $C$ is a Cantor set with $\mathcal{L}^1(C)>0$. Then $C$ is a bounded, closed $BV$-extension set that does not have a decomposition into indecomposable sets with respect to $C$.


\end{example}


\section{Compact $BV$-extension sets in the Euclidean plane}\label{sec:plane}

In this section we study the special case of closed subsets of the Euclidean plane. We show that compact $BV$-extension sets of $\mathbb R^2$ with finitely many complementary components are $W^{1,1}$-extension sets. The basic idea is to follow \cite{BR21} and modify the $BV$-extension so that it has zero variation on the boundaries of the complementary components. By the coarea formula, this is reduces to modifying sets of finite perimeter, and by the following decomposition theorem from \cite[Corollary 1]{ALC01} it is then reduced to modifying Jordan loops.

\begin{theorem}[Structure of the essential boundary of sets of finite perimeter in the plane]
\label{thm:planardecomposition}
Let $E \subset \mathbb R^2$ have finite perimeter. Then, there exists a unique decomposition of $\partial^ME$ into rectifiable Jordan curves $\{J_i^+, J_k^-\,:\,i,k \in \mathbb N\}$, up to $\mathcal H^1$-measure zero sets, such that
\begin{enumerate}
    \item Given $\text{int}(J_i^+)$, $\text{int}(J_k^+)$, $i \ne k$, they are either disjoint or one is contained
in the other; similarly, for $\text{int}(J_i^-)$, $\text{int}(J_k^-)$, $i \ne k$, they are either disjoint or one is
contained in the other. Each $\text{int}(J_i^-)$ is contained in one of the $\text{int}(J_k^+)$.
    \item $P(E,\R^2) = \sum_{i}\mathcal H^1(J_i^+) + \sum_k \mathcal H^1(J_k^-)$.
    \item If $\text{int}(J_i^+) \subset \text{int}(J_j^+)$, $i \ne j$, then there is some rectifiable Jordan curve  $J_k^-$ such that $\text{int}(J_i^+)\subset \text{int}(J_k^-) \subset \text{int}(J_j^+)$. Similarly, if $\text{int}(J_i^-) \subset \text{int}(J_j^-)$, $i \ne j$, then there is some rectifiable Jordan curve  $J_k^+$ such that $\text{int}(J_i^-)\subset \text{int}(J_k^+) \subset \text{int}(J_j^-)$.
    \item Setting $L_j =\{i \,:\, \text{int}(J_i^-)\subset \text{int}(J_j^+)\}$ the sets $Y_j = \text{int}(J_j^+) \setminus \bigcup_{i \in L_j}\text{int}(J_i^-)$ are pairwise disjoint, indecomposable and $E = \bigcup_j Y_j$.
\end{enumerate}
\end{theorem}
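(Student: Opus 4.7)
The plan is to reduce to indecomposable pieces and then treat each piece via its \emph{saturation} (the union of the piece with all its bounded complementary components), using the planar Jordan curve theorem together with the De Giorgi--Federer structure theory for finite-perimeter sets. The decomposition of $E$ into its maximal indecomposable components $\{Y_j\}_j$ is additive on perimeter, $P(E,\mathbb R^2)=\sum_j P(Y_j,\mathbb R^2)$, so it suffices to associate to each $Y_j$ an outer rectifiable Jordan loop $J_j^+$ together with an at most countable family of inner rectifiable Jordan loops $\{J_k^-\}_{k\in L_j}$ satisfying $Y_j = \text{int}(J_j^+)\setminus \bigcup_{k\in L_j}\text{int}(J_k^-)$ up to Lebesgue-null sets, whose images cover $\partial^* Y_j$ up to $\mathcal H^1$-null sets. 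Collecting these loops over $j$ and using De Giorgi's identity $|D\chi_E|=\mathcal H^1\restr{\partial^* E}$ together with Federer's theorem $\partial^*E =\partial^M E$ (mod $\mathcal H^1$-null) then yields the decomposition of $\partial^M E$ and the perimeter additivity in item (2).

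Fix then an indecomposable $Y\subset\mathbb R^2$ of finite perimeter and define $\mathrm{sat}(Y):=Y^1\cup \bigcup\{H: H\text{ a bounded component of }\mathbb R^2\setminus Y^1\}$. The key technical step is to prove that $\mathrm{sat}(Y)$ is a bounded simply connected set of finite perimeter whose reduced boundary coincides, up to $\mathcal H^1$-null sets, with the image of a single rectifiable Jordan loop (this loop will be $J_j^+$). Finiteness of the perimeter and the bound $P(\mathrm{sat}(Y),\mathbb R^2)\le P(Y,\mathbb R^2)$ are immediate from the definition (filling holes cannot increase variation); simple connectivity is built in; and the passage from ``bounded, simply connected, finite perimeter'' to ``boundary equals a single Jordan loop'' is the core planar topological step, proved by combining rectifiability of $\partial^*\mathrm{sat}(Y)$ with the oriented inner normal provided by De Giorgi and a polygonal-approximation / Hausdorff-limit argument that assembles the rectifiable boundary into a single injective Lipschitz loop.

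The inner loops $J_k^-$ are produced by applying the same construction to the bounded complementary components of $Y^1$: each such hole $H$, being a bounded planar set of finite perimeter, has a saturation whose boundary provides one Jordan loop, and countability of the family follows from $\sum_H P(H,\mathbb R^2)\leq P(Y,\mathbb R^2)<\infty$. The identity $Y=\text{int}(J_j^+)\setminus\bigcup_{k}\text{int}(J_k^-)$ then reads off directly from the definition of saturation, giving (4). Property (1) is the Jordan curve theorem applied to the resulting planar family of loops, and the alternation statement (3) records the fact that the inner normal of $\partial^* E$ flips each time one crosses such a loop, so two nested loops of the same sign must sandwich a loop of opposite sign coming either from the saturation of another $Y_{j'}$ or from one of its holes.

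The principal obstacle is the central assertion of the second paragraph --- that a bounded, simply connected planar set of finite perimeter has boundary equal to a single rectifiable Jordan loop up to $\mathcal H^1$-null sets. This is where the argument genuinely uses the two-dimensional ambient together with the indecomposability of $Y$ (to prevent pathological branching of $\partial^* Y$), and where techniques such as Carath\'eodory prime ends, or direct polygonal approximation of level curves of smoothings of $\chi_{\mathrm{sat}(Y)}$, come into play. Once that single-Jordan-loop statement is in hand, all the remaining assertions of the theorem are formal consequences of planar Jordan-curve combinatorics and the De Giorgi--Federer identification of the reduced and measure-theoretic boundaries.
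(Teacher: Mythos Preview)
The paper does not prove this theorem at all: it is quoted verbatim as \cite[Corollary 1]{ALC01} and used as a black box in the modification arguments of Section \ref{sec:plane}. So there is no ``paper's own proof'' to compare your proposal against.

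That said, your sketch is essentially the strategy of the original Ambrosio--Caselles--Masnou--Morel proof: decompose $E$ into its indecomposable components, pass to the saturation of each component to extract the outer Jordan loop, then iterate on the holes to produce the inner loops, and read off the nesting combinatorics from the Jordan curve theorem. You have correctly identified the one genuinely hard step --- that a bounded, indecomposable, simply connected planar set of finite perimeter has reduced boundary equal (mod $\mathcal H^1$-null) to a single rectifiable Jordan curve --- and your outline of how to attack it (smoothings, polygonal level-curve approximation, assembling into a single injective loop) is on the right track, though this step is considerably more delicate than your paragraph suggests and in \cite{ALC01} occupies the bulk of the work. If you intend to actually write out a self-contained proof rather than cite the result, that is where essentially all of your effort will go; the rest, as you say, is formal.
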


In order to modify the Jordan loops, we will use the following lemma from \cite{BR21}.

\begin{lemma}[{\cite[Lemma 5.3]{BR21}}]\label{lma:quasi_int}
 Let $\Omega$ be a Jordan domain. For every $x,y \in \overline{\Omega}$, every $\varepsilon > 0$ and any rectifiable curve $\gamma \subset \overline{\Omega}$ joining $x$ to $y$ there exists a curve $\sigma \subset \Omega \cup \{x,y\}$ joining $x$ to $y$ so that
 \[
   \ell(\sigma) \le \ell(\gamma) + \varepsilon.
 \]
\end{lemma}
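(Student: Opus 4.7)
The plan is to modify $\gamma$ only in small neighborhoods of its intersections with $\partial\Omega$, replacing each such passage by an interior detour, and to control the total length increment by a finite covering argument. The essential tool is the local topology of a Jordan domain: by Carath\'eodory's theorem, the Riemann map extends to a homeomorphism $\phi\colon \overline{\mathbb D}\to\overline\Omega$, so $\overline\Omega$ is locally path-connected at every boundary point. Concretely, for every $p\in\partial\Omega$ and every $\eta>0$ there is $\rho(p,\eta)\in(0,\eta)$ such that any two points $a,b\in B(p,\rho(p,\eta))\cap\overline\Omega$ are joined by a rectifiable curve in $\big(\Omega\cup\{a,b\}\big)\cap B(p,\eta)$; this comes from pushing $\phi^{-1}(a),\phi^{-1}(b)$ slightly radially inward in $\mathbb D$, connecting by a short interior segment, and applying $\phi$.

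I first parameterize $\gamma\colon[0,L]\to\overline\Omega$ by arclength, $L=\ell(\gamma)$, set $A=\gamma^{-1}(\partial\Omega)\setminus\{0,L\}$, and assume $A\ne\emptyset$ (otherwise $\sigma=\gamma$ suffices). The compact set $K=\gamma(\bar A)\subset\partial\Omega\setminus\{x,y\}$ is the obstruction. I cover $K$ by finitely many balls $B(p_i,\rho(p_i,\eta_i))$, $i=1,\dots,N$, with $\eta_i$ chosen small enough, and decompose the open preimage $\gamma^{-1}(U)\subset[0,L]$, with $U=\bigcup_i B(p_i,\rho(p_i,\eta_i))$, into its connected components $(\alpha_k,\beta_k)$. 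On each component I replace $\gamma|_{[\alpha_k,\beta_k]}$ by an interior detour provided by the local detour property in the corresponding ball $B(p_{i(k)},\eta_{i(k)})$. The concatenation $\sigma$ lies in $\Omega\cup\{x,y\}$ and joins $x$ to $y$; the endpoints $x,y$, if on $\partial\Omega$, are handled by a direct short connection to an interior point using local path-connectedness at $x$ and $y$.

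The main obstacle is the length estimate. Since $\phi$ is only a homeomorphism and not Lipschitz, the local detour length is controlled only pointwise in terms of the modulus of $\phi$ at $p_i$, not uniformly. Moreover, a single ball may contain several excursions of $\gamma$, so the number of detours can exceed $N$. The plan to overcome this is to adapt the radii $\eta_i$ so that in each ball the detour length is bounded by the original arclength $\beta_k-\alpha_k$ of the replaced portion plus a small additional term whose sum is controlled by $\varepsilon$: short excursions (with $\beta_k-\alpha_k$ small) yield a net length \emph{decrease}, while substantial excursions are finite in number by the rectifiability of $\gamma$. A Vitali-type refinement of the cover together with the estimate
\[\ell(\sigma)-\ell(\gamma)\le\sum_k \big(\ell(\mathrm{detour}_k)-(\beta_k-\alpha_k)\big)\]
then closes the argument, with a careful choice of $\eta_i$ depending on the compact set $K$ and the a priori length $\ell(\gamma)$.
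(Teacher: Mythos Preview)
The paper does not prove this lemma; it merely quotes it from \cite[Lemma 5.3]{BR21}. So there is no ``paper's own proof'' to compare against, and your attempt must stand on its own.

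Your outline has a genuine gap exactly where you flag it: the length control for the detours. The Carath\'eodory extension gives you a homeomorphism $\phi\colon\overline{\mathbb D}\to\overline\Omega$, and your local detour (push $\phi^{-1}(a),\phi^{-1}(b)$ radially inward and join by a chord) indeed produces a curve in $\Omega\cap B(p,\eta)$ joining $a$ and $b$. But a curve contained in $B(p,\eta)$ can have arbitrarily large length; the image under $\phi$ of a short chord in $\mathbb D$ has length $\int|\phi'|$ along that chord, and since $\phi'$ need not be bounded near $\partial\mathbb D$, you get no bound in terms of $\eta$ or of $|a-b|$. Your sentence ``short excursions (with $\beta_k-\alpha_k$ small) yield a net length \emph{decrease}'' is therefore unjustified: if $\gamma$ grazes $\partial\Omega$ on a subarc of tiny length $\beta_k-\alpha_k$, your replacement curve is still only known to lie in $B(p_{i(k)},\eta_{i(k)})$, and its length can dominate $\beta_k-\alpha_k$ by an uncontrolled factor. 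The appeal to a ``Vitali-type refinement'' does not address this, because the problem is not the combinatorics of the cover but the absence of any quantitative length bound for a single detour.

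There is a second, smaller issue: a connected component $(\alpha_k,\beta_k)$ of $\gamma^{-1}(U)$ need not be contained in the preimage of a single ball $B(p_{i},\rho_i)$, so speaking of ``the corresponding ball $B(p_{i(k)},\eta_{i(k)})$'' is not well-defined. You would need to subdivide further, which again multiplies the number of detours without improving the length estimate.

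To make an argument along your lines work, you would need a \emph{length}-controlled local detour lemma at boundary points of a Jordan domain (for instance: for every $p\in\partial\Omega$ and $\varepsilon>0$ there is $\rho>0$ such that any $a,b\in\overline\Omega\cap B(p,\rho)$ are joined in $\Omega\cup\{a,b\}$ by a curve of length at most $|a-b|+\varepsilon$). That is essentially a local version of the lemma itself and does not follow from Carath\'eodory alone. The proof in \cite{BR21} proceeds differently; if you want an independent route, one option is to first reduce to injective $\gamma$, then use planar separation: the arc $\gamma$ together with a subarc of $\partial\Omega$ bounds a Jordan subdomain of $\Omega$, inside which one can run a length/area argument (e.g.\ via level sets of the distance to $\gamma$) to produce interior curves whose lengths converge to $\ell(\gamma)$.
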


\subsection{Quasiconvexity of the complement of extension sets}

A main ingredient in modifying the Jordan loops is the quasiconvexity of the connected components of the complement of an extension set. In the case of the homogeneous norm, we have the following result. However, since we are mostly interested in $BV$-extensions with the full norm, we will skip the proof of the lemma and concentrate on the more complicated case presented in Lemma \ref{lma:quasiconvexity_full}.

\begin{lemma}\label{lma:quasiconvexity_hom}
 If $E \subset \mathbb R^2$ is a $\ca{BV}$-extension set, then each connected component of $\mathbb R^2 \setminus \overline{E^1}$ is quasiconvex.
\end{lemma}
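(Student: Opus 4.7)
The plan is to argue by contradiction, exploiting the following consequence of the $\ca{BV}$-extension property (see \cite[Remark 3.2]{CKR23}): there is a constant $C>0$ such that every Borel $A\subset E$ with $\mathrm{P}_E(A,E)<\infty$ admits an ``extension'' $\tilde A\subset\mathbb R^2$ with $\tilde A\cap E=A$ (mod $\mathcal L^2$-null sets) and $\mathrm{P}(\tilde A,\mathbb R^2)\le C\,\mathrm{P}_E(A,E)$.

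Suppose some connected component $\Omega$ of $\mathbb R^2\setminus\overline{E^1}$ is not quasiconvex. Pick $x_n,y_n\in\Omega$ with $d_n:=\|x_n-y_n\|$ and $\sfd_\Omega(x_n,y_n)\ge n\,d_n$; after translation and passing to a subsequence, the failure localizes near a single accumulation point. Since $\sfd_\Omega(x_n,y_n)\gg d_n$, the chord $[x_n,y_n]$ must meet $\overline{E^1}$, so there is a barrier point $z_n\in[x_n,y_n]\cap E$. I would then cut $E$ near $z_n$ by a short transversal: a Fubini argument over a pencil of chords of length $\Theta(d_n)$ perpendicular to $[x_n,y_n]$ yields a segment $L_n$ through (a small perturbation of) $z_n$ with $\mathcal H^1(L_n\cap E)\lesssim d_n$, where the upper bound also uses the measure density from Proposition \ref{prop:measdens}. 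Let $H_n^\pm$ be the two open half-planes bounded by the line through $L_n$, chosen so that $x_n\in H_n^-$ and $y_n\in H_n^+$, and set $A_n:=E\cap B(z_n,R\,d_n)\cap H_n^-$ for a fixed large $R$. A further radial slicing gives $\mathrm{P}_E(A_n,E)\lesssim d_n$, and the extension property provides $\tilde A_n\subset\mathbb R^2$ with $\mathrm{P}(\tilde A_n,\mathbb R^2)\lesssim d_n$.

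The heart of the argument is the reverse inequality $\mathrm{P}(\tilde A_n,\mathbb R^2)\gtrsim \sfd_\Omega(x_n,y_n)$. Since $\tilde A_n\cap E$ agrees essentially with $H_n^-\cap E$ inside $B(z_n,R\,d_n)$, the essential boundary $\partial^M\tilde A_n$ must pass close to both endpoints of $L_n$. Applying Theorem \ref{thm:planardecomposition} to $\tilde A_n$ decomposes $\partial^M\tilde A_n$ into rectifiable Jordan curves, and one of these loops must contain an arc joining neighborhoods of the two endpoints of $L_n$ through $\overline\Omega$ rather than through $E$; this arc can then be pushed into $\Omega\cup\{\text{endpoints}\}$ via Lemma \ref{lma:quasi_int}, so its length is at least $\sfd_\Omega(x_n,y_n)-O(d_n)\ge (n-O(1))\,d_n$. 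Combined with the upper bound this contradicts the extension inequality for large $n$.

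The main obstacle will be justifying the ``one of these loops traverses $\overline\Omega$'' step: a priori $\tilde A_n$ could try to close its boundary entirely inside $E$, sidestepping the long detour. Measure density (Proposition \ref{prop:measdens}) should be used to force $\partial^M\tilde A_n$ to follow the sides of $L_n$ along $E$, while indecomposability of $E$ (Lemma \ref{lma:connected_hom}) restricts how $\partial^M\tilde A_n$ can close up inside $E$. The combination of these two structural properties with the Jordan-loop decomposition and the interior-curve lemma is what makes the planar argument go through; one should not expect a corresponding statement to be easy in higher dimensions.
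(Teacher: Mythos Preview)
The paper does not prove this lemma; it refers to it as the simpler variant of the proof of Lemma~\ref{lma:quasiconvexity_full} (Step~1 there). That argument is \emph{constructive}: given $x,y\in\Omega$, one takes an almost-shortest curve $\gamma\subset\Omega$ from $x$ to $y$, and for each maximal subarc $\gamma_i$ whose endpoints lie on $[x,y]$ one sets $F_i:=A_i\cap E$, where $A_i$ is the bounded region enclosed by $\gamma_i\cup[\gamma_i(0),\gamma_i(1)]$. The relative perimeter of $F_i$ in $E$ is at most $\|\gamma_i(0)-\gamma_i(1)\|$ since $\gamma_i\subset\Omega$; the homogeneous perimeter-extension property then gives $\tilde F_i$ with $\Pe(\tilde F_i)\le C\,\|\gamma_i(0)-\gamma_i(1)\|$ (no measure term, unlike the full-norm case), and the boundary of a minimal such $\tilde F_i$ is used, exactly as in Lemma~\ref{lma:quasiconvexity_full}, to build a replacement arc $\beta_i\subset\Omega$ of length at most $\|\gamma_i(0)-\gamma_i(1)\|+\Pe(\tilde F_i)+\varepsilon_i$. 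Summing over $i$ produces a curve in $\Omega$ of length $\le(1+C)\|x-y\|+\varepsilon$, which is quasiconvexity.

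Your contradiction scheme has a fatal gap: the reverse inequality $\Pe(\tilde A_n,\R^2)\gtrsim\sfd_\Omega(x_n,y_n)$ is false for your choice of $A_n$. The half-ball $B(z_n,Rd_n)\cap H_n^-$ agrees with $A_n$ on $E$ by the very definition of $A_n$ and has perimeter $(\pi+2)Rd_n=O(d_n)$, so it is itself an admissible extension; since the extension property only asserts the existence of \emph{some} extension with controlled perimeter, you would need the lower bound to hold for \emph{every} extension, and the half-ball shows it does not. Its boundary does pass through the endpoints of $L_n$ but closes up along the diameter and the half-circle, never making the long excursion into $\overline\Omega$ that you require. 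The obstruction you flag is therefore not a technicality to be patched with measure density or indecomposability: the set $A_n$ is defined purely from $z_n$ and the scale $d_n$ and carries no information about the curve geometry of $\Omega$, so nothing forces any extension of it to have large perimeter. The paper's device is precisely to encode the near-geodesic $\gamma$ into the test sets $F_i$, and then to use the extensions to \emph{shorten} $\gamma$ rather than to manufacture a contradiction.
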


%


Unlike for a closed $\ca{BV}$-extension set, the complementary domains of a closed $BV$-extension set need not be quasiconvex. However, in the next lemma we show that they are locally quasiconvex, when the locality is seen with respect to the internal distance. It is interesting to notice that we prove Lemma \ref{lma:quasiconvexity_full}
first for points in $\Omega$ and then the properties (1) and (2) pass (via (3)) to the points on $\overline{\Omega}$. In general, the property (1) for points in $\Omega$ does not imply it for $\overline{\Omega}$. This is seen by taking $\Omega$ to be an infinitely long spiral. Similarly, the property (2) for points in $\Omega$ does not imply it for $\overline{\Omega}$. This is seen by considering a topologist's comb.

\begin{lemma}\label{lma:quasiconvexity_full}
 If $E \subset \mathbb R^2$ is a compact $BV$-extension set, then 
 there exist constants $\delta,C>0$ so that the following four properties hold for all connected components $\Omega$ of $\mathbb{R}^2 \setminus E$ and all $x,y \in \overline{\Omega}$.
 \begin{enumerate}
     \item $\sfd_{\Omega}(x,y) \le C + \|x-y\|$.
     \item If $\sfd_{\Omega}(x,y)< \delta$, then $\sfd_{\Omega}(x,y) \le C\|x-y\|$.
    \item $(\partial \Omega,\sfd_\Omega)$ is totally bounded. 
    \item For $x \in \Omega$ and $0 < r < \delta/2$ the closed ball $\overline{B}_{\sfd_\Omega}(x,r)$ is a closed subset of $\mathbb R^2$.
 \end{enumerate}
\end{lemma}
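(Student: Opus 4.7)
The crucial input is the measure density of Proposition \ref{prop:measdens}, which in $\R^2$ gives $\mathcal L^2(B(z,r) \cap E) \gtrsim r^2$ for every $z \in \partial \Omega \subset \overline{\R^2 \setminus E^0}$ and every sufficiently small $r>0$. I plan to prove the local quasiconvexity (2) first for points $x,y \in \Omega$; then bootstrap (1) on $\Omega$ and the total boundedness (3) by covering arguments based on the compactness of $E$; upgrade (1) and (2) from $\Omega$ to $\overline{\Omega}$ using (3) and Arzelà--Ascoli; and finally obtain (4) as a consequence of (2).

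The technical core is the interior case of (2). Arguing by contradiction, suppose there are components $\Omega_n$ of $\R^2\setminus E$ and points $x_n, y_n \in \Omega_n$ with $d_n := \sfd_{\Omega_n}(x_n, y_n) \to 0$ and $d_n/\|x_n - y_n\| \to \infty$. The internal ball $U_n := \{ z \in \Omega_n : \sfd_{\Omega_n}(x_n, z) < d_n/2 \}$ contains $x_n$ but is separated from $y_n$ in a small Euclidean disc around $x_n$ by a piece of $E$ whose ``detour length'' is $\gtrsim d_n$, by planar topology. Applying the coarea formula to the $1$-Lipschitz function $z \mapsto \sfd_{\Omega_n}(x_n, z)$ on $\Omega_n$, one finds a level $t_n \in (\|x_n - y_n\|, d_n/2)$ at which the slice in $\Omega_n$ has short $\mathcal H^1$-length. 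I would then test the $BV$-extension operator $T$ on the characteristic function $\chi_{A_n}$, where $A_n \subset E$ is the portion of $E$ on one side of the separator corresponding to this slice; its $BV$-norm on $E$ is controlled by the short slice length, while the perimeter of $T\chi_{A_n}$ in $\R^2$ must pick up the long separating piece of $E$, which is converted into a genuine perimeter lower bound via the measure density of Proposition \ref{prop:measdens}. Taking $d_n/\|x_n - y_n\|$ large enough contradicts $\|T\| < \infty$.

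The remaining steps are comparatively soft. Since $E$ is compact, $E \subset B(0,R)$ for some $R$, so the unbounded complementary component agrees with Euclidean distance outside $B(0, 2R)$, reducing (1) to a compact portion of $\bar\Omega$ which a finite cover by $\sfd_\Omega$-balls of radius $\delta$ handles uniformly; bounded components are even easier. Property (3) follows because $\partial\Omega \subset E$ is compact in $\R^2$ and, by (2), every sufficiently small Euclidean ball around a boundary point has bounded $\sfd_\Omega$-diameter when intersected with $\Omega$, so finitely many such balls cover $\partial\Omega$. To push (1) and (2) to $\overline{\Omega}$, approximate $x, y \in \overline{\Omega}$ by sequences in $\Omega$ and extract a uniformly convergent subsequence of near-minimizing admissible curves via Arzelà--Ascoli, using length equicontinuity; it is precisely (3) that rules out the pathologies (spiral, topologist's comb) alluded to in the statement. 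Finally, for (4), given $z_n \in \overline{B}_{\sfd_\Omega}(x, r)$ with $z_n \to z$ in $\R^2$, extract a limiting admissible curve in the same way and use (2) on short subarcs to detour around interior touches of $\partial\Omega$. The main obstacle throughout is Step 1: the construction of a test set $A_n \subset E$ whose $BV$-norm is small while its $BV$-extension has perimeter $\gtrsim d_n$, and simultaneously keeping the constants $\delta, C$ uniform across all (possibly countably many) components of $\R^2 \setminus E$, which is where compactness of $E$ and the universality of $\|T\|$ play their decisive role.
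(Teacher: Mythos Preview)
Your overall architecture (prove (1)–(2) first on $\Omega$, then bootstrap to $\overline\Omega$) matches the paper's, but there are two genuine gaps.

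\textbf{The set $A_n$ in Step~1 is not well defined, and the claimed norm comparison does not follow.} The level set $\{\sfd_{\Omega_n}(x_n,\cdot)=t_n\}$ you produce via coarea lives entirely in $\Omega_n$ and does not touch $E$; so ``the portion of $E$ on one side of the separator'' has no clear meaning, and there is no reason why $P_E(A_n,E)$ should be controlled by $\mathcal H^1$ of that slice. Even granting some choice of $A_n\subset E$ with $\|\chi_{A_n}\|_{BV(E)}\lesssim d_n$, the lower bound $P(T\chi_{A_n},\R^2)\gtrsim d_n$ does not follow from measure density: the extension $T\chi_{A_n}$ is only constrained on $E$, and its essential boundary in $\R^2$ need not trace the ``long separating piece'' of $E$ at all. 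The paper avoids this by a \emph{constructive} argument rather than a contradiction: given a near-minimizer $\gamma\subset\Omega$ from $x$ to $y$, each excursion $\gamma_i$ off the segment $[x,y]$ bounds, together with $[\gamma_i(0),\gamma_i(1)]$, a Jordan domain $A_i$; one sets $F_i=A_i\cap E$, applies the perimeter-extension property to $F_i$ to get $\tilde F_i$ with $\mathcal L^2(\tilde F_i)+P(\tilde F_i)\le C_{\rm Per}(\mathcal L^2(F_i)+P(F_i,E))$, and then uses $\partial\tilde F_i$ (plus Lemma~\ref{lma:quasi_int}) to build a competitor curve. The crucial point is that $P(F_i,E)\le\|\gamma_i(0)-\gamma_i(1)\|$ because $\partial^M F_i\cap E\subset[\gamma_i(0),\gamma_i(1)]$, which is exactly the kind of bound your coarea slice cannot provide.

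\textbf{The bootstrap $(2)\Rightarrow(1)$ is circular.} You propose to get (1) from a finite $\sfd_\Omega$-cover of the compact part of $\overline\Omega$. But obtaining such a cover is precisely total boundedness in $\sfd_\Omega$, and (2) alone does not yield it: for a spiral-shaped $\Omega$ the hypothesis of (2) is vacuous for points on different arms (their $\sfd_\Omega$-distance is $\ge\delta$), so (2) holds while $\sfd_\Omega$ is unbounded on a Euclidean-compact set. In the paper, (1) is obtained directly in Step~1 from the same constructive estimate, using $\sum_i\mathcal L^2(F_i)\le\mathcal L^2(E)$ to produce the additive constant; only \emph{then} are (1) and (2) combined (via a graph argument bounding both diameter and vertex degree) to prove total boundedness.
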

\begin{proof}
  We have that $E$ has the extension property for sets of finite perimeter with the full norm by \cite[Proposition 3.4]{CKR23} with $C_{\rm Per} \le \|T\|_{BV}$, where $T \colon BV(E) \to \mathbb{R}^2$ is the $BV$-extension operator and $C_{\rm Per}$ is the constant for the perimeter extension property of $E$ with the full norm.
  Let us define
  \[
  \delta = \frac{1}{2C_{\rm Per}\pi}.
  \]

  {\color{blue}{\sc Step 1:}}
  We first show (1) and (2) for points in $\Omega$.
  
  Thus, let $x,y \in \Omega$ be given. If $[x,y] \cap E = \emptyset$, we have $\sfd_{\Omega}(x,y) = \|x-y\|$ and thus (1) and (2) hold. Hence, in the rest of the proof of Step 1, we may assume that $[x,y] \cap E \ne  \emptyset$.
  Every connected component $\Omega$ is open, hence locally rectifiably path-connected. Therefore, since $\Omega$ is path-connected and it is locally rectifiably path-connected, it is rectifiably path-connected \cite[Lemma 2.2]{CaputoCavallucci2024II}.
  This implies that, if $x,y \in \Omega$, then $\sfd_{\Omega}(x,y)<\infty$.
  
  Let $\varepsilon > 0$.
  We denote by $\Gamma$ the collection of curves $\gamma \colon [0,1] \to \Omega$ such that $\gamma(0) = x$ and $\gamma(1) = y$.
  By taking a shorter curve $\gamma$ in $\Gamma$ if necessary, we may assume that $\gamma$ is injective and that  \begin{equation}\label{eq:almostminimalgamma}
      \ell(\gamma) \le \sfd_\Omega(x,y) + \varepsilon.
  \end{equation}   
  Let us consider separately the at most countably many subcurves $\gamma_i \colon [0,1] \to \mathbb R^2$ of $\gamma$ so that $\gamma_i\cap [x,y] = \{\gamma_i(0),\gamma_i(1)\}$. Now, $\alpha_i = \gamma_i \cup [\gamma_i(1),\gamma_i(0)]$ forms a Jordan loop. Let $A_i$ be the bounded component of $\mathbb R^2 \setminus \alpha_i$; see Figure \ref{fig:Modification_in_Omega}. 
  \begin{figure}
    \centering   \includegraphics[width=0.6\linewidth]{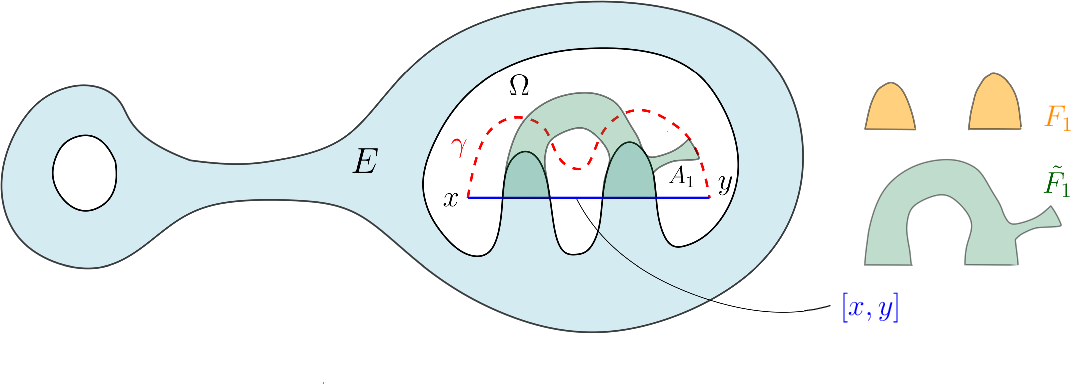}
    \caption{The picture gives a qualitative description of the construction in Step 1. The region $A_1$ is enclosed by the dashed path $\gamma$ and the  line segment $[x,y]$. The picture follows the construction in the case $A_1 \cap E \neq \emptyset$.}
    \label{fig:Modification_in_Omega}
\end{figure}

  If $A_i\cap E = \emptyset$, by Lemma \ref{lma:quasi_int} there exists a curve $\beta_i$ in $A_i\cup \{\gamma_i(0),\gamma_i(1)\}$ joining $\gamma_i(0)$ and $\gamma_i(1)$ with $\ell(\beta_i) \le \|\gamma_i(0)-\gamma_i(1)\| + 2^{-i}\varepsilon$.

  Suppose then that $F_i := A_i\cap E \ne \emptyset$.
  Since $\mathcal{H}^1(\partial A_i)=
  \ell(\alpha_i)<\infty$, we have that $A_i$ has finite perimeter, thus also $F_i$ has finite perimeter.
  Let us consider the minimal perimeter extension $\tilde F_i$ of $F_i$ with respect to the perimeter. We then have
  \begin{equation}\label{eq:F_i_estimate}
   \Pe(\tilde{F}_i) \le  \mathcal{L}^2(\tilde{F}_i) + \Pe(\tilde{F}_i) \le C_{\rm Per} (\mathcal{L}^2(F_i)+ \Pe(F_i,E)).    
  \end{equation}
  By the minimality of $\tilde F_i$ we have
 \begin{equation}\label{eq:F_i_boundary}
 \partial^M\tilde F_i\cup [\gamma_i(0),\gamma_i(1)] = \partial\tilde F_i \cup [\gamma_i(0),\gamma_i(1)].
 \end{equation}
 
  
  We claim that there exists an injective curve
  \begin{equation}
   \alpha_i \subset C_i := \left(\partial\tilde F_i \cup [\gamma_i(0),\gamma_i(1)]\right)
   \cap \left(\partial(A_i\triangle \tilde F_i) \cup (\partial \tilde F_i \cap \gamma_i) \right)
  \end{equation}
  connecting  $\gamma_i(0)$ to $\gamma_i(1)$.
  In order to see this, notice first that by \eqref{eq:F_i_estimate} and \eqref{eq:F_i_boundary} the set $C_i$ has finite 1-dimensional Hausdorff measure. Thus, it is enough to prove that $\gamma_i(0)$ and $\gamma_i(1)$ are in the same connected component of $C_i$. Suppose that this is not the case. 
  Then there exists a Jordan loop $\Gamma \subset \R^2 \setminus C_i$ so that $\gamma_i(0)$ is in the bounded component of $\R^2 \setminus \Gamma$ and $\gamma_i(1)$ in the unbounded component.
  By moving $\Gamma$ slightly, if necessary, $\Gamma \cap [\gamma_i(0),\gamma_i(1)]$ is finite. Moreover, since by assumption each point $z \in \Gamma \cap [\gamma_i(0),\gamma_i(1)]$
  is not in $\partial(A_i \Delta \tilde F_i)$, again by moving $\Gamma$ slightly we may assume that for these points $z$ we have $\Gamma \cap B(z,r)\cap \tilde F^i \ne \emptyset \ne \Gamma \cap B(z,r)\setminus \tilde F^i$ for every $r > 0$.
  Since $\Gamma \cap [\gamma_i(0),\gamma_i(1)]$ is odd, there exists a point $w \in \partial \tilde F^i \setminus [\gamma_i(0),\gamma_i(1)]$. However, if $w \in \gamma_i$, we have a contradiction as then $w \in C_i$. Then, if $w \notin \gamma_i$, we have $w \in \partial (A_i \Delta \tilde F_i)$ and again $w \in C_i$, giving a contradiction. This proves the claim and the existence of $\alpha_i$.
    

  
  
  
  
  We have
  \[
  \ell(\alpha_i) \le \|\gamma_i(0)-\gamma_i(1)\| + \Pe(\tilde F_i)
  \]
  due to \eqref{eq:F_i_estimate} and \eqref{eq:F_i_boundary}.
  Using $\alpha_i$, we construct a new curve $\beta_i$ in $\Omega$ connecting $\gamma_i(0)$ to $\gamma_i(1)$; see Figure \ref{fig:beta_i}.
  \begin{figure}
    \centering
\includegraphics[width=0.6\linewidth]{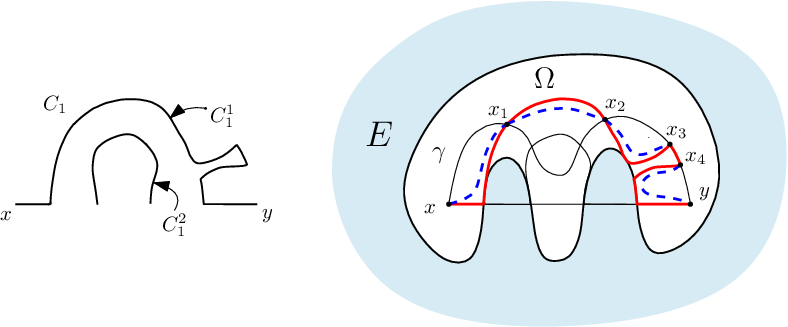}
    \caption{This is a qualitative description of the construction of \(C_1\), \(\alpha_1\) and \(\beta_1\) for the example in Figure \ref{fig:Modification_in_Omega}. On the left-hand side, we have the set \(C_1\), which in this case consists of two connected components, \(C_1^1\) and \(C^2_1\). Then, in the example, the image of the curve \(\alpha_1\) is exactly \(C_1^1\).
    Starting from \(\alpha_1\), by making use of Lemma \ref{lma:quasi_int}, we build \(\beta_1\) by replacing the four subcurves of \(\alpha_1\) connecting the points \(x\) to \(x_1\), \(x_1\) to \(x_2\), \(x_2\) to \(x_3\), and \(x_4\) to \(y\) by the dashed curves connecting the same points, while the subcurve of \(\alpha_1\) connecting \(x_3\) to \(x_4\) remains unchanged. 
    }
\label{fig:beta_i}
\end{figure}
  This is done by taking each maximal subcurve $\tilde \beta$ of $\alpha_i$ for which at most the endpoints intersect $\gamma_i$ and replacing it by a curve given by Lemma \ref{lma:quasi_int} when used in the connected component of $A_i\triangle \tilde F_i$ to whose boundary $\tilde \beta$ belongs to. (Notice that the connected component of $A_i\triangle \tilde F_i$ need not be a Jordan domain, but by removing part of it one can make it into one without changing the fact that the Jordan curve $\tilde \beta$ is on its boundary.) The remainder of the curve $\beta_i$ is defined as $\alpha_i$. This part is contained in $\gamma_i\cap \partial^M\tilde F_i$.
  
  Lemma \ref{lma:quasi_int} then gives
  \[
  \ell(\beta_i) \le \|\gamma_i(0)-\gamma_i(1)\| + \Pe(\tilde F_i) + 2^{-i}\varepsilon. 
  \]  

  We have $[0,1] = L \cup \bigcup_{i \in I} [a_i,b_i]$, where $I$ is countable, $\gamma\restr{L} \subset [0,1](y-x)$ and $\gamma\restr{[a_i,b_i]} = \gamma_i \left( \frac{t-a_i}{b_i-a_i}
  \right)$ for every $i \in I$.
  We then define the final curve $\tilde \gamma \colon [0,1] \to \Omega$ as
  \[
  \tilde\gamma(t) = \begin{cases}
    \beta_i\left(\frac{t-a_i}{b_i-a_i} \right),& \text{ if }t \in [a_i,b_i],\\
    \gamma(t),& t \in L.
   \end{cases}
  \]

  Combining the estimates for $\ell(\beta_i)$, we have
  \[
   \ell(\tilde\gamma) \le \|x-y\| + \sum_i \Pe(\tilde F_i) + \varepsilon.
  \]
  Since $E$ is a $BV$-extension set, we have
  \begin{equation}\label{eq:oneloopestimate}
    \mathcal L^2(\tilde F_i) + \Pe(\tilde F_i) \le C_{\rm Per}(\mathcal L^2( F_i) + \Pe(F_i,E) ).
  \end{equation}
  It follows from the definition of the sets $A_i$ that $A_i \cap A_j =\emptyset$ if $i \neq j$. Therefore, by the definition of $F_i$, we have $F_i \cap F_j =\emptyset$ if $i \neq j$. This implies $\sum_i \mathcal{L}^2(F_i) \le \mathcal{L}^2(E)$.
  
  Moreover, since $\gamma_i \subset \Omega$, we have $\partial^M F_i \cap E \subset [\gamma_i(0),\gamma_1(1)]$, and thus
  \begin{equation}
  \label{eq:bound_PeFiE}
      \Pe(F_i,E) = \mathcal{H}^1 (\partial^M F_i \cap E) \le \|\gamma_i(0)-\gamma_i(1)\|.
  \end{equation}
  
  Towards showing (1), the right-hand side of \eqref{eq:oneloopestimate} can be summed over $i$ and estimated as
  \begin{equation}
  \label{eq:sum_bv_norm_estimated_by_segment}
  \sum_i(\mathcal L^2( F_i) + \Pe(F_i,E)) \stackrel{\eqref{eq:bound_PeFiE}}{\le} \mathcal L^2(E) + \sum_i\|\gamma_i(0)-\gamma_i(1)\| \le \mathcal L^2(E) + \|x-y\|.
  \end{equation}
  This gives
  \begin{equation}
  \label{eq:estimate_internal_distance}
      \begin{aligned}
   \sfd_{\Omega}(x,y) & \le \ell(\tilde\gamma) \le \|x-y\| + \sum_i \Pe(\tilde F_i) + \varepsilon\\
   & \le \|x-y\| + \sum_i\left(\mathcal L^2(\tilde F_i) + \Pe(\tilde F_i)\right) + \varepsilon\\
   & \stackrel{\eqref{eq:oneloopestimate}}{\le} \|x-y\| + C_{\rm Per}\sum_i(\mathcal L^2( F_i) + \Pe(F_i,E)) + \varepsilon\\ 
   & \stackrel{\eqref{eq:sum_bv_norm_estimated_by_segment}}{\le} (1+C_{\rm Per})\|x-y\| + C_{\rm Per}\mathcal L^2(E) + \varepsilon.
  \end{aligned}
  \end{equation}
  We distinguish two cases. If $\dist(x,E) \le 1$ and $\dist(y,E) \le 1$, we can continue the previous estimate and we have, by triangular inequality,
    \begin{equation}\label{eq:close}
        \sfd_{\Omega}(x,y) \le \|x-y\| + C_{\rm Per}(2+{\rm diam}(E)) + C_{\rm Per}\mathcal L^2(E) + \varepsilon.
    \end{equation}
    In the case where at least one of the inequalities $\dist(x,E) > 1$ and $\dist(y,E) > 1$ holds, we can argue as follows. We can first travel along the segment $[x,y]$ first closer to $E$ and thus we select two points $\tilde{x},\tilde{y} \in \Omega$ such that $[x,\tilde{x}] \subset \Omega$, $[\tilde{y},y] \subset \Omega$, $\dist(\tilde{x},E) \le 1$, 
    and $\dist(\tilde{y},E) \le 1$. This is possible due to the assumption $[x,y] \cap E \ne \emptyset$.
    The estimate \eqref{eq:close} holds for the points $\tilde{x},\tilde{y}$, and so
    \begin{equation*}
    \begin{aligned}
     \sfd_{\Omega}(x,y) & \le \sfd_{\Omega}(x,\tilde x) + \sfd_{\Omega}(\tilde x,\tilde y) + \sfd_{\Omega}(\tilde y,y)\\
     & \le \|x-\tilde x\|  + \|\tilde x-\tilde y\| + C_{\rm Per}(2+{\rm diam}(E)) + C_{\rm Per}\mathcal L^2(E) + \varepsilon
     + \|\tilde y-y\|\\
     & = \|x-y\| + C_{\rm Per}(2+{\rm diam}(E)) + C_{\rm Per}\mathcal L^2(E) + \varepsilon.
    \end{aligned}
    \end{equation*}

  We now prove (2). 
  We fix $\delta:=(4 C_{\rm Per}\pi)^{-1}$ and let $0 <\varepsilon \le \delta$.
  Since in this case we assume that $\sfd_\Omega(x,y)< \delta$, we have that $\ell(\gamma) \le \delta +\varepsilon$.
  By the definition of the sets $F_i$'s, we have that $\cup_i F_i \subset B(0,\ell(\gamma))$. In particular, using again that the sets $\{F_i\}_i$ are pairwise disjoint, we have 
  \[
  \sum_i(\mathcal L^2( F_i) + \Pe(F_i,E)) \le \pi\ell(\gamma)^2 + \sum_i\|\gamma_i(0)-\gamma_i(1)\| \le 2 \pi\delta\ell(\gamma) + \|x-y\|.
  \]
  To conclude the estimate, by using the previous inequality, we have
  \begin{align*}
     \sfd_{\Omega}(x,y) 
   & \le \|x-y\| + C_{\rm Per}\sum_i(\mathcal L^2( F_i) + \Pe(F_i,E))+ \varepsilon\\ 
   & \le (1+C_{\rm Per})\|x-y\| + 2 C_{\rm Per} \pi\delta\ell(\gamma) + \varepsilon\\
   & \le (1+C_{\rm Per})\|x-y\| + \frac{1}{2}\ell(\gamma) + \varepsilon\\
   & \le (1+C_{\rm Per})\|x-y\| + \frac{1}{2}\sfd_{\Omega}(x,y) + 2\varepsilon,
  \end{align*}
  and taking the limit as $\varepsilon \to 0$ we obtain
  \[
   \sfd_{\Omega}(x,y) \le 2(1+C_{\rm Per})\|x-y\| \le {4 C_{\rm Per}}\|x-y\|.
  \]


 {\color{blue}{\sc Step 2:}} Let $R>0$ be large enough so that $E \subset B(0,R)$. We prove that $(\Omega \cap B(0,R),\sfd_\Omega)$ is totally bounded.
 
 Take $0 <\varepsilon < \delta/6$ and consider a maximal $\varepsilon$-separated net $\{x_i\}_{i \in I}$ in $\Omega \cap B(0,R)$ with respect to $\sfd_\Omega$. We form a graph $G = (\mathcal{V},\mathcal{E})$ where $\mathcal{V} = \{x_i\}_{i \in I}$ and 
 \begin{equation*}
     \mathcal{E} = \left\{(x_i,x_j) \in \mathcal{V}\times \mathcal{V}:\,x_i \neq x_j \text{ and such that }\sfd_\Omega(x_i,x_j) \le 3\varepsilon\right\}.
 \end{equation*}
 Our aim is to show that $G$ is a finite graph.
 Since $(\Omega\cap B(0,R),\sfd_\Omega)$ is a length space, $G$ is connected.

 Let $x_k,x_j \in \mathcal{V}$ with $k \neq j$. By (1), there exists an injective curve $\gamma \subset \Omega$ connecting $x_k$ to $x_j$ with $\ell(\gamma) \le C + 2R$. Let $\{y_i\}_{i=1}^N$ be a collection of points in $\gamma$ so that $x_k = y_1$, $x_j = y_N$, $\sfd_\Omega(y_{i},y_{i+1})\le \varepsilon$, and $N-1 \le \ell(\gamma)/\varepsilon$.
 Since $\{x_i\}$ is a maximal $\varepsilon$-separated net, for every $y_i$ there exists $x_{n_i} \in \mathcal{V}$ with $\sfd_\Omega(y_i,x_{n_i})<\varepsilon$. Then,
 \[
  \sfd_\Omega(x_{n_i},x_{n_{i+1}}) \le \sfd_\Omega(y_i,x_{n_i}) + \sfd_\Omega(y_i,y_{i+1}) + \sfd_\Omega(y_{i+1},x_{n_{i+1}}) \le 3\varepsilon
 \]
 and so $\{x_{n_i},x_{n_{i+1}}\} \in \mathcal{E}$. This means that the distance between $x_k$ and $x_j$ is at most $N-1$ in the graph. Since the upper bound on $N$ is independent of $k$ and $j$, the diameter of the graph $G$ is at most $\frac{1}{\varepsilon}(C+2R)$.
 
 Condition (2) now implies that if $\{x_i,x_j\},\{x_i,x_k\} \in \mathcal{E}$, then, since $\sfd_\Omega(x_j,x_k) \le \sfd_\Omega(x_i,x_j) + \sfd_\Omega(x_i,x_k)\le 6\varepsilon < \delta$
\[
\|x_j-x_k\| \ge \frac{1}{C}\sfd_\Omega(x_j,x_k) \ge \frac{\varepsilon}{C},
\]
while we also have
\[
 \|x_j-x_i\| \le \sfd_\Omega(x_j,x_i) \le 3\varepsilon.
\]
 Thus, a simple volume argument gives that the degree of any vertex $x_i$ is bounded by a constant.

 The bounds on the diameter and degree of $G$ imply a bound on the size of $G$. In particular, $G$ is finite, which in turn implies that $(\Omega \cap B(0,R),\sfd_\Omega)$ is totally bounded.

 {\color{blue}{\sc Step 3:}}
 We prove that (1) and (2) hold for points that belongs to $\overline{\Omega}$.
 
 Let $x,y \in \overline{\Omega}$ and take sequences $(x_i),(y_i) \subset \Omega$ with $\|x_i-x\|\le 2^{-i}$ and $\|y_i-y\| \le 2^{-i}$ for all $i$. By Step 2, $\Omega \cap B(0,R)$ can be covered by a finite number of balls, computed with respect to the distance $\sfd_\Omega$, with radius less or equal than $\delta/2$. Hence, there exist subsequences of $(x_i)$ and $(y_i)$ so that $\sfd_\Omega(x_i,x_j) < \delta$ and $\sfd_\Omega(y_i,y_j) < \delta$ for all $i,j$. Now, the rest of the argument is a standard concatenation of curves using (2) in $\Omega$. Let us still write the argument for the convenience of the reader. 
 
Let us first check (1) for $x,y \in \overline{\Omega}$. For each $i\in \mathbb{N}$, let $\gamma_{x,i}$ be a curve in $\Omega$ joining $x_{i+1}$ to $x_i$ with $\ell(\gamma_{x,i}) \le C\|x_{i+1}-x_i\| \le C2^{-i+1}$ given by (2). Similarly, for each $i\in \mathbb{N}$, let $\gamma_{y,i}$ be a curve in $\Omega$ joining $y_{i}$ to $y_{i+1}$ with $\ell(\gamma_{y,i}) \le C2^{-i+1}$. Then, for $j \ge 3$, define $\gamma_j \colon [0,1] \to \Omega\cup\{x,y\}$ as
 $\gamma_{x,i}$ on $[2^{-i},2^{-i+1}]$ and $\gamma_{y,i}$ on $[1-2^{-i+1},1-2^{-i}]$
 for all $i \ge j$ and let $\gamma_j$ on $[2^{-j+1},1-2^{-j+1}]$ be defined as the curve in $\Omega$ joining $x_{j}$ to $y_j$ with length less than $\sfd_\Omega(x_j,y_j)+2^{-j}$. Moreover, we define $\gamma_j(0):=x$ and $\gamma_j(1):=y$ and notice that $\gamma \in C([0,1],\mathbb{R}^2)$.
By using (1) for $x_j,y_j$, we can estimate 
\begin{align*} \sfd_\Omega(x,y)\le \ell(\gamma_j) &\le \sum_{i=j}^\infty \left(\ell(\gamma_{x,i}) + \ell(\gamma_{y,i})\right)
+ \sfd_\Omega(x_j,y_j)+2^{-j} \\
 &\le \sfd_\Omega(x_j,y_j) + C2^{-j+4} \le C + \|x_j-y_j\| + C2^{-j+4}\\
 & \le C + \|x-y\| + \left(\|x-x_j\| + \|y-y_j\| + C2^{-j+4}\right),
\end{align*}
 where 
 \[
 \|x-x_j\| + \|y-y_j\| + C2^{-j+4} \to 0
 \]
 as $j \to \infty$. This shows that (1) holds. 
 
 For (2) we assume that $\sfd_{\Omega}(x,y)< \delta$ holds and let $\gamma \colon[0,1] \to \Omega\cup\{x,y\}$ with $\ell(\gamma) < \delta$. Then, for $0<\varepsilon < \frac{1}{2}(\delta - \ell(\gamma))$ we can take $\tilde x,\tilde y \in \gamma \cap \Omega$ with $\sfd_\Omega(x,\tilde x)< \varepsilon$ and  $\sfd_\Omega(y,\tilde y)< \varepsilon$. We have
 $\sfd_\Omega(\tilde x, \tilde y) < \delta$ and so by (2) we get 
 \[
 \sfd_{\Omega}(\tilde x,\tilde y) \le C\|\tilde x-\tilde y\| \le C\|x-y\| + 2C\varepsilon.
 \]
 Thus,
 \[
  \sfd_{\Omega}(x,y) \le  \sfd_{\Omega}(\tilde x,\tilde y) + 2\varepsilon \le C\|x-y\| + 2(C+1)\varepsilon,
 \]
 giving (2) as $\varepsilon \to 0$.
 
 {\color{blue}{\sc Step 4:}}
 We prove that $(\bar{\Omega} \cap B(0,R),\sfd_\Omega)$ is totally bounded for every $R>0$. This in particular implies, since $E \subset B(0,R)$ for $R$ sufficiently large, that $(\partial \Omega,\sfd_\Omega)$ is totally bounded.
 It is enough to prove that for every point $x\in \bar{\Omega} \cap B(0,R)$ there exists a point $x_{i_0} \in\{x_i\}_i$, where the points $\mathcal{V}=\{x_i\}_i$ are given in Step 2, such that $\sfd_\Omega(x,x_{i_0}) < 2\varepsilon$.
 Since $\{x_i\}_i$ is an $\varepsilon$-separated net for $\Omega \cap B(0,R)$ the previous statement is satisfied for points belonging to this set.
 Therefore, fix a point $x \in \partial \Omega$. 
 We consider an arbitrary point $y \in \Omega$ and by Step 3, we have that $\sfd_\Omega(x,y)<\infty$. Consider one rectifiable path $\gamma$ for the definition of $\sfd_\Omega$ and we can find $p_x \in \gamma \cap \Omega$ such that $\sfd_{\Omega}(x,p_x)<\varepsilon$. By the definition of the set $\{x_i\}_i$, there exists $\{x_{i_0}\}$ such that $\sfd_\Omega(p_x,x_{i_0}) < \varepsilon$. By applying the triangular inequality, we conclude.

 {\color{blue}{\sc Step 5:}} Finally, (4) follows by taking a converging (in the Euclidean topology) sequence $(y_i) \subset \overline{B}_{\sfd_\Omega}(x,r)$. Let $y$ be the limit of $(y_i)$. By taking a subsequence of $(y_i)$ we may assume $\|y-y_i\| \le 2^{-i}$. By using (2) between $y_i$ and $y_{i+1}$ and by concatenating curves similarly to the beginning of Step 3, we get that
 \[
 \sfd_\Omega(y,y_i) \le C2^{-i+1}
 \]
 for all $i \in \mathbb N$. Thus,
 \[
 \sfd_\Omega(x,y) \le r + C2^{-i+1}
 \]
 for all $i \in \mathbb N$ and so
 $y \in \overline{B}_{\sfd_\Omega}(x,r)$.
\end{proof}

\begin{remark}[On the local quasiconvexity constant]
    We point out that, by looking at the estimate on the last lines of Step 1 of the proof, we see that if $\sfd_\Omega(x,y)< \delta$, then $\sfd_\Omega(x,y)\le 4 \|T\|_{BV} \|x-y\|$. So the local quasiconvexity constant of the complement of $E$ depends only on the norm of the extension operator $T \colon BV(E) \to BV(\mathbb{R}^2)$.
\end{remark}






Lemma \ref{lma:quasiconvexity_full} above helps in getting the following conclusion from Lemma \ref{lma:connected_full}. We will need it later to prove Proposition \ref{prop:kicking_for_BV}.

\begin{proposition}\label{lma:small_intersect}
Let $E \subset \mathbb R^2$ be a bounded $BV$-extension set. Let
$\{\Omega_i\}_{i \in I}$ be the connected components of \(\R^2\setminus \overline{E^1}\). Then
$\partial \Omega_i \cap \partial \Omega_j$ is finite for every $i \ne j$.
\end{proposition}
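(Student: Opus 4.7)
The plan is to argue by contradiction. Suppose $\partial\Omega_i\cap\partial\Omega_j$ is infinite for some $i\neq j$. By Proposition \ref{prop_closedrepr}, $\overline{E^1}$ is itself a bounded closed $BV$-extension set, so Lemma \ref{lma:quasiconvexity_full} applies to the components of $\R^2\setminus\overline{E^1}$ with constants $\delta, C>0$. Using the boundedness of $\overline{E^1}$ together with the total boundedness of $(\partial\Omega_i,\sfd_{\Omega_i})$ and $(\partial\Omega_j,\sfd_{\Omega_j})$ from Lemma \ref{lma:quasiconvexity_full}(3), I will extract a sequence $(p_n)$ of distinct points in $\partial\Omega_i\cap\partial\Omega_j$ with $\sfd_{\Omega_i}(p_n,p)\to 0$ and $\sfd_{\Omega_j}(p_n,p)\to 0$ for some limit $p\in\overline{E^1}$ (using that $\partial\Omega_k\subset\overline{E^1}$ for every complementary component).

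For all large $n\neq m$, Lemma \ref{lma:quasiconvexity_full}(2) gives $\sfd_{\Omega_i}(p_n,p_m),\sfd_{\Omega_j}(p_n,p_m)\leq C\|p_n-p_m\|$, so by the definition of the internal distance there exist injective curves $\gamma_i^{n,m}\subset\Omega_i\cup\{p_n,p_m\}$ and $\gamma_j^{n,m}\subset\Omega_j\cup\{p_n,p_m\}$ from $p_n$ to $p_m$ of length at most $2C\|p_n-p_m\|$. Since $\Omega_i\cap\Omega_j=\emptyset$, these arcs meet only at their endpoints, so $J_{n,m}:=\gamma_i^{n,m}\cup\gamma_j^{n,m}$ is a Jordan loop bounding a region $D_{n,m}$. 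Setting $F_{n,m}:=D_{n,m}\cap\overline{E^1}$ and noticing that $J_{n,m}\setminus\{p_n,p_m\}\subset\Omega_i\cup\Omega_j$ is disjoint from $\overline{E^1}$, every point of $\overline{E^1}\setminus\{p_n,p_m\}$ has a Euclidean neighbourhood lying entirely in $D_{n,m}$ or entirely in its exterior; hence the measure-theoretic boundary of $F_{n,m}$ in $\overline{E^1}$ is contained in the finite set $\{p_n,p_m\}$. A Lipschitz approximation of $\chi_{F_{n,m}}$ whose gradient is concentrated in small balls around $p_n, p_m$ (available since $J_{n,m}\setminus\{p_n,p_m\}$ sits at positive Euclidean distance from $\overline{E^1}$ outside any neighbourhood of the endpoints) combined with the Euclidean scaling $\mathcal L^2(B(x,r))=\pi r^2$ then yields $\Pe_{\overline{E^1}}(F_{n,m})=0$. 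Moreover, $D_{n,m}$ contains points of both $\Omega_i$ and $\Omega_j$ adjacent to the interiors of $\gamma_i^{n,m}$ and $\gamma_j^{n,m}$, so by connectedness it cannot sit inside a single complementary component of $\overline{E^1}$; hence $D_{n,m}\cap\overline{E^1}\neq\emptyset$, and Proposition \ref{prop:measdens} applied at any $x\in D_{n,m}\cap\overline{E^1}$ gives $\mm(F_{n,m})>0$.

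To conclude, the collection of measurable $A\subset\overline{E^1}$ with $\Pe_{\overline{E^1}}(A)=0$ forms, modulo null sets, a Boolean algebra with at most $N$ atoms by Lemma \ref{lma:connected_full}, hence at most $2^N$ equivalence classes. Since each $F_{n,m}$ lies in this algebra, pigeonhole yields some $F^*$ coinciding mod null with $F_{n,m}$ for infinitely many pairs, so that $\mm(F^*)=\mm(F_{n,m})>0$. However, the planar isoperimetric inequality combined with $\ell(J_{n,m})\leq 4C\|p_n-p_m\|$ gives $\mm(F_{n,m})\leq\mm(D_{n,m})\leq(4C\|p_n-p_m\|)^2/(4\pi)\to 0$ as $n,m\to\infty$, forcing $\mm(F^*)=0$, the desired contradiction. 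The main obstacle is the Lipschitz-approximation argument for $\Pe_{\overline{E^1}}(F_{n,m})=0$: the transition zone must be designed to pick up the jump of $\chi_{F_{n,m}}$ only in small balls around $p_n,p_m$, which is possible precisely because $J_{n,m}\setminus\{p_n,p_m\}$ is a compact subset of the open set $\Omega_i\cup\Omega_j$, hence stays at positive Euclidean distance from $\overline{E^1}$ outside small neighbourhoods of $\{p_n,p_m\}$.
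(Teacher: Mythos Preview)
Your overall strategy is sound and follows a genuinely different path from the paper's. Both proofs build a Jordan loop $J$ from arcs in $\Omega_i$ and $\Omega_j$ joining two points of $\partial\Omega_i\cap\partial\Omega_j$, and both need that the interior of $J$ intersected with $\overline{E^1}$ has positive measure and zero relative perimeter. From there the paper argues by \emph{iterative refinement}: after the first loop splits $\overline{E^1}$ into two nontrivial zero-perimeter pieces, one selects a third point $x_3$ in whichever piece still contains infinitely many points of $\partial\Omega_i\cap\partial\Omega_j$, builds a second loop through $x_1$ and $x_3$, refines the partition, and after finitely many steps obtains more pieces than Lemma~\ref{lma:connected_full} permits. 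You instead extract a convergent subsequence via Lemma~\ref{lma:quasiconvexity_full}(3), observe that the zero-perimeter subsets of $\overline{E^1}$ form (mod null sets) a finite Boolean algebra by Lemma~\ref{lma:connected_full}, and use pigeonhole together with the isoperimetric bound $\mathcal L^2(D_{n,m})\le \ell(J_{n,m})^2/(4\pi)$ to force one class to have simultaneously positive and arbitrarily small measure. Your route uses more of Lemma~\ref{lma:quasiconvexity_full} (parts (2) and (3)), whereas the paper only needs the existence of connecting curves in $\overline\Omega$.

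Two points deserve attention. For $\Pe_{\overline{E^1}}(F_{n,m})=0$ your Lipschitz-approximation sketch is not complete as written: an approximant of $\chi_{D_{n,m}}$ that agrees with $\chi_{F_{n,m}}$ on $\overline{E^1}\setminus(B(p_n,r)\cup B(p_m,r))$ must have Lipschitz constant at least of order $1/\min(\eta_r,r)$, where $\eta_r=\dist\big(J_{n,m}\setminus(B(p_n,r)\cup B(p_m,r)),\,\overline{E^1}\big)$; nothing forces $\eta_r$ to be comparable to $r$ (the arcs may hug $\partial\Omega_i$ or $\partial\Omega_j$ near the endpoints), so the product with $\mathcal L^2(B(p_n,r)\cup B(p_m,r))\sim r^2$ need not tend to zero. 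The paper bypasses this with the one-line monotonicity
\[
\Pe_{\overline{E^1}}(D_{n,m},\overline{E^1})\le \Pe(D_{n,m},\overline{E^1})=\mathcal H^1(J_{n,m}\cap\overline{E^1})=\mathcal H^1(\{p_n,p_m\})=0,
\]
which follows since $\mathcal L^2\restr{\overline{E^1}}\le\mathcal L^2$. Second, your pigeonhole only yields infinitely many pairs in a single class, which a priori could all share a fixed first index so that $\|p_n-p_m\|\not\to 0$; restricting the pigeonhole from the outset to pairs of the form $(n,n+1)$ (or $(2k-1,2k)$) removes this immediately.
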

\begin{proof}
Suppose that there exist $i \neq j$ so that
$\partial \Omega_i \cap \partial \Omega_j$ is infinite.
By Lemma \ref{lma:quasiconvexity_full}, for any two $x_1,x_2 \in \partial \Omega_i \cap \partial \Omega_j$, $x_1 \ne x_2$, there exist injective curves $\gamma_i \subset \Omega_i\cup\{x_1,x_2\}$ and $\gamma_j \subset \Omega_j\cup\{x_1,x_2\}$ connecting $x_1$ to $x_2$. Now, $\gamma_i\cup \gamma_j$ forms a Jordan loop. Let $U_1,U_2$ be the two connected components of $\mathbb R^2 \setminus (\gamma_i\cup\gamma_j)$. Then, we claim that $\mathcal L^2(\overline{E^1}\cap U_k)>0$ and $\Pe_{\overline{E^1}}(\overline{E^1}\cap U_k, \overline{E^1})=0$ for both $k\in\{1,2\}$.

We prove that $\mathcal{L}^2(\overline{E^1}\cap U_1)>0$. By contradiction, assume that $\mathcal{L}^2(\overline{E^1}\cap U_1)=0$. If $\overline{E^1}\cap U_1=\emptyset$, then $\Omega_i$ and $\Omega_j$ both belong to the same connected component of $U_1$, that is impossible.
Therefore, we can only have $\overline{E^1}\cap U_1\neq\emptyset$. Let $x \in \overline{E^1}\cap U_1$. Hence, there exists a sequence $\{y_n\}_n \subset E^1 \cap U_1$ converging to $x$. Fix one such $n$ and we have that, for some $r>0$, $B(y_n,r)\subset U_1$ and $\mathcal{L}^2(B(y_n,r) \cap E)>0$. Since $\mm(E \Delta \overline{E^1})=0$, we have $\mathcal{L}^2(B(y_n,r) \cap \overline{E^1})>0$, thus giving a contradiction. A similar argument holds for $U_2$.

We prove that $\Pe_{\overline{E^1}}(\overline{E^1}\cap U_k, \overline{E^1})=0$ for both $k\in\{1,2\}$. We prove it for $U_1$. We have
\begin{equation}
    \label{eq:submodularity}\Pe_{\overline{E^1}}(\overline{E^1}\cap U_1, \overline{E^1}) \le \Pe_{\overline{E^1}}(\overline{E^1}, \overline{E^1}) + \Pe_{\overline{E^1}}(U_1, \overline{E^1}). 
\end{equation}

Notice that $\Pe_{\overline{E^1}}(\overline{E^1}, \overline{E^1})=0$. Moreover, $\Pe_{\overline{E^1}}(U_1, \overline{E^1})\le \Pe(U_1, \overline{E^1}) =\mathcal{H}^1(\partial U_1 \cap \overline{E^1})=0$, where the last equality follows from the fact that the set $\partial U_1 \cap \overline{E^1}$ consists of only two points. This gives that $\Pe_{\overline{E^1}}(\overline{E^1}\cap U_1, \overline{E^1})=0$. The proof for $U_2$ is similar.
 
This means that we can decompose $\overline{E^1}$ into two parts. We then continue by selecting a point $x_3 \in U_k \cap \partial \Omega_i \cap \partial \Omega_j$ for a $k\in \{1,2\}$ for which
$U_k \cap \partial \Omega_i \cap \partial \Omega_j$ is infinite.
Assume it is $U_1$, the other case being similar.
Then, the complement of the Jordan loop as above connecting $x_1$ to $x_3$ consists of two connected components $V_1,V_2$. Thus,  $U_1 \cap \overline{E^1}$ is decomposed into two parts, $U_1 \cap V_1 \cap \overline{E^1}$ and $U_1 \cap V_2 \cap \overline{E^1}$. By a similar argument as above, both sets have positive measure. Moreover, $\Pe_{\overline{E^1}}(\overline{E^1}\cap U_1 \cap V_k,\overline{E^1})=0$ for $k \in \{1,2\}$. We prove for $k=1$, the other case being similar. Indeed,
\begin{equation*}
    \Pe_{\overline{E^1}}(\overline{E^1}\cap U_1 \cap V_1, \overline{E^1}) \le \Pe_{\overline{E^1}}(\overline{E^1}, \overline{E^1}) + \Pe_{\overline{E^1}}(U_1 \cap V_1, \overline{E^1})\le \mathcal{H}^1(\partial U_1 \cap \overline{E^1})+ \mathcal{H}^1(\partial V_1 \cap \overline{E^1})=0.
\end{equation*}
Thus, we define $A_1:=\overline{E^1}\cap (\mathbb{R}^2\setminus U_1)$, $A_2:=\overline{E^1}\cap U_1 \cap V_1$ and $A_3:=\overline{E^1}\cap U_1 \cap (\mathbb{R}^2\setminus V_1)$.
We have $\overline{E^1}=A_1 \cup A_2 \cup A_3$ and
\begin{equation*}
    \mathcal{L}^2(A_i)>0,\,\quad \Pe_{\overline{E^1}}(A_i,\overline{E^1})=0\quad\text{for }i=\{1,2,3\}. 
\end{equation*}

We then continue recursively. Since we can continue this infinitely we obtain a contradiction with Lemma \ref{lma:connected_full} according to which the process should stop in $N$ steps. 
\end{proof}

\begin{remark}
In Lemma \ref{lma:small_intersect}, it is essential to take the connected components of the complement of \(\overline{E^1}\) instead of a general closed set \(E\). 
Indeed, even if Lemma \ref{lma:connected_full} holds both for \(E\) and \(\overline{E^1}\), the conclusion of Lemma \ref{lma:small_intersect} might not be satisfied by the set \(E\). An example is two closed squares linked by two segments, e.g.\ $E:= [-1,0]\times [0,1] \cup [1,2]\times [0,1] \cup [(-1,0),(1,0)] \cup [(-1,1),(1,1)]$.
\end{remark}

\subsection{From $BV$-extension to $W^{1,1}$-extension}

In the case of planar simply connected closed sets we can show that being $BV$-extension set implies being $W^{1,1}$-extension set.

%

%

\begin{lemma}\label{lem:kicking}
Let $E \subset \R^2$ be a compact ${BV}$-extension set. 
Then, there exists a constant $C \ge 1$ such that for any connected component $\Omega$ of $\R^2 \setminus E$, any set of finite perimeter $F \subset \R^2$ and any $\varepsilon>0$ there exists $\tilde{F} \subset \R^2$ such that
\begin{enumerate}
\item $F \setminus  \overline{\Omega} = \tilde{F} \setminus \overline{\Omega}$,
\item 
$\mathcal{H}^1 (\partial^M \tilde{F}\cap \overline{\Omega}) \leq C \mathcal{H}^1(\partial^M F\cap \overline{\Omega})$, 
\item $\mathcal{H}^1 (\partial^M \tilde{F} \cap \partial \Omega) = 0$, 
\item 
$\mathcal L^2(\tilde F \Delta F) \le \varepsilon$.
\end{enumerate}
\end{lemma}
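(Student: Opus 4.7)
The plan is to combine the Jordan-curve decomposition of planar sets of finite perimeter (Theorem \ref{thm:planardecomposition}) with the local quasiconvexity of the complementary components from Lemma \ref{lma:quasiconvexity_full}. First, I would apply Theorem \ref{thm:planardecomposition} to write $\partial^M F = \bigcup_{n} J_n$ (up to $\mathcal H^1$-null) as a countable family of rectifiable Jordan curves bounding Jordan domains $U_n$, with $\sum_n \mathcal H^1(J_n) = P(F,\mathbb R^2) < \infty$. I then modify each $J_n$ \emph{only inside} $\overline\Omega$ to obtain a new Jordan curve $\tilde J_n$ enclosing $\tilde U_n$, satisfying $\tilde J_n\setminus\overline\Omega = J_n\setminus\overline\Omega$, $\mathcal H^1(\tilde J_n\cap\partial\Omega) = 0$, $\mathcal H^1(\tilde J_n\cap\overline\Omega)\leq C\mathcal H^1(J_n\cap\overline\Omega)$, and $\mathcal L^2(U_n\Delta \tilde U_n)<\varepsilon/2^n$. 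Reconstructing $\tilde F$ from the $\tilde U_n$'s via Theorem \ref{thm:planardecomposition}(4), the four conditions of the lemma follow by summation.

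The core step is modifying a maximal closed subarc $\alpha$ of $J_n$ contained in $\overline\Omega$, whose endpoints $p,q$ lie in $\partial\Omega$ (unless $\alpha=J_n$). The aim is to produce a replacement $\tilde\alpha \subset \Omega\cup F_\alpha$, with $F_\alpha\subset\partial\Omega$ a finite set containing $p,q$, such that $\mathcal H^1(\tilde\alpha)\leq C\mathcal H^1(\alpha)$. The tool is Lemma \ref{lma:quasiconvexity_full}(2): for $x,y\in\overline\Omega$ with $\sfd_\Omega(x,y)<\delta$, there is a curve in $\Omega\cup\{x,y\}$ of length $\leq C\|x-y\|$. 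Parameterizing $\alpha$ by arclength on $[0,L]$, I exploit the open set $\alpha^{-1}(\Omega)\subset[0,L]$: on each connected component $(c_k,d_k)$ the restriction $\alpha|_{[c_k,d_k]}$ is itself admissible for $\sfd_\Omega$, so $\sfd_\Omega(\alpha(c_k),\alpha(d_k))\leq d_k-c_k$. On the closed complement $\alpha^{-1}(\partial\Omega)$, one inserts a finer subdivision $\{x_i\}$ ensuring $\sfd_\Omega(x_{i-1},x_i)<\delta$; then Lemma \ref{lma:quasiconvexity_full}(2) on each piece and concatenation yield $\mathcal H^1(\tilde\alpha)\leq C\sum_i\|x_{i-1}-x_i\|\leq C \mathcal H^1(\alpha)$.

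The main obstacle is precisely this second subdivision on the parts of $\alpha$ lying in $\partial\Omega$, since the $\sfd_\Omega$-metric and the Euclidean metric are not topologically comparable there (a topologist's-comb-type phenomenon, explicitly acknowledged in the proof of Lemma \ref{lma:quasiconvexity_full}). I plan to handle this using the total boundedness of $(\partial\Omega,\sfd_\Omega)$ from Lemma \ref{lma:quasiconvexity_full}(3): fix once and for all a finite cover $\{B_{\sfd_\Omega}(q_i,\delta/4)\}_{i=1}^N$ of $\partial\Omega$, and subdivide $\alpha^{-1}(\partial\Omega)$ so that every two consecutive points $x_{i-1},x_i$ lie in a common ball, which guarantees $\sfd_\Omega(x_{i-1},x_i)<\delta/2$; property (4) of Lemma \ref{lma:quasiconvexity_full} is used to certify that this $\sfd_\Omega$-based subdivision is a legitimate refinement of the Euclidean arclength subdivision. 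The $\mathcal L^2$-budget is enforced by additionally constraining each $\tilde\alpha$ to an Euclidean tubular neighborhood of $\alpha$ of width $\rho_n$, chosen so that $\sum_n\rho_n\cdot\mathcal H^1(J_n)<\varepsilon$; this is compatible with the multiplicative length bound because the inequality in (2) is scale-invariant. Finally, the simplicity of each $\tilde J_n$ and mutual disjointness across $n$ are arranged inductively by choosing each perturbation within progressively shrinking open neighborhoods disjoint from the previously constructed $\tilde J_{n'}$, which is possible since each $\tilde J_{n'}$ is compact.
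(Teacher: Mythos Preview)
Your overall plan—Jordan decomposition plus local quasiconvexity plus total boundedness—matches the paper's ingredients, but the execution has a genuine gap precisely in the step you flag as the ``main obstacle.''

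The problem is the subdivision of $K\coloneqq\alpha^{-1}(\partial\Omega)$. You want finitely many points $x_0<\dots<x_M$ in $[0,L]$ with $\sfd_\Omega(\alpha(x_{i-1}),\alpha(x_i))<\delta$ for every $i$, so that Lemma \ref{lma:quasiconvexity_full}(2) applies piecewise. On the complementary intervals $(c_k,d_k)$ this is free, but on $K$ itself there is no reason for it to work: the map $t\mapsto \alpha(t)$ from $(K,|\cdot|)$ to $(\partial\Omega,\sfd_\Omega)$ need not be continuous (exactly the comb phenomenon you mention), so points close in arclength can be arbitrarily far in $\sfd_\Omega$. Your proposed fix via a finite $\sfd_\Omega$-cover $\{B_{\sfd_\Omega}(q_i,\delta/4)\}$ does not resolve this: the preimages $\alpha^{-1}(B_i)\cap K$ can be interleaved along $[0,L]$ with infinitely many alternations, so no finite subdivision can place every consecutive pair in a common ball. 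And when $\mathcal L^1(K)>0$—which is precisely when $\mathcal H^1(\partial^M F\cap\partial\Omega)>0$, the only nontrivial case—the complementary intervals do not exhaust $[0,L]$, so you cannot reduce to them either. The $\mathcal L^2$-control via thin tubes inherits the same defect: shrinking the tube forces a finer subdivision, which you cannot produce.

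The paper circumvents this by \emph{not} trying to process an arc $\alpha$ in one pass. Instead it iterates over the balls of a finite $\sfd_\Omega$-cover $\{\overline B_{\sfd_\Omega}(x_k,r)\}_{k=1}^N$ of $\partial\Omega$, modifying $F=F_0\to F_1\to\dots\to F_N=\tilde F$ one ball at a time. At step $k+1$ it re-applies Theorem \ref{thm:planardecomposition} to $F_k$ and selects sub-arcs $I_{n,j}$ with both endpoints in $\overline B_{\sfd_\Omega}(x_{k+1},r)$ (this is what guarantees $\sfd_\Omega$-closeness, without any continuity of $\alpha$ into $(\partial\Omega,\sfd_\Omega)$) and with at least half their length on $\partial\Omega$ (this bookkeeping condition makes the total added length telescope to $\leq C\,\mathcal H^1(\partial^M F\cap\partial\Omega)$). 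The modified set $F_{k+1}$ is then defined directly by unions and differences of the Jordan domains bounded by the original arcs and their $\Omega$-replacements, avoiding the delicate reconstruction of a global simple curve $\tilde J_n$. Finally, the $\mathcal L^2$-bound comes from choosing $r$ so small that $\mathcal L^2(B(E,2Cr)\setminus E)\leq\varepsilon$ in advance; since all replacement curves have length $\leq 2Cr$, every modification stays inside this tube.
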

\begin{proof}
We will use Lemma \ref{lma:quasiconvexity_full} to locally implement the ideas from \cite{BR21}. Let $\delta,C>0$ be the constants in the statement of Lemma \ref{lma:quasiconvexity_full}.
Without loss of generality we may assume that $\mathcal L^2(F) > 0$.
For a given $\varepsilon$, we take $r > 0$ such that 
\begin{equation}\label{eq:r_choice}
 r < \frac{\delta}{2}\qquad \text{and}\qquad \mathcal{L}^2(B(E,2Cr)\setminus E) \leq \varepsilon.  
\end{equation}
This can be done since $E$ is compact.
By Lemma \ref{lma:quasiconvexity_full} (3), there exists a cover $\bigl\{\overline{B}_{\sfd_{\Omega}}(x_{k},r) \bigr\}_{k=1}^{N}$ of the intersection of the boundaries $\partial F \cap \partial \Omega$ so that $N < \infty$ and $x_k \in \Omega$ for every $k$.

We proceed by modifying the set $F$ in one enlarged ball $\overline{B}_{\sfd_{\Omega}}(x_j,Cr)$ at a time. Let us set $F_0 = F$.
Suppose then that a set $F_k \subset \mathbb R^2$ of finite perimeter has been defined.
We construct the set $F_{k+1}$ as follows.

 We distinguish two cases. If $\mathcal{H}^1(\partial E \cap \partial^M F_k \cap \overline{B}_{\sfd_\Omega}(x_{k+1},r))=0$, we set $F_{k+1}=F_k$. If $\mathcal{H}^1(\partial E \cap \partial^M F_k \cap \overline{B}_{\sfd_\Omega}(x_{k+1},r))>0$, we argue as follows.
We decompose $\partial^M F_k$ into Jordan curves via Theorem \ref{thm:planardecomposition} so that
\[
\partial^M F_k = \bigcup_n J_n^+ \cup \bigcup_m J_m^-,
\]
where $J_n^+$ denotes the Jordan curves of the external boundary and $J_m^-$ the Jordan curves of the internal boundary.
Parametrize each of the Jordan curves by homeomorphisms (onto their image) $f_n^+ \colon \mathbb{S}^1 \to J_n^+$ and $f_m^- \colon \mathbb{S}^1 \to J_m^-$.

For every $n \in \mathbb{N}$, we consider 
$\overline{B}_{\sfd_{\Omega}}(x_{k+1},r) \cap \partial E \cap \partial J_n^+$. If $\mathcal{H}^1(\overline{B}_{\sfd_{\Omega}}(x_{k+1},r) \cap \partial E \cap \partial J_n^+)=0$, no  modification is needed. If
\[
\mathcal H^1(\overline{B}_{\sfd_{\Omega}}(x_{k+1},r) \cap \partial E \cap \partial J_n^+)>0
\]
we proceed as follows.
Let $\{I_{n,j}\}_j$ be a collection of closed arcs of $\mathbb S^1$ with endpoints $a_{n,j},b_{n,j}$ and disjoint interiors so that
\[
f_n^+(a_{n,j}), f_n^+(b_{n,j}) \in \overline{B}_{\sfd_{\Omega}}(x_{k+1},r),
\]
\[
\ell(f_n^+(I_{n,j})) \le \delta,
\]
\begin{equation}
 \label{eq:third_property}
 \mathcal H^1(f_n^+(I_{n,j})\cap \partial \Omega) \ge \frac12 \mathcal H^1(f_n^+(I_{n,j})),    
\end{equation}

and
\begin{equation}
    \label{eq:fourth_property}
    \mathcal{H}^1(\overline{B}_{\sfd_{\Omega}}(x_{k+1},r) \cap \partial E \cap \partial J_n^+ \setminus \bigcup_jf_n^+(I_{n,j})) = 0.
\end{equation}
Fix $j$. By definition of $f_n^+(a_{n,j}), f_n^+(b_{n,j})$ and the fact that $x_{k+1} \in \Omega$, we have that
\[
\sfd_\Omega(f_n^+(a_{n,j}), f_n^+(b_{n,j})) \le 2r <\delta.
\]
Therefore, using Lemma \ref{lma:quasiconvexity_full} we can take an injective curve $\gamma_{n,j}$ in $\Omega\cup \{f_n^+(a_{n,j}), f_n^+(b_{n,j})\}$ joining $f_n^+(a_{n,j})$ to $f_n^+(b_{n,j})$ and satisfying
\begin{equation}\label{eq:lengthbound1}
\ell(\gamma_{n,j}) \le C \|f_n^+(a_{n,j}) - f_n^+(b_{n,j})\|.
\end{equation}
Let $G_{n,j}$ be the bounded connected component of $\mathbb R^2 \setminus (f_n^+(I_{n,j}) \cup \gamma_{n,j})$.
The set $(a_{n,j},b_{n,j}) \setminus (f_n^+)^{-1}(\overline{B}_{\sfd_{\Omega}}(x_{k+1},r))$ is open and so it consists of countably many open intervals $(a_{n,j,l},b_{n,j,l}) \subset I_{n,j}$.
Let $I_{n,j,l}:=[a_{n,j,l},b_{n,j,l}]$.
For each $l$, let $\gamma_{n,j,l}$ be an injective curve in $\Omega\cup \{f_n^+(a_{n,j,l}), f_n^+(b_{n,j,l})\}$ joining $f_n^+(a_{n,j,l})$ to $f_n^+(b_{n,j,l})$ and satisfying
\begin{equation}\label{eq:lengthbound2}
\ell(\gamma_{n,j,l}) \le C \|f_n^+(a_{n,j,l}) - f_n^+(b_{n,j,l})\|.
\end{equation}
Let $G_{n,j,l}$ be the bounded connected component of $\mathbb R^2 \setminus (f_n^+([a_{n,j,l},b_{n,j,l}]) \cup \gamma_{n,j,l})$.

We repeat the above construction of $G_{n,j}$ and $G_{n,j,l}$ with $f_n^+$ replaced by $f_m^-$ and call the resulting sets
$H_{m,j}$ and $H_{m,j,l}$, respectively.

With these definitions we can then set
\begin{equation}\label{eq:Fk1_def}
 F_{k+1} \coloneqq F_k \cup \left(\bigcup_{n,j} G_{n,j} \setminus \bigcup_{n,j,l} G_{n,j,l}\right) \setminus \left(\bigcup_{m,j} H_{m,j} \setminus \bigcup_{m,j,l} H_{m,j,l}\right).
\end{equation}

Now that we have defined $F_k$ for all $k$, we simply set $\tilde F \coloneqq F_N$. Let us check that $\tilde{F}$ satisfies all the properties in the statement of the lemma.

By the definition \eqref{eq:Fk1_def} of the sets $F_{k+1}$, we have that $F_{k+1} \setminus \overline{\Omega} = F_k \setminus \overline{\Omega}$ for all $k$, because
\begin{equation*}
    \left(\bigcup_{n,j} G_{n,j} \setminus \bigcup_{n,j,l} G_{n,j,l}\right) \setminus \left(\bigcup_{m,j} H_{m,j} \setminus \bigcup_{m,j,l} H_{m,j,l}\right) \subset \bar{\Omega}.
\end{equation*}

Hence, (1) holds. By the upper bounds on the lengths of the curves in \eqref{eq:lengthbound1} and \eqref{eq:lengthbound2}, we have that
\[
F_{k+1}\setminus B(E,2Cr) = F_k \setminus B(E,2Cr)
\]
for all $k$. Hence, $\tilde{F}\setminus B(E,2Cr) = F \setminus B(E,2Cr)$. This implies that $\tilde{F} \Delta F \subset B(E,2Cr)\setminus E$. By the choice \eqref{eq:r_choice} of $r$ the condition (4) holds.

In order to verify (2) and (3), we first observe that 
for every $k$
\begin{equation}\label{eq:changeinF_k}
\begin{split}
\partial^M F_{k+1} \Delta \partial^M F_{k} \subset & \bigcup_{n,j} (f_n^+(I_{n,j}) \cup \gamma_{n,j}) \cup \bigcup_{n,j,l} (f_n^+([a_{n,j,l},b_{n,j,l}]) \cup \gamma_{n,j,l})\\
& \cup \bigcup_{m,j} (f_m^-(I_{m,j}) \cup \gamma_{m,j}) \cup \bigcup_{m,j,l} (f_m^-([a_{m,j,l},b_{m,j,l}]) \cup \gamma_{m,j,l}).
\end{split}
\end{equation}
Moreover, by the construction of $F_{k+1}$, we have
\begin{equation}\label{eq:decreasingboundary}
 \partial^M F_{k+1}\cap \partial \Omega \subset \partial^M F_{k}\cap \partial \Omega 
\end{equation}
 and, as a consequence of \eqref{eq:fourth_property} and the definitions of $G_{n,j},G_{n,j,l},H_{n,j}$, and $H_{n,j,l}$,
\begin{equation}\label{eq:zeroboundary}
 \mathcal H^1(\overline{B}_{\sfd_{\Omega}}(x_{k+1},r)\cap \partial^M F_{k+1}\cap \partial \Omega) =0.
\end{equation}
Combining \eqref{eq:decreasingboundary} and \eqref{eq:zeroboundary} with the fact that $\{\overline{B}_{\sfd_{\Omega}}(x_{k},r)\}_k$ cover $\partial \Omega$, we obtain (3).

We then compute
\begin{equation}
\label{eq:estimate_boundaries_with_quasiconvexity}
\begin{aligned}
    \mathcal{H}^1((f_n^+(I_{n,j}) \cup \gamma_{n,j})\cap \overline{\Omega}) &\le \ell(\gamma_{n,j})+ \mathcal{H}^1(f_n^+(I_{n,j})\cap \overline{\Omega})\\
    & \stackrel{\eqref{eq:lengthbound1}}{\le} C \|f_n^+(a_{n,j}) - f_n^+(b_{n,j})\|
     + \mathcal{H}^1(f_n^+(I_{n,j}))\\
     & \le 2 C \mathcal{H}^1(f_n^+(I_{n,j}).\\
\end{aligned}
\end{equation}
This gives, using \eqref{eq:third_property}
\begin{equation}
\label{eq:estimate_symmetric_difference_comp1}
    \mathcal{H}^1((f_n^+(I_{n,j}) \cup \gamma_{n,j})\cap \overline{\Omega}) \le 4 C \mathcal H^1(f_n^+(I_{n,j})\cap \partial \Omega).
\end{equation}

Similarly, we can repeat the computation in \eqref{eq:estimate_boundaries_with_quasiconvexity} (using \eqref{eq:lengthbound2} instead of \eqref{eq:lengthbound1})
and we have that
\begin{equation*}
    \mathcal{H}^1((f_n^+(I_{n,j,l}) \cup \gamma_{n,j,l})\cap \overline{\Omega}) \le 2 C \mathcal H^1(f_n^+(I_{n,j,l})).
\end{equation*}
By summing up over $l$, we have that
\begin{equation}
\label{eq:estimate_symmetric_difference_comp2}
\begin{split}
    \sum_{l} \mathcal{H}^1((f_n^+(I_{n,j,l}) \cup \gamma_{n,j,l})\cap \overline{\Omega}) & \le 2 C \sum_l \mathcal H^1(f_n^+(I_{n,j,l})) \le 2 C \mathcal H^1(f_n^+(I_{n,j})) \\
    & \stackrel{\eqref{eq:third_property}}{\le} 4C \mathcal{H}^1(f_n^+(I_{n,j}) \cap \partial \Omega).
\end{split}
\end{equation}

We now estimate
\begin{equation}\label{eq:lengthbound}
\begin{aligned}
 \mathcal H^1(\partial^M F_{k+1} \cap \overline{\Omega}) & \le \mathcal H^1(\partial^M F_{k} \cap \overline{\Omega}) + \mathcal H^1( (\partial^M F_{k} \Delta \partial^M F_{k+1}) \cap \overline{\Omega}).\\
 \end{aligned}
 \end{equation}
The second term above can be bounded by using \eqref{eq:changeinF_k} to get
\begin{equation*}
    \begin{aligned}
    \mathcal{H}^1((\partial^M F_{k+1} \Delta \partial^M F_{k}) \cap \overline{\Omega}) & \le  \sum_{n,j} \mathcal{H}^1((f_n^+(I_{n,j}) \cup \gamma_{n,j})\cap \overline{\Omega})\\
    & \quad + \sum_{n,j,l} \mathcal{H}^1((f_n^+([a_{n,j,l},b_{n,j,l}]) \cup \gamma_{n,j,l})\cap \overline{\Omega})\\
    & \quad + \sum_{m,j} \mathcal{H}^1((f_m^-(I_{m,j}) \cup \gamma_{m,j})\cap \overline{\Omega})\\
    & \quad + \sum_{m,j,l} \mathcal{H}^1((f_m^-([a_{m,j,l},b_{m,j,l}]) \cup \gamma_{m,j,l})\cap \overline{\Omega})\\
    &\le 16 C \mathcal{H}^1(\overline{B}_{\sfd_{\Omega}}(x_{k+1},r) \cap \partial \Omega \cap \partial^M F_k),
    \end{aligned}
\end{equation*}
where in the last line we used \eqref{eq:fourth_property}, \eqref{eq:estimate_symmetric_difference_comp1} and \eqref{eq:estimate_symmetric_difference_comp2}.

This gives
\begin{equation}\label{eq:lengthbound_final}
\begin{aligned}
 \mathcal H^1(\partial^M F_{k+1} \cap \overline{\Omega}) & \le \mathcal H^1(\partial^M F_{k} \cap \overline{\Omega}) + 16 C \mathcal{H}^1(\overline{B}_{\sfd_{\Omega}}(x_{k+1},r) \cap \partial \Omega \cap \partial^M F_k)\\
 &= \mathcal H^1(\partial^M F_{k} \cap \overline{\Omega}) + 16 C \mathcal{H}^1(\overline{B}_{\sfd_{\Omega}}(x_{k+1},r)\setminus \cup_{i=1}^{k} \overline{B}_{\sfd_{\Omega}}(x_{i},r) \cap \partial \Omega \cap \partial^M F_k). \\
 \end{aligned}
 \end{equation}
In the last equality, we used that 
\begin{equation*}
    \partial^M F_k \cap \partial \Omega \subset \bigcup_{j={k+1}}^N \overline{B}_{\sfd_{\Omega}}(x_{j},r) \setminus \bigcup_{i=1}^k \overline{B}_{\sfd_{\Omega}}(x_{i},r).
\end{equation*}
The iteration of \eqref{eq:lengthbound_final} then gives
\[
    \mathcal{H}^1 (\partial^M \tilde{F}\cap \overline{\Omega})
    \le \mathcal{H}^1 (\partial^M F\cap \overline{\Omega}) + \sum_{k=0}^{N-1} 16 C \mathcal{H}^1\left(\overline{B}_{\sfd_{\Omega}}(x_{k+1},r)\setminus \bigcup_{i=1}^{k} \overline{B}_{\sfd_{\Omega}}(x_{i},r) \cap \partial \Omega \cap \partial^M F_k\right).
\]
Using \eqref{eq:decreasingboundary}, we finally have
\[
 \mathcal{H}^1 (\partial^M \tilde{F}\cap \overline{\Omega})
 \le \mathcal{H}^1 (\partial^M F\cap \overline{\Omega}) + 16 C\mathcal H^1(\partial^M F\cap \partial \Omega)
\]
and thus (4) is proven.
\end{proof}

For the next proposition we recall a Lusin-type lemma on uniform convergence for monotone measurable functions.
\begin{lemma}
\label{lemma:lusin_unif_conv}
Let $g_n \colon [0,1] \to \mathbb{R}$ be an increasing (or decreasing) sequence of measurable functions pointwise converging to $g \colon [0,1] \to \mathbb{R}$. For every $\varepsilon>0$, there exists a compact set $K \subset [0,1]$ such that $\mathcal L^1([0,1] \setminus K) \le \varepsilon$ for which $g_n \to g$ uniformly on $K$.
\end{lemma}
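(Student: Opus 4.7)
The plan is to derive this from two standard ingredients: the classical Egorov theorem (adapted here to the particularly clean monotone setting) followed by inner regularity of Lebesgue measure on $[0,1]$. Since the sequence is monotone, one can avoid invoking Egorov as a black box and instead give a short direct argument.

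First, I will assume without loss of generality that the sequence $(g_n)$ is increasing (the decreasing case follows by replacing $g_n$ with $-g_n$). Fix $\varepsilon>0$. For every $k,n\in\mathbb N$ I set
\[
E_{n,k}:=\{x\in [0,1]:\, g(x)-g_n(x)>1/k\}.
\]
By monotonicity in $n$, the sets $E_{n,k}$ are decreasing in $n$ with $\bigcap_n E_{n,k}=\emptyset$ because $g_n(x)\to g(x)$ pointwise. Since $\mathcal L^1$ is finite on $[0,1]$, continuity of measure gives $\mathcal L^1(E_{n,k})\to 0$ as $n\to\infty$. Therefore for each $k$ I can pick $n_k$ with $\mathcal L^1(E_{n_k,k})<\varepsilon/2^{k+1}$, and I set $E:=\bigcup_k E_{n_k,k}$, so that $\mathcal L^1(E)<\varepsilon/2$.

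Next, I claim uniform convergence on $[0,1]\setminus E$. Indeed, for $x\in [0,1]\setminus E$ and any $k$, one has $g(x)-g_{n_k}(x)\leq 1/k$ by the definition of $E_{n_k,k}$; and, by monotonicity, for every $n\geq n_k$ one also has $0\leq g(x)-g_n(x)\leq g(x)-g_{n_k}(x)\leq 1/k$. This gives the uniform estimate on $[0,1]\setminus E$.

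Finally, since the functions $g_n$ and $g$ are measurable, the set $E$ is Lebesgue-measurable. By inner regularity of Lebesgue measure, I can choose a compact set $K\subset [0,1]\setminus E$ with $\mathcal L^1\bigl(([0,1]\setminus E)\setminus K\bigr)<\varepsilon/2$. Then $\mathcal L^1([0,1]\setminus K)<\varepsilon$, and $g_n\to g$ uniformly on $K$ since $K\subset [0,1]\setminus E$. No step here is a real obstacle: the only mild subtlety is ensuring monotonicity is used correctly so that uniform control at the single index $n_k$ propagates to all $n\geq n_k$, which is automatic from $g_n\uparrow g$.
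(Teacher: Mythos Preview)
Your proof is correct; it is essentially the standard Egorov argument, streamlined by exploiting monotonicity so that control at the single index $n_k$ automatically propagates to all $n\ge n_k$. The paper itself does not supply a proof of this lemma: it is stated as a recalled auxiliary fact (introduced with ``we recall a Lusin-type lemma'') and used as a black box in the proof of Proposition~\ref{prop:kicking_for_BV}. So there is no paper proof to compare against, and your argument fills the gap cleanly.
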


\begin{proposition}\label{prop:kicking_for_BV}
Let $E \subset \R^2$ be a compact ${BV}$-extension set. Let $\{ \Omega_i \}_{i\in I}$ be the connected components of $\R^2 \setminus E$. Then for $u \in W^{1,1}(E)$ there exists $\bar{u} \in {BV}(\R^2)$ such that
\begin{enumerate}
\item $\lvert D \bar{u} \rvert (\partial \Omega_i) = 0$ for all $i \in I$,
\item $\lvert D \bar{u} \rvert (\Omega_i \cap \cdot) \ll \mathcal{L}^2 \restr{\Omega_i}$ for all $i \in I$,
\item $\lVert \bar{u} \rVert_{{BV}(\R^2)} \leq c\lVert u \rVert_{{BV}(E)}$,
\item $\bar u = u$ on $E$. 
\end{enumerate}
\end{proposition}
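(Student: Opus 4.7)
The plan is to produce $\bar u$ in three stages: extend $u$ to a $BV$-function on $\R^2$, use Lemma \ref{lem:kicking} together with the coarea formula to annihilate the boundary variation on each $\partial\Omega_i$, and finally apply the smoothing operator of Proposition \ref{prop:smoothing} inside each $\Omega_i$ to obtain property (2). First I would set $\tilde u := Tu \in BV(\R^2)$, where $T$ is the $BV$-extension operator for $E$; this gives $\|\tilde u\|_{BV(\R^2)} \le C\|u\|_{BV(E)}$ and $\tilde u = u$ on $E$.

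For the second stage I would invoke the coarea formula $|D\tilde u|(A) = \int_\R \Pe(F_t, A)\,dt$ for every Borel $A \subset \R^2$, with $F_t := \{\tilde u > t\}$. For a.e.\ $t$ each $F_t$ has finite perimeter, so I would apply Lemma \ref{lem:kicking} iteratively over the components $\Omega_i$ (finite in number in view of the intended application to Theorem \ref{thm:main}) to obtain a modified set $\tilde F_t$ satisfying $\tilde F_t \setminus \bigcup_i \overline{\Omega_i} = F_t \setminus \bigcup_i \overline{\Omega_i}$, $\Pe(\tilde F_t, \R^2) \le C \Pe(F_t, \R^2)$, $\Pe(\tilde F_t, \partial \Omega_i) = 0$ for every $i$, and $\mathcal L^2(\tilde F_t \Delta F_t)$ arbitrarily small. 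I would then define
\[
\hat u(x) := \sup\bigl\{t \in \R : x \in \tilde F_t\bigr\}.
\]
Provided the family $\{\tilde F_t\}_{t \in \R}$ can be chosen essentially monotone and measurable in $t$, the equality $\{\hat u > t\} = \tilde F_t$ holds up to $\mathcal L^2$-negligible sets, and the coarea formula produces $\hat u \in BV(\R^2)$ with $\|\hat u\|_{BV(\R^2)} \le C'\|u\|_{BV(E)}$, $\hat u = u$ $\mathcal L^2$-a.e.\ on $E$, and $|D\hat u|(\partial \Omega_i) = 0$ for every $i$, which is property (1).

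In the third stage I would smooth $\hat u$ in each $\Omega_i$: choose $\varepsilon_i > 0$ with $\sum_i \varepsilon_i$ controlled by $\|u\|_{BV(E)}$, set $v_i := T_{\varepsilon_i}(\hat u\restr{\Omega_i}) \in {\rm Lip}_{\rm loc}(\Omega_i)$ from Proposition \ref{prop:smoothing}, and define $\bar u := \hat u$ on $E$ and $\bar u := v_i$ on each $\Omega_i$. Proposition \ref{prop:smoothing} ensures that each $v_i - \hat u\restr{\Omega_i}$, extended by zero, belongs to $BV(\R^2)$ with zero variation on $\partial\Omega_i$, so (1) is preserved. The local Lipschitz regularity of $v_i$ gives $|D\bar u|(\Omega_i \cap \cdot) \ll \mathcal L^2\restr{\Omega_i}$, which is (2). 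The bound (3) follows by adding the norm bounds from Proposition \ref{prop:smoothing} to the bound from the previous stage. Property (4) holds by construction.

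The main obstacle is the measurable-selection and monotonicity question in the second stage: Lemma \ref{lem:kicking} is formulated for a single set of finite perimeter, whereas here the whole family $\{F_t\}_{t \in \R}$ must be modified simultaneously in a way that preserves the nesting $F_t \subset F_s$ for $s < t$ after kicking. My intended way to overcome this is to fix, for each $\Omega_i$ and a given small radius $r > 0$, a common finite cover $\{\overline{B}_{\sfd_{\Omega_i}}(x_k, r)\}_{k=1}^{N_i}$ of $\partial \Omega_i$ provided by Lemma \ref{lma:quasiconvexity_full}(3), and to carry out the kicking in each ball by a purely geometric prescription that depends only on $\partial^M F_t \cap \overline{B}_{\sfd_{\Omega_i}}(x_k, r)$. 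The technical heart of the proof is to verify that such a prescription, applied uniformly in $t$, produces a nested family $\{\tilde F_t\}$ and hence a measurable function $\hat u$; once this is achieved, the rest of the argument is a routine combination of the coarea formula, the bounds in Lemma \ref{lem:kicking}, and the already established properties of the smoothing operator.
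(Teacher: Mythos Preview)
Your three-stage outline (extend, kick level sets via coarea, smooth inside each $\Omega_i$) matches the paper's strategy, and your third stage is exactly how the paper obtains property (2). The genuine gap is the second stage, precisely at the point you flag yourself.

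Defining $\hat u(x):=\sup\{t:x\in\tilde F_t\}$ and claiming $\{\hat u>t\}=\tilde F_t$ requires the family $\{\tilde F_t\}$ to be nested. Your proposed fix---a ``purely geometric prescription'' carried out in a common finite cover of $\partial\Omega_i$---is not going to deliver this. The construction in Lemma \ref{lem:kicking} passes through the Jordan decomposition of $\partial^M F_t$ (Theorem \ref{thm:planardecomposition}), selects arcs $I_{n,j}$ depending on where $\partial^M F_t$ meets $\partial\Omega$, and then replaces those arcs by curves in $\Omega$ produced by Lemma \ref{lma:quasi_int}. None of these steps respects the inclusion $F_t\supset F_s$ for $t<s$: the Jordan curves change discontinuously in $t$, the arc selection depends on which subarcs carry at least half their length on $\partial\Omega$, and the replacement curves are only determined up to the length bound. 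There is no reasonable way to make the output monotone in $t$.

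The paper avoids this entirely. It does \emph{not} rebuild a function from nested level sets. Instead, working one component $\Omega_i^1$ at a time, it takes a dyadic partition $\{I_{j,n}\}$ of $[0,1]$, uses a Lusin-type lemma (Lemma \ref{lemma:lusin_unif_conv}) to find compact sets $K_m\subset[0,1]$ on which $t\mapsto\Pe(\tilde A_t,B(\partial\Omega_i^1,2^{-k}))$ tends to $0$ uniformly, selects representative levels $t^n_{j,m}\in I_{j,n}\cap(K_m\setminus K_{m-1})$ satisfying averaged perimeter bounds, and forms the \emph{simple functions}
\[
u_n^i=\sum_{m}\sum_{j\in S_{m,n}}|I_{j,n}\cap\tilde K_m|\,\chi_{\tilde A_{t^n_{j,m}}}.
\]
These are linear combinations of characteristic functions, not functions with nested superlevel sets. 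The $BV$ bounds and the key estimate $|Du_n^i|(B(\partial\Omega_i^1,2^{-k}))\le 2^{-m_0}+Cw(2^{-m_0})$ are obtained by summing the perimeter bounds from Lemma \ref{lem:kicking}; then a weak $L^2$ limit plus Mazur's lemma and lower semicontinuity of the total variation on open sets produce $u^i$ with $|Du^i|(\partial\Omega_i^1)=0$. This is the missing idea in your proposal.

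A second point you pass over: the paper first replaces $E$ by $\overline{E^1}$ and works with the components $\Omega_i^1$ of $\R^2\setminus\overline{E^1}$, because Proposition \ref{lma:small_intersect} guarantees $\mathcal H^1(\partial\Omega_i^1\cap\partial\Omega_j^1)=0$. This is what allows the per-component modifications to be summed into a single $\bar u$ with controlled $BV$ norm (the $\|\tilde u\|_{BV(\overline{\Omega_j^1})}$ are essentially disjointly supported). For the original components $\Omega_i$ of $\R^2\setminus E$ this $\mathcal H^1$-smallness can fail, so working directly with them would create trouble both for the norm bound and for the gluing.
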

\begin{proof}
 

Let us recall that also the set \(\overline {E^1}\) is a BV-extension set by Proposition \ref{prop_closedrepr}. We denote by \(\{\Omega^1_i\}_{i\in J}\) the family of connected components of \(\R^2\setminus \overline{E^1}\), where $J \subset \mathbb N$.  Then by Lemma \ref{lma:small_intersect}
\begin{equation}\label{eq:acc_finite}
\mathcal{H}^1(\partial \Omega_i^1 \cap \partial \Omega_j^1) = 0.
\end{equation}

Our aim is to exploit \eqref{eq:acc_finite} and prove the following claim.

{\color{blue}\textsc{Claim}}: Given $u \in W^{1,1}(\overline{E^1})$ there exists $\bar{u} \in {BV}(\R^2)$ such that (1)--(4) in the statement of the proposition are satisfied for $\bar{u}$ and the family $\{ \Omega_j^1 \}_{j \in J}$. 

Assuming that the claim is true, we conclude the proof as follows. To avoid confusion, we relabel the properties given by the claim as (1')--(4').
Given $u \in W^{1,1}(E)$, $\overline{E^1}\subset E$ because $E$ is closed. So the restriction of $u$ to $\overline{E^1}$ belongs to $W^{1,1}(\overline{E^1})$.
By the claim, there exists $\bar{u} \in BV(\mathbb{R}^2)$ which satisfies (1')--(4'). We prove that $\bar{u}$ and the sets $\{\Omega_i\}_{i \in I}$ satisfy the conclusion of the proposition. 

First of all, from $\mathcal{L}^2(\overline{E^1}\Delta E)=0$ and the property (4') we have $\bar{u}=u$ $\mathcal{L}^2$-a.e. on $E$, and so (4) holds.
We get (3) from the estimate
\[
\|\bar{u}\|_{BV(\mathbb{R}^2)} \le C \| u \|_{BV(\overline{E^1})} \le C \| u \|_{BV(E)},
\]
where the first inequality follows by (3') and the second one because the restriction from $BV(E)$ to $BV(\overline{E^1})$ is $1$-Lipschitz.

It remains to prove (1) and (2). We notice that for every \(i\in I\) there exists \(j(i)\in J\) such that \(\overline{\Omega_i}\subset \overline{\Omega^1_{j(i)}}\). Since $|D \bar{u}|(\Omega_i \cap \cdot) \le |D \bar{u}|(\Omega_{j(i)} \cap \cdot)$, this implies property (2). Property (2') gives that $|D \bar{u}|(\Omega_{j(i)}^1 \cap \partial \Omega_i)=0$, because $\mathcal{L}^2(\partial \Omega_i)=0$. This fact together with (1') proves (1).

\smallskip

We now prove the claim. Take $u \in W^{1,1}(\overline{E^1}) \subset BV(\overline{E^1})$.
We can assume without loss of generality that $0 \le u \le 1$.
 We extend $u$ to a function $\tilde{u} \in BV(\R^2)$ with $\|\tilde u\|_{BV(\R^2) }\le C\|u\|_{BV(\overline{E^1})}$.
Without loss of generality, we may assume that $\tilde u$ has compact support.


\textsc{{\color{blue}Step 1}} (Construction of an approximating sequence $\{u_n^i\}_n$ of the modification of $\tilde{u}$ in a fixed connected component $\Omega_i^1$):
Fix $i\in J$. Since $\tilde{u} \in BV(\mathbb{R}^2)$, by coarea formula we have
\[
 \int_{0}^{1} {\rm P}(\{\tilde u > t\})\,\d t= |D\tilde{u}|(\R^2) <\infty.
\]
Denote the superlevel set by $A_t = \{\tilde u > t\}$.
Thus, there exists $N \subset [0,1]$ such that $\mathcal{L}^1(N)=0$ and for every $t \in [0,1]\setminus N$ the set $A_t$ has finite perimeter. For every $t \in [0,1]\setminus N$, we can define $\widetilde{A}_t$ using Lemma $\ref{lem:kicking}$ with the choices $\Omega=\Omega_i^1$ and  $\varepsilon=2^{-i}\|\bar{u}\|_{L^1(\mathbb{R}^2)}$ so that 
\begin{enumerate}
\item[(i)] $\widetilde{A}_{t} \setminus \overline{\Omega^1_i} = A_{t} \setminus \overline{\Omega_i^1}$,
\item[(ii)] $\mathcal{H}^1 (\partial^M \widetilde{A}_{t}\cap \overline{\Omega^1_i}) \leq C\mathcal{H}^1(\partial^M A_{t}\cap \overline{\Omega^1_i})$,
\item[(iii)] $\mathcal{H}^1(\partial^M \widetilde{A}_{t} \cap \partial \Omega_i^1) = 0$,
\item[(iv)] $\mathcal{L}^2(\widetilde{A}_{t} \Delta A_{t})\leq  2^{-i}\|\tilde u\|_{L^1(\mathbb R^2)}$.
\end{enumerate}
For every $t \in N$ we set $\tilde{A}_t:= \emptyset$.

By item iii), we are in position to apply Lemma $\ref{lemma:lusin_unif_conv}$ for every $m \in \mathbb N$
with $g_k(t) := \Pe(\tilde{A}_t,B(\partial \Omega_i^1,2^{-k}))$, $g(t) := 0$, and $\varepsilon = 2^{-m}$. Let us denote by $K_m\subset [0,1]$ the set given by the lemma. Without loss of generality, we may assume that $K_m \subset K_{m+1}$ and $K_m \cap N =\emptyset$ for every $m$.
Let us also write, for every $m \in \mathbb N$,
$\tilde K_{m} = K_m \setminus K_{m-1}$.
We now define $I_{j,n} =  [(j-1)2^{-n},j2^{-n}]$ for every $j=1,\dots,2^{n}$ and $n \in \mathbb{N}$.


Next we define 
\[
S_{m,n}:=\{ j : |I_{j,n} \cap \tilde K_m| > 0 \} \subset \{1,\dots,2^{n}\}
\]
and for every $j \in S_{m,n}$ select $t_{j,m}^n \in I_{j,n} \cap \tilde K_{m}$ such that
\begin{equation}\label{eq:choice1}
 \Pe(A_{t_{j,m}^n}) \le \frac{{5}}{|I_{j,n} \cap \tilde K_{m}|}\int_{I_{j,n} \cap \tilde K_{m}} 
 \Pe(A_r)\,\d r
\end{equation}
and
\begin{equation}\label{eq:choice2}
 \Pe(A_{t_{j,m}^n}, \overline{\Omega^1_i}) \le \frac{{5}}{|I_{j,n} \cap \tilde K_{m}|}\int_{I_{j,n} \cap \tilde K_{m}} 
 \Pe(A_r, \overline{\Omega^1_i})\,\d r.
\end{equation}

Keeping in mind that all the above defined quantities depend on $i$, we define 
\[
u^i_n:= \sum_{m=1}^{\infty} \sum_{j \in S_{m,n}}|I_{j,n} \cap \tilde K_{m}|\chi_{\tilde{A}_{t_{j,m}^n}}.
\]
By (i) and (ii)  we have that 
\begin{equation}
    \label{eq:boundedness_extension_superlevel}
    {\rm P}({\tilde{A}_{t_{j,m}^n}}) \le C \Pe({{A}_{t_{j,m}^n}}).
\end{equation}
Thus,
\begin{equation}\label{eq:bound1}
\begin{split}
    |Du^i_n|(\R^2) & \le  \sum_{m=1}^{\infty} \sum_{j \in S_{m,n}}|I_{j,n} \cap \tilde K_{m}|\Pe({\tilde{A}_{t_{j,m}^n}})\\ 
    & \le \sum_{m=1}^{\infty} \sum_{j \in S_{m,n}}C|I_{j,n} \cap \tilde K_{m}|\Pe({{A}_{t_{j,m}^n}})\\
    &\stackrel{\eqref{eq:choice1}}{\le} \sum_{m=1}^{\infty} \sum_{j \in S_{m,n}}3 C \int_{I_{j,n} \cap \tilde K_{m}}\Pe({{A}_{r}})\,\d r
 = 3C |D\tilde u|(\R^2),
  \end{split}
\end{equation}
whereas by (ii) we get
\begin{equation}\label{eq:bound2}
\begin{split}
|D u^i_n|(\overline{\Omega^1_i}) = &
\sum_{m=1}^{\infty} \sum_{j \in S_{m,n}}|I_{j,n} \cap \tilde K_{m}|\Pe(\tilde{A}_{t_{j,m}^n},\overline{\Omega^1_i}) \\
 &\le \sum_{m=1}^{\infty} \sum_{j \in S_{m,n}}C |I_{j,n} \cap \tilde K_{m}|\Pe({A}_{t_{j,m}^n},\overline{\Omega^1_i}) \\
 & \stackrel{\eqref{eq:choice2}}{\le}  \sum_{m=1}^{\infty} \sum_{j \in S_{m,n}}3C \int_{I_{j,n} \cap \tilde K_{m}}\Pe({{A}_{r}},\overline{\Omega}_i)\,\d r
 = 3C |D\tilde u|(\overline{\Omega_i^1}).
 \end{split}
\end{equation}

\textsc{{\color{blue}Step 2}} (Limit of the approximating sequence and construction of the modification $u^i$ of $\tilde{u}$ on $\Omega_i^1$):

We claim that $\| u^i_n - u \|_{L^\infty(E)} \le 2^{-n}$.
Indeed, given $x \in \overline{E^1}$, there exists $j_0=j_0(x)$ such that $(j_0-1)2^{-n} \le u(x) \le j_0 2^{-n}$.
We observe that, given $m \in \mathbb{N}$ and $j \in S_{m,n}$, if $t_{j,m}^n \le (j_0-1)2^{-n}$, then $\chi_{\tilde{A}_{t_{j,m}^n}}(x)= \chi_{A_{t_{j,m}^n}}(x)=1$. This implies that
\begin{equation}
    u^i_n(x)\ge \sum_{m=1}^{\infty} \sum_{j \in S_{m,n}:\,t_{j,m}^n\le (j_0-1)2^{-n}}|I_{j,n} \cap \tilde K_{m}| = (j_0-1)2^{-n}.
\end{equation}
Similarly, if $t_{j,m}^n \ge j_0 2^{-n}$, then $\chi_{\tilde{A}_{t_{j,m}^n}}(x)=0$. Thus,
\begin{equation}
    u^i_n(x)\le  \sum_{m=1}^{\infty} \sum_{j \in S_{m,n}:\, t_{j,m}^n < j_0\,2^{-n}}|I_{j,n} \cap \tilde K_{m}|=j_0\, 2^{-n}.
\end{equation}
This proves the claim.

By uniform convergence of $g_k$ to $0$ on $K_m$, we know that for every $m \in \mathbb N$ there exists $\bar{k} = \bar{k}(m) \in \mathbb N$ such that $g_k(t) \le 2^{-m}$ for all $k \ge \bar{k}$ and all $t \in K_m$.
We define for all $\delta>0$
\[ w(\delta):= \sup \left\{ \int_D  
\Pe(A_s)\, \d s:\, D \subset [0,1]\text{ Borel with }\,\mathcal{L}^1(D) \le \delta \right\}.\]
We notice that $w(\delta)<\infty$ for every $\delta >0$, since by coarea formula $w(\delta)\le|D \tilde{u}|(\mathbb{R}^2)<\infty$.
We compute, for any $m_0,n \in \mathbb N$ and for $k \ge \bar{k}(m_0)$
\begin{equation}
\label{eq:total_variation_tub_neigh}
\begin{aligned}
|D u^i_n|& (B(\partial \Omega_i, 2^{-k})) \le \sum_{m=1}^{\infty} \sum_{j \in S_{m,n}}|I_{j,n} \cap \tilde K_{m}|\Pe(\tilde{A}_{t_{j,m}^n},B(\partial \Omega_i, 2^{-k}))\\
& \le \sum_{m=1}^{m_0} \sum_{j \in S_{m,n}}|I_{j,n} \cap \tilde K_{m}|\Pe(\tilde{A}_{t_{j,m}^n},B(\partial \Omega_i, 2^{-k}))
+
\sum_{m=m_0+1}^{\infty} \sum_{j \in S_{m,n}}|I_{j,n} \cap \tilde K_{m}|\Pe(\tilde{A}_{t_{j,m}^n})\\
& \stackrel{\eqref{eq:boundedness_extension_superlevel}}{\le} \sum_{m=1}^{m_0} \sum_{j \in S_{m,n}}|I_{j,n} \cap \tilde K_{m}|2^{-m_0}
+ \sum_{m=m_0+1}^{\infty} \sum_{j \in S_{m,n}}|I_{j,n} \cap \tilde K_{m}|C\Pe(A_{t_{j,m}^n})\\
& \stackrel{\eqref{eq:choice1}}{\le} \sum_{m=1}^{m_0} \sum_{j \in S_{m,n}}|I_{j,n} \cap \tilde K_{m}|2^{-m_0}
+
\sum_{m=m_0+1}^{\infty} \sum_{j \in S_{m,n}}C\int_{I_{j,n} \cap \tilde K_{m}} 
\Pe(A_r)\,\d r\\
& \le 2^{-m_0} + Cw(2^{-m_0}).
\end{aligned}
\end{equation}
In the third inequality, we used that $t_{j,m}^n \in I_{j,n} \cap \tilde{K}_m \subset K_{m_0}$ and in the last inequality we used that $K_{m_0} \subset [0,1]$ since $0 \le u \le 1$.

Since $0\le u^i_n \le 1$ for every $n$, 
we have
\[
\sup_n \| u^i_n \|_{L^2(\mu)}< \infty.
\]
where $\mu$ is a nonnegative finite Borel measure such that $\mathcal{L}^2 \ll \mu \ll \mathcal{L}^2$.
Hence, there exists a subsequence $u^i_{n_k} \rightharpoonup u^i$ in $L^2(\mu)$. We apply Mazur's lemma to this subsequence to obtain a sequence $v^i_m = \sum_{j=m}^{N_m} \lambda_{m,j}u^i_{n_{j}}$ with $\sum_{j=m}^{N_m} \lambda_{m,j} =1$ such that $v^i_m \to u^i$ in $L^2(\mu)$ as $m \to \infty$. Thus, $v_m^i \to u^i$ in $L^1_{\rm loc}(\mathbb{R}^2)$.


\textsc{{\color{blue}Step 3}} (BV estimates on $u^i$): By item (iv), we have that  
\begin{equation}
\label{eq:bounding_uni}
    \begin{aligned}
    \|u^i_n\|_{L^1(\mathbb{R}^2)}&\le  \sum_{m=1}^{\infty} \sum_{j \in S_{m,n}}|I_{j,n} \cap \tilde K_{m}| \mathcal{L}^2(\tilde{A}_{t_{j,m}^n})\\
    &\le \sum_{m=1}^{\infty} \sum_{j \in S_{m,n}}|I_{j,n} \cap \tilde K_{m}| \left(\mathcal{L}^2(A_{(j-1)2^{-n}}) + 2^{-i} \|\tilde{u}\|_{L^1(\mathbb{R}^2)}\right)\\
    & = \sum_{j=1}^{2^{n}} 2^{-n}\left(\mathcal{L}^2(A_{(j-1)2^{-n}}) + 2^{-i} \|\tilde{u}\|_{L^1(\mathbb{R}^2)}\right)\\
    & \le \sum_{j=1}^{2^{n}} 2^{-n}\left(\mathcal{L}^2(A_{j2^{-n}}) + 2^{-i} \|\tilde{u}\|_{L^1(\mathbb{R}^2)}\right) + 2^{-n}\mathcal{L}^2(A_{0})\\
    & \le  \int_0^1 \mathcal{L}^2(A_r)\,\d r + 2^{-i} \|\tilde{u}\|_{L^1(\mathbb{R}^2)} + 2^{-n}\mathcal{L}^2(A_{0})\\
    & = (1 + 2^{-i}) \|\tilde{u}\|_{L^1(\mathbb{R}^2)} + 2^{-n}\mathcal{L}^2(A_{0}),
    \end{aligned}
\end{equation}
where the last inequality follows from Cavalieri's formula. Since $\tilde u$ has compact support, $\mathcal{L}^2(A_{0}) < \infty$ and thus for large enough $n$, 
\eqref{eq:bounding_uni} gives
\begin{equation}
\label{eq:bounding_uni2}
\|u^i_n\|_{L^1(\mathbb{R}^2)} \le(1+ 2^{-i+1}) \|\tilde{u}\|_{L^1(\mathbb{R}^2)}
\end{equation}
By \eqref{eq:bound1} and \eqref{eq:bounding_uni2}, for large enough $n$ we have the crude estimate
\[
\|u^i_n\|_{BV(\R^2)} \leq C \|\tilde u\|_{BV(\R^2)}
\]
and thus also for $v_n^i$.
This together with the lower semicontinuity of total variation leads to
\[
\|u^i\|_{BV(\R^2)} \leq C \|\tilde u\|_{BV(\R^2)}. 
\]

For any $n$, $m$ it holds from \eqref{eq:total_variation_tub_neigh} that for $k \ge \bar{k}(m)$ 
\[
|D v^i_n|(B(\partial \Omega_i^1, 2^{-k}))\le 2^{-m} +  C w(2^{-m}).
\]
Hence, we have
\[ |D u^i|(\partial \Omega_i^1) \le |D u^i|(B(\partial \Omega_i^1, 2^{-k})) \le 2^{-m} + C w(2^{-m}),\]
where in the last inequality we used the lower semicontinuity of total variation on open sets with respect to $L^1_{\rm loc}$ convergence applied to the open set $B(\partial \Omega_i^1, 2^{-k})$. By taking the limit as $m \to +\infty$, we get 
\begin{equation}\label{eq:boundaryuizero}
|D u^i|(\partial \Omega_i^1) = 0.
\end{equation}
From (iv), it follows that  $\limi_{n \to \infty} \|\tilde{u}-u_n^i\|_{L^1(\mathbb{R}^2)} \le 2^{-i} \| \tilde{u} \|_{L^1(\mathbb{R}^2)}$. Up to passing to $v_n^i$ with a convex combination and lower semicontinuity with respect with $L^1_{\rm loc}$ convergence, we get

\begin{equation}\label{eq:L1diff}
\|u^i - \tilde u\|_{L^1(\mathbb{R}^2)} \le 2^{-i}\|\tilde u\|_{L^1(\mathbb{R}^2)}.
\end{equation}
Moreover, by \eqref{eq:bound2} and the lower semicontinuity of total variation on the open set $\Omega_i^1$, together with \eqref{eq:boundaryuizero}, we have
\begin{equation}\label{eq:BVincrease}
 | D u^i|(\overline{\Omega^1_i}) \le C |D \tilde u|(\overline{\Omega^1_i}).
\end{equation}




\textsc{{\color{blue}Step 4}} (Construction of the function $\bar{u}$):
We can repeat the construction for every $i \in J$. Now, we define 
\[
\bar{u}(x) := 
\begin{cases}
u(x),& \text{if } x\in \overline{E_1} \\
{u}^i(x),& \text{if } x\in \Omega_i^1\text{ for some }i \in J.
\end{cases}
\]
Let us check that
\begin{enumerate}
\item[(a)] $\lvert D\bar{u} \rvert (\partial \Omega_i^1) = 0$ for each $i\in J$;
\item[(b)] $\lVert \bar{u} \rVert_{BV(\R^2)} \leq C \lVert u \rVert_{BV(\overline{E^1})}$.
\end{enumerate}
Towards this, we define ${v}^j := \left({u}^j- \tilde{u} \right) \chi_{\Omega_j^1}$ and
\begin{equation}\label{eq:barudecomposition}
\bar{u} = \tilde{u} + \sum_{j\in J} v^j =\tilde{u} \chi_{\mathbb{R}^2\setminus \Omega_i^1} + u^i \chi_{\Omega_i^1} + \sum_{j \neq i} v^j = u^i + \sum_{j \neq i} v^j,
\end{equation}
where in the last equality we used that, because of item (i), $\tilde{u} \chi_{\mathbb{R}^2\setminus \Omega_i^1}= u_n^i \chi_{\mathbb{R}^2\setminus \Omega_i^1}$.

Now for item (b), notice that $v^j = \left({u}^j - \tilde{u} \right)\chi_{\Omega_j^1} = {u}^j - \tilde{u}$, and thus
from \eqref{eq:BVincrease} we get
\[
|D v^j|(\R^2) \leq  |D{u}^j|(\overline{\Omega^1_j}) + |D\tilde{u}|(\overline{\Omega^1_j}) \leq C|D\tilde{u}|(\overline{\Omega^1_j})
\]
and from \eqref{eq:L1diff}
\[
 \|v^j\|_{L^1(\R^2)} =  \|u^j - \tilde u\|_{L^1(\mathbb{R}^2)} \le 2^{-j}\|\tilde u\|_{L^1(\mathbb{R}^2)}.
\]
Combining the above, we have 
\[
\lVert \bar{u} \rVert_{BV(\R^2)} \leq \lVert \tilde{u} \rVert_{BV(\R^2)} + C \sum_{j\in J} \lVert \tilde{u} \rVert_{BV(\overline{\Omega^1_j})} \leq (C+1)\lVert \tilde{u} \rVert_{BV(\R^2)} \leq C\lVert u \rVert_{BV(\overline{E^1})},
\]
where in the second inequality we used \eqref{eq:acc_finite}. 

For item (a), 
again by \eqref{eq:acc_finite} and the coarea formula, we have
\[
 \lvert D{v}^j \rvert (\partial \Omega_i^1) = 0
\]
for $j \ne i$, and by \eqref{eq:boundaryuizero}
\[
\lvert Du^i \rvert (\partial \Omega_i^1) = 0.
\]
Thus, by the decomposition after the last equality in \eqref{eq:barudecomposition},
we have (a).
In order to obtain the property (2) for the components \(\{\Omega_i^1\}_{i\in J}\), we use the smoothing operator of Proposition \ref{prop:smoothing} for the open set $\Omega = \R^2 \setminus \overline{E^1}$.
\end{proof}

We can now conclude the proof of Theorem \ref{thm:main}. Notice that although Proposition \ref{prop:kicking_for_BV} holds also for infinitely connected closed sets, it is still open in if the conclusion of Theorem \ref{thm:main} holds in this general setting. As will be seen in Example \ref{ex:method_fails} after the proof, the approach via Proposition \ref{prop:kicking_for_BV} cannot directly give the claim in the general setting.

\begin{proof}[Proof of Theorem \ref{thm:main}]
Let $E \subset\mathbb R^2$ be a compact $BV$-extension set with $\mathbb R^2\setminus E$ having finitely many connected components, say $m \in \mathbb{N}$. Our aim is to show that $E$ is a $W^{1,1}$-extension set. To this end, let $u \in W^{1,1}(E)$. Since $\mathbb R^2 \setminus E$ has finitely many components $\Omega_i$, we have
\[
\partial E = \bigcup_{i=1}^m\partial \Omega_i.
\]
Thus by Proposition \ref{prop:kicking_for_BV}, there exists $\tilde u \in BV(\mathbb R^2)$ with $|D\tilde u|(\partial E) = 0$, $\tilde u\restr{\mathbb R^2\setminus E} \in W^{1,1}(\mathbb R^2 \setminus E)$, and $\|\tilde u\|_{BV(\mathbb R^2)} \le C \|u\|_{BV(E)}$, where $C$ is independent of $u$. Thus, by Lemma \ref{lemma:fromlocal_to_global_W11}, we have $\tilde u \in W^{1,1}(\mathbb R^2)$ with
\[
\|\tilde u\|_{W^{1,1}(\mathbb R^2)} =\|\tilde u\|_{BV(\mathbb R^2)} \le C \|u\|_{BV(E)}
\]
proving the claim.
\end{proof}

Let us end with an example showing that the conclusion of Proposition \ref{prop:kicking_for_BV} is by itself not enough 
to show that $\tilde u \in W^{1,1}(\mathbb R^2)$.

\begin{example}\label{ex:method_fails}
 Let us give an example of a compact set $E \subset \mathbb R^2$ and a function $u\in BV(\mathbb R^2)\setminus W^{1,1}(\mathbb R^2)$ so that 
 $u\restr{E} \in W^{1,1}(E)$, and $|Du|(\partial \Omega_i) = 0$  and $u\restr{\Omega_i}\in W^{1,1}(\Omega_i)$ for all connected components $\Omega_i$ of $\mathbb R^2\setminus E$.

 Let $C \subset [0,1]$ be a fat Cantor set and denote by $\{U_i\}$ the bounded open connected components of $\mathbb R \setminus C$. Let $f \in BV(\mathbb R)\setminus W^{1,1}(\mathbb R)$ be a continuous increasing function  with $f(x) = 0$ for all $x \le 0$, $f(x) = 1$ for all $x \ge 1$, and $f$ constant on each $U_i$. (For example, one can take a Cantor staircase function constructed with a further Cantor subset of $C$ of zero Lebesgue measure.)

 Let us then define $E = [0,1]^2\cup (C \times [1,4]) \cup ([0,1]\times [4,5])$ and set 
 \[
 u(x,y) = \max(0,1-\textrm{dist}(x,[0,1]))\max(0,1-\textrm{dist}(y,[2,3]))f(x).
 \]

 Let us then check the claimed properties for $E$ and $u$. Since $v \in BV(\mathbb R^2)$, also $u \in BV(\mathbb R^2)$. On every horizontal line-segment intersecting the set $[0,1]\times(2,3)$ the function $u$ is continuous, but not absolutely continuous. Thus $u \notin W^{1,1}(\mathbb R^2)$. Since $u$ is a Lipschitz function on each vertical segment in $C\times [1,4]$ and zero in the rest of $E$, we have $u\restr{E} \in W^{1,1}(E)$. Since $u$ is Lipschitz in each connected component $\Omega_i$ of $\mathbb R^2\setminus E$, we have $u\restr{\Omega_i}\in W^{1,1}(\Omega_i)$ for all $\Omega_i$. Moreover, the boundary $\partial \Omega_i$ is rectifiable for all $\Omega_i$ and the function $u$ has only the absolutely continuous and Cantor parts, $|Du|(\partial \Omega_i) = 0$ for all $\Omega_i$.

 The point of taking a fat Cantor set $C$ in the construction above is to also have $\overline{E^1} = E$. In other words, $E$ does not have a representative with nicer properties.
\end{example}

%

\bibliographystyle{amsplain}
\bibliography{biblio.bib}

\end{document}